\newtheorem{theorem}{Theorem}[section]
\newtheorem{lemma}[theorem]{Lemma}
\newtheorem{proposition}[theorem]{Proposition}
\newtheorem{corollary}[theorem]{Corollary}
\newtheorem{conjecture}[theorem]{Conjecture}
\newtheorem{alphtheorem}{Theorem}
\theoremstyle{definition}
\newtheorem*{ack}{Acknowledgements}
\newtheorem*{con}{Conventions}
\newtheorem{remark}[theorem]{Remark}
\newtheorem{definition}[theorem]{Definition}
\numberwithin{equation}{section} \numberwithin{figure}{section}
\DeclareMathOperator{\Spec}{Spec}
\DeclareMathOperator{\an}{an}
\DeclareMathOperator{\supp}{supp}
\DeclareMathOperator{\red}{red}
\DeclarePairedDelimiter\powerseries{[ \! [}{] \! ]}
\DeclarePairedDelimiter\laurent{( \! (}{) \! )}
\newcommand*\ratmap{\mathbin{\tikz [baseline=0ex,-latex, dashed, ->] \draw [densely dashed] (0em,0.58ex) -- (1.3em,0.58ex);}}
\newcommand{\et}{\textrm{\'{e}t}}
\definecolor{orange}{rgb}{1,0.5,0}
\date{October 11th, 2024; revised on October 16th, 2025}
\title{The Weakly Special Conjecture contradicts orbifold Mordell, and hence the abc conjecture}
\author{Finn Bartsch}
\address{Finn Bartsch \\
IMAPP Radboud University Nijmegen \\
PO Box 9010, 6500GL \\
Nijmegen, The Netherlands\\}
\email{f.bartsch@math.ru.nl }
\author{Fr\'ed\'eric Campana}
\address{Fr\'ed\'eric Campana,
Institut Élie Cartan de Lorraine,
Université de Lorraine, Site de Nancy,
B.P. 70239, F-54506 Vandoeuvre-lès-Nancy Cedex,
France}
\email{frederic.campana@univ-lorraine.fr}
\author{Ariyan Javanpeykar}
\address{Ariyan Javanpeykar,
IMAPP Radboud University Nijmegen,
PO Box 9010, 6500GL,
Nijmegen, The Netherlands}
\email{ariyan.javanpeykar@ru.nl}
\author{Olivier Wittenberg}
\address{Olivier Wittenberg, 
Laboratoire Analyse, Géométrie et Applications,
Institut Galilée - Université Sorbonne Paris Nord,
99 avenue Jean-Baptiste Clément,
93430 Villetaneuse,
France}
\email{wittenberg@math.univ-paris13.fr}
\subjclass[2020]{14D06, 11G35, 14J28, 32Q45}
\keywords{Rational point, multiple fibre, K3 surface, Enriques surface, hyperbolicity}
\begin{document}

\begin{abstract}
Starting from an Enriques surface over $\mathbb{Q}(t)$ constructed by Lafon, we give the first examples of smooth projective weakly special threefolds which fibre over the projective line in Enriques surfaces (resp.\ K3 surfaces) with nowhere reduced but non-divisible fibres and general type orbifold base. We verify that these families of Enriques surfaces (resp.\ K3 surfaces) are non-isotrivial and compute their fundamental groups by studying the behaviour of local points along certain \'etale covers.
The existence of these threefolds implies that the Weakly Special Conjecture formulated in 2000 contradicts the Orbifold Mordell Conjecture, and hence the abc conjecture. 
Using these examples, we can also easily disprove several complex-analytic analogues of the Weakly Special Conjecture. Finally, these examples  show  that Enriques surfaces and K3 surfaces can have non-divisible but nowhere reduced degenerations, thereby answering a question raised in 2005.
\end{abstract}
\maketitle

\thispagestyle{empty}

 \tableofcontents
 
\section{Introduction}

This article is concerned with new examples of weakly special varieties, a class of varieties first considered
in~\cite{CTSSD}:

\begin{definition}\label{def:ws}
A smooth projective variety $X$ over a field $k$ of characteristic zero with algebraic closure~$\overline{k}$ is \emph{weakly special} if no finite \'etale cover of~$X_{\overline{k}}$ rationally dominates a positive-dimensional smooth projective variety of general type over~$\overline{k}$. 
\end{definition}

As was noted in~\cite[Remark~3.2.1]{CTSSD},
Lang's conjecture on the non-density of rational points on varieties of general type \cite{Langconj}, together with the Chevalley--Weil theorem, predicts that a smooth projective variety $X$ over a number field $K$ for which $X(K)$ is dense is weakly special. One might wonder about the converse (see \cite[Conjecture~1.2]{HarrisTschinkel}):

\begin{conjecture}[Weakly Special Conjecture]\label{conj}
Let $X$ be a smooth projective variety over a number field~$K$. If $X$ is weakly special, then there is a finite extension $L/K$ such that $X(L)$ is dense.
\end{conjecture}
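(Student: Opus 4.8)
The plan is to prove potential density for a weakly special $X$ by following the philosophy of Campana's structure theory: decompose $X$ along its natural fibrations into building blocks whose rational points are known to be potentially dense --- abelian varieties and rationally connected varieties --- and then propagate density upward through the fibrations. The weakly special hypothesis is what should guarantee that only these favourable building blocks occur.

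First I would reduce to a convenient geometric shape. By the Chevalley--Weil theorem, density of rational points on a finite \'etale cover (over a finite extension of the ground field) implies density on $X$ itself, so I am free to replace $K$ by a finite extension and $X$ by any such cover. I would then pass to the maximal rationally connected fibration $X \to Z$ and compose with the Albanese morphism $Z \to \Alb(Z)$. The weakly special hypothesis should force the image of $Z$ in its Albanese to admit, up to finite \'etale cover, no positive-dimensional general-type quotient, since any such quotient would pull back to an \'etale cover of $X$ dominating a variety of general type. Invoking the structure theory of the Albanese and Iitaka fibrations together with this constraint, the goal is to reduce to a tower $X \to Z \to A$ in which $A$ is an abelian variety and the generic fibres at each stage are rationally connected.

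Next I would dispatch the building blocks. For the abelian base $A$, potential density is the theorem of Frey--Jarden: after a finite extension $L/K$ the group $A(L)$ is Zariski dense. For the rationally connected generic fibres, potential density is elementary in the unirational case and is expected (and known in many cases) in general; I would use it as the fibrewise input. Combining these would give a dense supply of rational points both on the base and within individual fibres.

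The main obstacle is the fibration-assembly step: upgrading density on $A$ and density in the fibres to density on the total space $X$. The aim is to show that, over a Zariski-dense set of points of $A(L)$, the corresponding fibre of $X$ acquires a rational point after a single finite extension $L'/L$ that is independent of the chosen base point --- a uniform specialization in the spirit of Hilbert irreducibility. This is delicate precisely because the fibrations occurring here can have everywhere non-reduced fibres: at such a multiple fibre the local points need not patch together into density, no matter how the ground field is enlarged, and this phenomenon is carried by the orbifold base of the fibration being of general type. Crucially, this multiplicity data is invisible to the weakly special condition, which detects only the reduced geometry underlying \'etale covers. Controlling the fibre multiplicities --- equivalently, ruling out the arithmetic obstruction encoded in a general-type orbifold base --- is therefore the crux on which the entire argument turns.
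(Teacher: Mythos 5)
There is a genuine and unfixable gap here: the statement you are trying to prove is a \emph{conjecture} which the paper does not prove but conditionally \emph{refutes}. Theorem \ref{thm:ws_contradicts_abc} shows that the Weakly Special Conjecture contradicts the Orbifold Mordell Conjecture, and hence the abc conjecture, so no correct proof along any lines can exist unless abc fails. Your own final paragraph already isolates the fatal point: the inf-multiplicities of the fibres of a fibration are invisible to the weakly special condition, which only sees \'etale covers and hence only gcd-multiplicities. The paper makes this precise and shows the obstruction is real: starting from Lafon's Enriques surface $S$ over $\mathbb{Q}(t)$ with $S(\mathbb{C}\laurent{t})=\emptyset$ (Theorem \ref{thm:lafon}), it builds smooth projective threefolds $X_d \to \mathbb{P}^1_{\mathbb{Q}}$ whose fibres over $d$ points are inf-multiple of inf-multiplicity two but \emph{non-divisible} (Theorem \ref{thm1}). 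Non-divisibility is exactly what keeps $X_d$ weakly special --- divisible fibres would, via Lemma \ref{lemma:stein} and the construction in the remark following Theorem \ref{thm:ws}, produce \'etale covers dominating general type, but inf-multiple non-divisible fibres do not --- while the orbifold base $(\mathbb{P}^1,\frac{1}{2}[a_1]+\cdots+\frac{1}{2}[a_d])$ is of general type for $d\geq 5$. Any dense set of $L$-points on $X_d$ would project to a dense set of orbifold integral points on this general-type orbifold curve, contradicting Orbifold Mordell (Theorem \ref{thm:abc}). So your ``fibration-assembly step'' is not merely delicate; it provably fails for these $X_d$ assuming abc.

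A secondary error in your reduction: a weakly special variety need not admit, even after \'etale cover, a tower $X \to Z \to A$ with rationally connected fibres over an abelian base. Your appeal to the building blocks being abelian or rationally connected is exactly what the counterexamples evade: the fibres of $X_d \to \mathbb{P}^1$ are Enriques surfaces (and K3 surfaces in Theorem \ref{thm1:K3_version}), which are weakly special but neither rationally connected nor abelian. This matters arithmetically and not just formally, since by \cite[Remarque~1.3]{Ca05} inf-multiple degenerations of rationally connected varieties do not exist and those of abelian varieties are divisible --- so for your proposed building blocks the multiplicity obstruction genuinely vanishes --- whereas Enriques and K3 fibres are precisely the smallest cases admitting inf-multiple non-divisible degenerations, as Theorems \ref{thm1} and \ref{thm1:K3_version} establish (answering \cite[Question~1.4]{Ca05}). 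The corrected statement you should be aiming at is Conjecture \ref{conj:special}, with ``weakly special'' strengthened to ``special'': the special condition, unlike the weakly special one, does see the orbifold base of every fibration and thus rules out the general-type orbifold cores on which your assembly step founders.
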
 

Our main result disproves the Weakly Special Conjecture, assuming the Orbifold Mordell Conjecture (explained below, see Conjecture \ref{conj:orb_mor}). 

\begin{alphtheorem}\label{thm:ws_contradicts_abc}
The Weakly Special Conjecture contradicts the Orbifold Mordell Conjecture.
\end{alphtheorem}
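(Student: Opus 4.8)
The plan is to reduce Theorem~\ref{thm:ws_contradicts_abc} to the construction of a single smooth projective threefold with tailor-made properties, and then to derive the contradiction by a valuative argument relating rational points of the threefold to orbifold-integral points of its base. So suppose for contradiction that both the Weakly Special Conjecture (Conjecture~\ref{conj}) and the Orbifold Mordell Conjecture (Conjecture~\ref{conj:orb_mor}) hold. I would produce a smooth projective threefold $X$ over a number field $K$ together with a fibration $f\colon X \to \PP^1_K$ whose generic fibre is an Enriques (or K3) surface, which has nowhere reduced, non-divisible fibres over finitely many points $p_1,\dots,p_r \in \PP^1(K)$ of multiplicities $m_1,\dots,m_r$, such that the orbifold base $(\PP^1,\Delta)$ with $\Delta=\sum_i (1-\tfrac{1}{m_i})p_i$ is of general type, and such that $X$ is weakly special.

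For the construction I would start from the Enriques surface over $\QQ(t)$ studied by Lafon, regard it as the generic fibre of a family over a dense open of $\PP^1$, and complete it to a smooth projective model $f\colon X \to \PP^1$ after suitable blow-ups and base changes chosen so that the degenerate fibres acquire the prescribed nowhere reduced, non-divisible structure. The delicate inputs are that the family is non-isotrivial, that the multiplicities force $-2+\sum_i(1-\tfrac{1}{m_i})>0$ so that $(\PP^1,\Delta)$ is of general type, and that $X$ is nonetheless weakly special. For the last point I would compute the fundamental group of $X$ and of its finite étale covers, ruling out any étale cover that rationally dominates a positive-dimensional variety of general type; it is precisely the non-divisibility of the fibres that prevents an étale cover from unwinding the multiplicities and exposing such a quotient, and the behaviour of local points along the relevant covers is what controls this. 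I expect this verification of weak specialness to be the main obstacle.

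Granting such an $X$, the heart of the deduction is the following local observation. Fix a proper model $\mathcal{X}\to\Spec\OO_{L,S}$ over a finite extension $L/K$, chosen so that for each $i$ the identity $f^\ast(t-a_i)=u^{m_i}\cdot(\text{unit})$ holds away from $S$, where $t-a_i$ is a local coordinate on $\PP^1$ at $p_i$ and $u=0$ is a local equation of the reduced fibre component; here reducedness of the fibre is essential. Let $x\in X(L)$ with image $t=f(x)\in\PP^1(L)$, and let $v\notin S$. By the valuative criterion of properness, $x$ extends to a section $\Spec\OO_v\to\mathcal{X}$; if its reduction meets the multiple fibre over $p_i$, then evaluation at $x$ gives $v(t-a_i)=m_i\cdot v(u(x))\equiv 0 \pmod{m_i}$. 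Hence every $t\in f(X(L))$ is an orbifold-integral point of $(\PP^1,\Delta)$ over $L$.

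Finally I would assemble the contradiction. Since $X$ is weakly special, the Weakly Special Conjecture furnishes a finite extension $L/K$ with $X(L)$ Zariski dense; as $f$ is dominant and proper it is surjective, so $f(X(L))$ is Zariski dense, hence infinite, in $\PP^1_L$. By the previous paragraph this is an infinite set of orbifold-integral points of the general-type orbifold curve $(\PP^1,\Delta)_L$, contradicting the Orbifold Mordell Conjecture over the number field $L$. Thus the two conjectures cannot both hold, which is the assertion of Theorem~\ref{thm:ws_contradicts_abc}. Beyond the construction, the point needing the most care in writing is the integral argument of the third paragraph: one must arrange a model over which $f^\ast(t-a_i)=u^{m_i}\cdot(\text{unit})$ holds outside $S$, and check that enlarging $L$ (and hence $S$) when applying the Weakly Special Conjecture does not spoil orbifold-integrality at the remaining places.
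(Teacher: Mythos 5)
Your overall route is the same as the paper's: start from Lafon's Enriques surface, produce a threefold fibred over $\mathbb{P}^1$ with inf-multiple but non-divisible fibres whose orbifold base is a general type orbifold curve, prove it is weakly special, and then push rational points down to orbifold-integral points of the base to contradict Orbifold Mordell. However, two of your steps have genuine problems.

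First, the local identity at the heart of your third paragraph is impossible to arrange. You ask for a model on which $f^\ast(t-a_i)=u^{m_i}\cdot(\text{unit})$ with $u=0$ a reduced equation, and deduce $v(t-a_i)\equiv 0 \pmod{m_i}$. That identity says precisely that the fibre over $p_i$ is $m_i$ times a reduced divisor, i.e.\ that the fibre is \emph{divisible} --- but the fibres in this construction are non-divisible by design, and necessarily so: an Enriques surface over $\mathbb{C}\laurent{t}$ has index one (Remark~\ref{remark:enriques}), so no regular model of an Enriques fibration can have a divisible fibre (Proposition~\ref{prop:enriques_fibration}). Concretely, the fibre of $X$ over $t=0$ has components of multiplicity $2$ \emph{and} of multiplicity $3$ (Theorem~\ref{thm:main_insight}, Lemma~\ref{lemma:local_points}), so a section crossing the multiplicity-$3$ component transversally has $v(t)=3$: the congruence is not merely unproved, it is false. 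What is true, and what the paper uses, is the inequality $v(t-a_i)\geq m_i$: writing the fibre as $\sum_j b_j F_j$ with every $b_j\geq m_i$ (the inf-multiplicity hypothesis), a section $\epsilon$ meeting the fibre gives $v(t-a_i)=\sum_j b_j\,(\epsilon\cdot F_j)\geq m_i$. This inequality is exactly the orbifold-morphism condition in the paper's definition of integral points, hence exactly what the (inf-based) Orbifold Mordell Conjecture consumes. The gcd-versus-inf distinction that your congruence conflates is the central subtlety of the whole paper, and your argument lands on the wrong side of it.

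Second, weak specialness, which you defer as ``the main obstacle'' and propose to settle by computing $\pi_1(X)$ and of its \'etale covers. That cannot suffice: over $\mathbb{P}^1$ these threefolds are simply connected, so the fundamental group computation only tells you there are no nontrivial \'etale covers; it says nothing about whether $X$ itself rationally dominates a positive-dimensional variety of general type (simply connected threefolds of general type abound). The missing argument is the fibration argument of Theorem~\ref{thm:ws}: for any finite \'etale cover $X'\to X$, non-divisibility of the fibres makes the Stein factorization $X'\to Y'\to \mathbb{P}^1$ finite \'etale over $\mathbb{P}^1$ (Lemma~\ref{lemma:stein}) --- this is the precise content of your ``no unwinding of the multiplicities'' intuition --- and then a dominant rational map $X'\dashrightarrow Z$ to a positive-dimensional general type variety either contracts the general fibre, hence factors through the weakly special $Y'$, or restricts to dominant maps from (\'etale covers of) Enriques fibres onto general type subvarieties of $Z$; both are absurd since Enriques surfaces are weakly special. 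With the integrality step corrected as above and this fibration argument supplied, your outline becomes essentially the paper's proof; as written, both steps are genuine gaps.
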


The abc conjecture over number fields \cite[p.~84]{Vojta87} is known to imply the Orbifold Mordell Conjecture (see Theorem \ref{thm:abc_implies_orb_mordell} below), so that Theorem \ref{thm:ws_contradicts_abc} implies that the Weakly Special Conjecture contradicts abc.   For a summary of the relationships between the conjectures discussed here,  see the diagram at the end of the introduction.
 
A ``corrected''  version of the Weakly Special Conjecture has been known for some time and is recalled below (see Conjecture \ref{conj:special}). The latter conjecture replaces the hypothesis of $X$ being weakly special with the hypothesis of $X$ being special (Definition \ref{definition:special}).
The proof of Theorem~\ref{thm:ws_contradicts_abc}
precisely consists in constructing certain non-special weakly special threefolds.

Colliot-Thélène, Skorobogatov and Swinnerton-Dyer made in~\cite{CTSSD} the
key observation that the
 density of rational points on a variety fibred over the projective line can be obstructed by the presence of sufficiently many multiple
fibres, thanks to Faltings' theorem
(\emph{quondam} Mordell's conjecture).
The non-density,
assuming the Orbifold Mordell Conjecture,
of the rational points of the threefolds we construct in the proof of Theorem~\ref{thm:ws_contradicts_abc}
results from a refinement of this observation due to the second-named author,
who
introduced to this effect
two ways of attributing
multiplicities to nowhere reduced degenerations,
in~\cite[D\'efinition~2.1 and D\'efinition~4.2]{Ca11},
using either a gcd or an infimum.
  To explain this, let $f \colon X \to S$ be a     surjective morphism of integral noetherian schemes with $X$ normal and $S$ regular and let $s \in S$ be a point of codimension one with closure $D_s \subset S$. Let $X_{D_s}$ be the scheme-theoretic preimage of $D_s$ and note that $X_{D_s}$ is a non-empty divisor. We can write 
\[
X_{D_s} = R + \sum_{i=1}^n a_i F_i ,
\]
where $R$ is an effective $f$-exceptional divisor (i.e., none of its components surject onto $D_s$), each $F_i$ is an irreducible component of $X_{D_s}$ surjecting onto $D_s$, and $a_i\in \mathbb{Z}_{\geq 1}$. Note that $X_s = \sum_{i=1}^n a_i (F_i)_s$.

\begin{definition}
We say that the fibre $X_s$ is \emph{inf-multiple} if $\inf\{a_1,\ldots,a_n\} \geq 2$. We refer to this infimum as the \emph{inf-multiplicity} of the fibre. We say that the fibre $X_s$ is \emph{divisible} if $\gcd\{a_1,\ldots,a_n\} \geq 2$. We refer to this integer as the \emph{gcd-multiplicity} of the fibre.    
\end{definition}

Obviously, the gcd-multiplicity divides the inf-multiplicity. Moreover, the inf-multiplicity and gcd-multiplicity coincide if $X\to S$ is a family of abelian varieties or rationally connected varieties in characteristic zero (see \cite[Remarque~1.3]{Ca05}). However, by the main result of \cite{Winters}, there exist families of higher genus curves with non-divisible inf-multiple fibres; explicit examples of such families over $\mathbb{P}^1_{\mathbb{C}}$ are constructed in \cite[Th\'eor\`eme~5.1]{Ca05} and \cite{Stoppino}.  We note that the motivation for constructing such families was similar to the present one: the Orbifold Mordell Conjecture implies  in these examples   the  non-density of rational points on certain simply connected surfaces of general type fibred over $\mathbb{P}^1$ with specific inf-multiple, but no gcd-multiple,  fibres. 

We prove Theorem \ref{thm:ws_contradicts_abc} by showing that there exists a threefold fibred in Enriques surfaces over the projective line with an inf-multiple, but non-divisible fibre. 
The construction of this threefold relies on a subtle example due to Lafon \cite{Lafon} of an Enriques surface over $\mathbb{Q}(t)$ with no $\mathbb{C}\laurent{t}$-point (see Section~\ref{section:lafon}).
We note that this Enriques surface is given by explicit
equations which first appeared in~\cite{CTSSD}.
Lafon's motivation for constructing such an Enriques surface came from an old question of Serre (in a letter to Grothendieck \cite[p.~152]{GS}): If $X$ is a smooth projective variety over the function field $K$ of a complex curve $S$ with $\mathrm{H}^i(X,\mathcal{O}_X) = 0$ for all $i>0$, does $X$ necessarily have a $K$-point? Although Serre's question was first answered negatively by \cite{Graberetal}, Lafon's Enriques surface was the first explicit counterexample to Serre's question. 
 
Lafon's main result implies that the threefolds we obtain have precisely one inf-multiple fibre, which is of inf-multiplicity two. Then, by either appealing to the fact that Enriques surfaces over $\mathbb{C}\laurent{t}$ have index one (see e.g.~\cite[Theorem~1]{ELW}) or an explicit computation which shows that there is a component of multiplicity three, we can deduce that it has no divisible fibres; see Theorem \ref{thm:lafon}. As a consequence, we obtain the following result: 

\begin{alphtheorem}[Corollary \ref{corollary:first_theorem}] \label{thm1}
For every integer $d \geq 1$, there exists a smooth projective threefold $X_d$ over $\mathbb{Q}$ and a morphism $f\colon X_d \to \mathbb{P}^1_{\mathbb{Q}}$ such that the following properties hold.
\begin{enumerate}
\item The generic fibre of $X_d \to \mathbb{P}^1_{\mathbb{Q}}$ is an Enriques surface over $\mathbb{Q}(t)$.
\item There are exactly $d$ points in $\mathbb{P}^1(\overline{\mathbb{Q}})$ over which the fibre of $f$ is inf-multiple, and all of these fibres have inf-multiplicity two and are non-divisible.
\end{enumerate}
\end{alphtheorem}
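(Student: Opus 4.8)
The plan is to deduce the general case $d \ge 1$ from the case $d=1$, which is exactly Theorem~\ref{thm:lafon}, by base change along a carefully chosen self-map of $\mathbb{P}^1_{\mathbb{Q}}$. Write $g \colon Y \to \mathbb{P}^1_{\mathbb{Q}}$ for the threefold furnished by Theorem~\ref{thm:lafon}: it is smooth and projective over $\mathbb{Q}$, its generic fibre is an Enriques surface over $\mathbb{Q}(t)$, and it has a single inf-multiple fibre, of inf-multiplicity two and non-divisible (a component of multiplicity three occurs alongside one of multiplicity two, so that $\inf = 2$ while $\gcd = 1$). Since this inf-multiple fibre is unique, its base point $t_0$ is stable under $\Gal(\overline{\mathbb{Q}}/\mathbb{Q})$, hence $t_0 \in \mathbb{P}^1(\mathbb{Q})$. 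Let $B \subset \mathbb{P}^1_{\mathbb{Q}}$ be the finite closed locus, defined over $\mathbb{Q}$ and containing $t_0$, over which $g$ fails to be smooth.

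First I would construct a finite morphism $\phi \colon \mathbb{P}^1_{\mathbb{Q}} \to \mathbb{P}^1_{\mathbb{Q}}$ of degree $d$, defined over $\mathbb{Q}$, which is \emph{étale over all of} $B$. A degree-$d$ cover of $\mathbb{P}^1$ with simple branching at $2d-2$ points chosen disjoint from $B$ exists over $\overline{\mathbb{Q}}$ by the Riemann existence theorem; consequently the condition of being étale over the fixed $\mathbb{Q}$-scheme $B$ cuts out a non-empty Zariski-open subset, defined over $\mathbb{Q}$, of the rational parameter space of degree-$d$ rational maps, and this open subset has a $\mathbb{Q}$-point because $\mathbb{Q}$ is infinite. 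In particular $\phi$ is étale over $t_0$, so $\phi^{-1}(t_0)$ consists of exactly $d$ distinct geometric points.

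I would then set $X_d := Y \times_{\mathbb{P}^1_{\mathbb{Q}},\, \phi} \mathbb{P}^1_{\mathbb{Q}}$ and let $f \colon X_d \to \mathbb{P}^1_{\mathbb{Q}}$ be the second projection. The crucial observation is that $X_d$ is already smooth, with no resolution required: over $\mathbb{P}^1 \setminus B$ the morphism $g$ is smooth, so $f$ is smooth there by base change; over $B$ the map $\phi$ is étale, so the first projection $X_d \to Y$ is étale over $g^{-1}(B)$, whence $X_d$ is smooth along the preimage of $B$ because $Y$ is. As $X_d$ is moreover projective and connected (its fibres are geometrically connected Enriques surfaces), it is a smooth projective threefold over $\mathbb{Q}$. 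Its generic fibre is the base change of $Y_\eta$ to the function field of the source $\mathbb{P}^1_{\mathbb{Q}}$, which is again a rational function field isomorphic to $\mathbb{Q}(t)$; since the defining numerical properties of an Enriques surface are stable under field extension, property~(i) holds.

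It remains to analyse the fibres. For $s \in \phi^{-1}(t_0)$, étaleness of $\phi$ at $s$ identifies the geometric fibre $(X_d)_{\bar s}$ with $Y_{\overline{t_0}}$, so the decomposition into components and their multiplicities is unchanged: each such fibre is inf-multiple of inf-multiplicity two and non-divisible, and there are exactly $d$ of them. Over $\phi^{-1}(B \setminus \{t_0\})$ the geometric fibres are isomorphic to the remaining (non-inf-multiple) singular fibres of $g$, and over $\phi^{-1}(\mathbb{P}^1 \setminus B)$ they are smooth; in neither case is the fibre inf-multiple. Hence $f$ has exactly $d$ inf-multiple fibres, all of inf-multiplicity two and non-divisible, which is property~(ii). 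The main obstacle is the construction of $\phi$ over $\mathbb{Q}$ étale over the \emph{entire} bad locus $B$, rather than merely over $t_0$: it is precisely this stronger étaleness that keeps $X_d$ smooth without any resolution and prevents new inf-multiple fibres from arising over the other singular fibres of $g$.
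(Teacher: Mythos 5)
Your proposal is correct, and it shares the paper's core strategy---pull back the $d=1$ fibration along a degree-$d$ self-map of $\mathbb{P}^1_{\mathbb{Q}}$ \'etale over $0$---but it handles the key technical difficulty differently. The paper (proof of Corollary \ref{corollary:first_theorem}, via Theorem \ref{thm2}) takes the explicit map $z \mapsto z^d-1$, which is \'etale over $0$ but not necessarily over the rest of the bad locus, forms the fibre product, and then passes to a desingularization; controlling the fibres of that desingularization is precisely what requires the local-points machinery (Lemma \ref{lemma:local_points}, Lemma \ref{lemma:index}, and the Lang--Nishimura argument of Remark \ref{remark:inf_multiplicity}), which shows that inf- and gcd-multiplicities depend only on the generic fibre and hence survive birational modification. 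You instead choose $\phi$ \'etale over the \emph{entire} non-smooth locus $B$ of $g$, which makes the fibre product smooth outright (\'etale base change over $B$, smooth base change off $B$), so no resolution is needed and the fibres over $\phi^{-1}(B)$ are literally residue-field base changes of the fibres of $g$, preserving all multiplicities on the nose. This buys a more elementary argument---no appeal to local points or birational invariance in this step---at the cost of a slightly more elaborate construction of $\phi$; your parameter-space density argument for $\phi$ is valid, though the Riemann existence theorem is overkill (composing $z \mapsto z^d$ with a M\"obius transformation in $\mathrm{PGL}_2(\mathbb{Q})$ moving the two branch points off the finite set $B$ already suffices). Two slips of attribution rather than substance: Theorem \ref{thm:lafon} furnishes only the Enriques surface over $\mathbb{Q}(t)$, not the threefold $g \colon Y \to \mathbb{P}^1_{\mathbb{Q}}$, whose existence requires taking a smooth projective model (resolution of singularities); and the properties you ascribe to it (unique inf-multiple fibre at $0$, of inf-multiplicity two and non-divisible, with every other fibre having a reduced component) are the content of Theorem \ref{thm:main_insight}, which your argument needs in full---in particular part (ii), to rule out inf-multiple fibres over $\phi^{-1}(B \setminus \{0\})$.
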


By investigating how local points above $t=0$ behave along a certain K3 double cover of Lafon's Enriques surface, we obtain a similar result for K3 surfaces. 

\begin{alphtheorem}[Consequence of Theorem \ref{thm2:K3_version}] \label{thm1:K3_version}
For every integer $d \geq 1$, there exists a smooth projective threefold $Y_d$ over $\mathbb{Q}$ and a morphism $g\colon Y_d \to \mathbb{P}^1_{\mathbb{Q}}$ such that the following properties hold.
\begin{enumerate}
\item The generic fibre of $Y_d \to \mathbb{P}^1_{\mathbb{Q}}$ is a K3 surface over $\mathbb{Q}(t)$.
\item There are exactly $d$ points in $\mathbb{P}^1(\overline{\mathbb{Q}})$ over which the fibre of $g$ is inf-multiple, and all of these fibres have inf-multiplicity two and are non-divisible.
\end{enumerate}
\end{alphtheorem}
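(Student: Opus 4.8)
The plan is to treat the K3 statement as the canonical-double-cover analogue of the Enriques statement, so I would first isolate the single special fibre produced by Theorem~\ref{thm2:K3_version} and then multiply it into $d$ copies by a base change of the base $\PP^1$. Concretely, I expect Theorem~\ref{thm2:K3_version} to furnish a smooth projective threefold $Y \to \PP^1_\QQ$ whose generic fibre is the canonical K3 double cover of Lafon's Enriques surface over $\QQ(t)$ and which has exactly one inf-multiple fibre, of inf-multiplicity two and non-divisible. Granting this, the remaining task is a covering construction on the base; the real substance lies in the local analysis underlying Theorem~\ref{thm2:K3_version}, which I sketch next since it is the heart of the matter.

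For the core construction I would take the canonical double cover $\pi\colon \tilde E \to E$ of Lafon's Enriques surface $E/\QQ(t)$, defined as the relative spectrum of the $\OO_E$-algebra $\OO_E \oplus \omega_E$ with multiplication induced by the canonical trivialisation $\omega_E^{\otimes 2} \cong \OO_E$; this is a smooth projective K3 surface over $\QQ(t)$ and $\pi$ is a morphism of $\QQ(t)$-schemes. Since any $\CC\laurent{t}$-point of $\tilde E$ would map under $\pi$ to a $\CC\laurent{t}$-point of $E$, Lafon's theorem that $E(\CC\laurent{t}) = \emptyset$ forces $\tilde E(\CC\laurent{t}) = \emptyset$ as well. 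In a regular proper model of $\tilde E$ over $\CC\powerseries{t}$ this rules out any multiplicity-one component in the fibre over $t=0$, so that fibre is inf-multiple, i.e. of inf-multiplicity at least two.

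The main obstacle is to pin down the inf-multiplicity as exactly two and, above all, to establish non-divisibility: here, unlike in the Enriques case, I cannot appeal to an index-one theorem such as \cite[Theorem~1]{ELW}, since no analogous statement is available for K3 surfaces over $\CC\laurent{t}$. Instead I would analyse the degeneration of the \'etale $\ZZ/2$-cover directly. Starting from the regular model of $E$ produced in the proof of Theorem~\ref{thm:lafon} — whose special fibre contains components of multiplicity two and three — I would extend $\pi$ to a double cover of models, normalise and resolve, and track how each component of the special fibre and its multiplicity transforms according to whether the torsor splits or stays connected over that component and to the ramification of the cover. The aim of this computation is to exhibit two components upstairs with coprime multiplicities, which simultaneously bounds the inf-multiplicity by two and yields $\gcd = 1$; this bookkeeping for the double cover over $t=0$ is exactly the delicate point.

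Finally, to pass from one inf-multiple fibre to exactly $d$, I would choose a finite morphism $\phi\colon \PP^1 \to \PP^1$ over $\QQ$ that is \'etale over each of the finitely many non-smooth fibres of $Y \to \PP^1$, with exactly $d$ geometric points over the inf-multiple one, and ramified only over points with smooth fibre. Setting $Y_d := Y \times_{\PP^1,\phi} \PP^1$ with $g$ the second projection, at every point of $Y_d$ either $\phi$ is \'etale (so $Y_d \to Y$ is \'etale and $Y_d$ smooth) or $Y \to \PP^1$ is smooth (so $Y_d$ is smooth over the smooth base $\PP^1$), hence $Y_d$ is smooth projective, with a resolution available as a fallback if one relaxes the hypotheses on $\phi$. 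Over the $d$ preimages of the special point the family is \'etale-locally isomorphic to the original, so these fibres are inf-multiple of inf-multiplicity two and non-divisible, while \'etale base change over the other bad points and smoothness over the ramification points introduce no new inf-multiple fibres; hence there are exactly $d$. The generic fibre of $g$ is $\tilde E$ base-changed along $\phi$, still a K3 surface over the function field $\QQ(u) \cong \QQ(t)$, giving the required threefold, precisely as the passage from Theorem~\ref{thm:lafon} to Corollary~\ref{corollary:first_theorem} does in the Enriques case.
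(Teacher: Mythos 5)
Your overall architecture matches the paper's: pass to the K3 double cover of Lafon's Enriques surface, control its fibre at $t=0$, then multiply that fibre into $d$ copies by a base change of $\mathbb{P}^1$ that is \'etale over the bad locus (the paper does exactly this, taking $C=\mathbb{P}^1_{\mathbb{Q}}$ and $f$ of degree $d$ and \'etale over $0$ in Theorem \ref{thm2:K3_version}). But there is a genuine gap at the step you yourself flag as ``the delicate point'': you give no argument that the fibre at $t=0$ of a model of the K3 cover has inf-multiplicity exactly two and is non-divisible, nor that the fibres over every $t_0 \neq 0$ (including $t=\infty$) have a reduced component --- and without the latter you cannot even assert that the original family has exactly \emph{one} inf-multiple fibre, which your final base-change step takes as given. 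Your proposed mechanism --- extend the $\mathbb{Z}/2$-cover to a cover of models, normalise, resolve, and ``track'' multiplicities --- has no way of deciding, for each component of the special fibre, whether the cover is ramified, inert, or split at its generic point; a component of multiplicity $2$ (resp.\ $3$) downstairs could a priori pull back to one of multiplicity $4$ (resp.\ $6$), destroying both conclusions. Moreover, this cannot be settled by bookkeeping generalities, because the outcome genuinely depends on the arithmetic of Lafon's equations: the paper shows (Lemma \ref{lemma:k3cover_doesnt_extend}) that over the fibre at $t=-\alpha$, with $\alpha$ the golden ratio, \emph{no} form of the K3 double cover extends to an \'etale cover of any local model, so the same tracking procedure at that fibre comes out the opposite way.

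The key input your sketch is missing is liftability of local points along the cover: every $\mathbb{C}\laurent{t^{1/n}}$-point of the Enriques surface lifts to the explicit K3 cover $w^2 = t^2u^2 - t$, because $t^2u^2-t$ is forced to be a square at any such point (Lemma \ref{lemma:K3_calculation}, a case-by-case valuation analysis). Concretely, the $\mathbb{C}\laurent{t^{1/2}}$-point of $S$ lifts with $w=\sqrt{-t}$; the $\mathbb{C}\laurent{t^{1/3}}$-point lifts since there $t^2u^2-t = t^{2/3}\bigl(1-t^{1/3}\bigr)$ is a square; and the local points at $t_0\neq 0$ and at $\infty$ lift with $w=\sqrt{-t}$ and $w=\sqrt{t^2-t}$ respectively. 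Combined with $S'(\mathbb{C}\laurent{t})=\emptyset$ (which you did prove) and Lemma \ref{lemma:local_points}, this yields inf-multiplicity exactly $2$ at $t=0$, a component of multiplicity $3$ there (hence $\gcd = 1$), and reduced components at all other points --- precisely the three facts your plan needs. The paper packages this as the statement that $S'\to S$ extends to a finite \'etale cover of models near $t=0$ (via the lifting criterion of Lemma \ref{lemma:etale_follows_from_lifiting}) and then transports the known multiplicity structure of $X_0$ upstairs. Your remark that the index-one theorem of \cite{ELW} is unavailable for K3 surfaces is correct, and it is exactly why some such computation is unavoidable; but identifying the need for an argument is not the same as supplying one.
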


The threefolds $X_d$ in Theorem \ref{thm1} are (automatically) weakly special by the following result, which we prove using the aforementioned fact that the index of an Enriques surface over $\mathbb{C}\laurent{t}$ equals one:

\begin{alphtheorem}[Consequence of Corollary \ref{cor:enriques_fibration}]\label{thm:enriques_fibration}
Let $X\to C$ be a surjective morphism of smooth projective varieties over a field $k$ of characteristic zero with $C$ a smooth projective curve of genus at most one over $k$. If the generic fibre of $X\to C$ is an Enriques surface, then $X$ is weakly special.
\end{alphtheorem}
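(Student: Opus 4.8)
The plan is to work throughout over an algebraically closed field of characteristic zero, which is legitimate since weak specialness of $X$ is by definition a property of $X_{\overline{k}}$; by the Lefschetz principle I may even take $k=\mathbb{C}$. Write $f\colon X\to C$ for the given fibration, so that $X$ is a smooth projective threefold and $C$ is a smooth projective curve with $g(C)\le 1$. The goal is to show that no connected finite \'etale cover $u\colon X'\to X$ admits a dominant rational map onto a positive-dimensional smooth projective variety $Z$ of general type.

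The first step is to use the index hypothesis to control the non-reduced fibres. Applying the fact that an Enriques surface over $\mathbb{C}\laurent{t}$ has index one to the completion of $\mathcal{O}_{C,s}$ at each of the finitely many points $s$ with non-reduced fibre, I claim that $f$ has no divisible fibre, i.e.\ every fibre has gcd-multiplicity one. Indeed, over such a complete discrete valuation ring the closure $H$ of a closed point of degree $d$ of the generic fibre is a multisection with $H\cdot X_s=d$; writing $X_s=\sum_i a_iF_i$ and expanding $H\cdot X_s=\sum_i a_i\,(H\cdot F_i)$ shows that $\gcd_i(a_i)$ divides $d$, hence divides the index, which is $1$. (Alternatively one invokes the explicit component of multiplicity three for Lafon's surface.)

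The heart of the argument is to deduce from the absence of divisible fibres that $u\colon X'\to X$ is, up to the canonical vertical $\mathbb{Z}/2$, an \'etale base change. Since the geometric generic fibre $F$ of $f$ is an Enriques surface, $\pi_1(F)=\mathbb{Z}/2$, and because all gcd-multiplicities equal one the base of $f$ carries no orbifold structure; the orbifold fundamental group sequence for a fibred surface (Nori, Xiao, Campana) therefore reduces to $\pi_1(F)\to\pi_1(X)\to\pi_1(C)\to 1$. Thus $\pi_1(X)$ is an extension of the abelian group $\pi_1(C)$ (abelian as $g(C)\le 1$) by a quotient of $\mathbb{Z}/2$. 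Intersecting the finite-index subgroup attached to $X'$ with the image of $\pi_1(F)$ and with $\pi_1(C)$ shows that $X'$ fibres as $X'\to C'\to C$, where $C'\to C$ is finite \'etale, so $g(C')\le 1$, and the geometric generic fibre of $X'\to C'$ is a connected finite \'etale cover of $F$, hence either an Enriques surface or its K3 double cover. I expect this step to be the main obstacle: it is exactly where the \emph{gcd}-multiplicity, rather than the inf-multiplicity, governs the \'etale homotopy type, so that non-divisible (though inf-multiple) fibres create no new covers and no ramification over $C$, and where the genus bound on $C$ keeps $C'$ away from general type.

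It then remains to exclude a dominant rational map $X'\dashrightarrow Z$ onto a positive-dimensional variety $Z$ of general type. Both possible fibres of $X'\to C'$ are surfaces with Kodaira dimension $0$ and irregularity $q=0$, so Iitaka's easy addition gives $\kappa(X')\le 0+\dim C'=1$; since $\dim X'=3$, a generically finite dominant map would force $\kappa(X')\ge\kappa(Z)=\dim Z$, ruling out $\dim Z=3$. For the remaining cases I resolve the map to a morphism $\psi\colon\widetilde{X'}\to Z$ and inspect the image $\psi(\widetilde{F})$ of a general fibre $\widetilde{F}$ of $\widetilde{X'}\to C'$. If $\psi$ contracts $\widetilde{F}$, then $\psi$ factors through $C'$, so $Z$ is dominated by a curve of genus $\le 1$ and cannot be of general type. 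If $\dim\psi(\widetilde{F})=1$, then $q(\widetilde{F})=0$ forces the image to be a rational curve; as these curves cover $Z$, it is rational when $\dim Z=1$ and uniruled when $\dim Z=2$, so again not of general type. Finally, if $\dim\psi(\widetilde{F})=2$ then $\dim Z=2$ and $Z=\psi(\widetilde{F})$ is dominated by a surface of Kodaira dimension $0$, whence $\kappa(Z)\le 0$, a contradiction. Therefore no such $Z$ exists and $X$ is weakly special.
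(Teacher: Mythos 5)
Your proof is correct in substance, but it implements both key steps differently from the paper. For the structure of \'etale covers, the paper never invokes topological fundamental groups: its Theorem \ref{thm:ws} runs the Stein factorization $X' \to Y' \to Y$ of $X' \to X \to Y$ through Lemma \ref{lemma:stein}, where Zariski--Nagata purity plus non-divisibility of the fibres shows $Y' \to Y$ is finite \'etale; this is purely algebraic, so no Lefschetz transfer to $\mathbb{C}$ is needed. Your route through the homotopy exact sequence $\pi_1(F) \to \pi_1(X) \to \pi_1(C) \to 1$ reaches the same conclusion, but be aware that this exact sequence is precisely the paper's Lemma \ref{lemma:nori}, which the paper proves at some length (there being no reference covering the gcd-multiplicity case, as opposed to Nori's ``some component reduced'' hypothesis) and uses only for the later fundamental group computations; so the step you flag as ``the main obstacle'' is genuinely the crux, and citing Nori/Xiao/Campana does not fully discharge it. Also, ``intersecting with $\pi_1(C)$'' is loose ($\pi_1(C)$ is a quotient, not a subgroup), though the intended argument---project $\pi_1(X')$ to get $C'$, then check that the Stein factorization of $X' \to C'$ is trivial---does work. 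For the exclusion of general-type images, the paper argues abstractly: if $g$ does not contract the general fibre, the images $g(X'_{y'})$ form a covering family of the general type variety $Z$, whose general member is then of general type, contradicting weak specialness of the fibre. Your argument instead exploits the specific invariants $q=0$, $\kappa=0$ of Enriques/K3 covers: easy addition to kill $\dim Z = 3$, the Albanese bound $g(B)\le q$ to force rational-curve images, and uniruledness for $\dim Z = 2$. Your version is more elementary and self-contained at this point, avoiding the nontrivial covering-family fact; the paper's is more general---Theorem \ref{thm:ws} applies to any fibration with weakly special fibres and weakly special base and no divisible fibres, which is what lets the paper reuse it elsewhere.

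One blemish: your parenthetical ``alternatively one invokes the explicit component of multiplicity three for Lafon's surface'' is out of place, since the statement concerns an \emph{arbitrary} Enriques fibration over an \emph{arbitrary} curve of genus at most one, not Lafon's specific surface; in this generality only the index-one argument (via \cite{ELW}, as in Proposition \ref{prop:enriques_fibration}) is available.
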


The threefolds $X_d$ constructed in Theorem~\ref{thm1} are also,
for $d\geq 5$,
the first examples of ``gcd-special'' nonspecial varieties, thereby
answering a question formulated in \cite[p.624]{Ca04}.

Lafon's Enriques surface directly leads to the construction of the threefold $X_1$ in Theorem \ref{thm1}. We then obtain the threefold $X_d$ by pulling back the fibration $X_1\to \mathbb{P}^1$ along a suitable degree $d$ morphism $\mathbb{P}^1\to \mathbb{P}^1$, thereby increasing the number of inf-multiple fibres arbitrarily.

The threefolds $Y_d$ in Theorem \ref{thm1:K3_version} are also weakly special; this does not however follow directly from Theorem \ref{thm:enriques_fibration}, but is more specific to the construction (see Section \ref{section:K3}). 

To prove Theorem \ref{thm:ws_contradicts_abc}, we first interpret $K$-rational points on $X_d$ as $\mathcal{O}_{K,S}$-integral points on some model. By \cite{Ca04, Ca11}, these integral points project down to integral points on $\mathbb{P}^1_{\mathcal{O}_K}$ satisfying certain tangency conditions determined by the inf-multiple fibres of $X_d\to \mathbb{P}^1$. To make this more precise, we employ the notions of orbifold base and orbifold morphism.

Let $X$ be an integral locally factorial noetherian scheme, and let $\Delta = \sum_i (1-\frac{1}{m_i}) D_i$ be a $\mathbb{Q}$-divisor, where $m_i\in \mathbb{Z}_{\geq 1}\cup \{\infty\}$. We will refer to $(X,\Delta)$ as an \emph{orbifold}, and refer to $m_i$ as the \emph{multiplicity} of $D_i$ in $\Delta$.

If $X$ is a locally factorial variety over a field $k$ of characteristic zero and $(X,\Delta)$ is an orbifold, then we say that $(X,\Delta)$ is \emph{smooth} if $X$ is smooth and the support of $\Delta$ has simple normal crossings. Moreover, we say that the orbifold $(X,\Delta)$ is \emph{proper} if $X$ is proper over $k$. If $(X,\Delta)$ is a smooth proper orbifold, then we say that $(X,\Delta)$ is of \emph{general type} if $K_X+\Delta$ is a big $\mathbb{Q}$-divisor. 

An important class of orbifolds are those associated to a rational fibration (see \cite[Definition~4.2]{Ca11}). More precisely, let $f \colon X \ratmap Y$ be a dominant rational map of smooth proper varieties over $k$ with geometrically connected fibres and choose a proper birational morphism $X'\to X$ with $X'$ a smooth projective variety such that $X' \to X \ratmap Y$ is a morphism.
For a prime divisor $D\subset Y$ with generic point $\eta_D$, let $m_f(D)$ be the inf-multiplicity of $X'\to Y$ over $\eta_D$ (this positive integer is independent of the choice of $X'\to X$ by Remark \ref{remark:inf_multiplicity}). 
 We define the \emph{orbifold divisor} $\Delta_f$ \emph{of $f$} to be 
\[ \Delta_f := \sum_{D\subset Y} \left(1-\frac{1}{m_f(D)}\right) D, \]
where the sum runs over all prime divisors $D$ of $Y$ and refer to $(Y,\Delta_f)$ as the \emph{orbifold base of}~$f$. 

With notation as in Theorem \ref{thm1}, the orbifold base of $X_d\to \mathbb{P}^1_{\mathbb{Q}}$ is $(\mathbb{P}^1_{\mathbb{Q}}, \frac{1}{2}[a_1]+\ldots +\frac{1}{2}[a_d])$ for some $\mathrm{Gal}(\overline{\mathbb{Q}}/\mathbb{Q})$-invariant collection of pairwise distinct algebraic numbers $a_1,\ldots,a_d$. Computing the degree of the canonical divisor shows that the orbifold base of $X_d\to \mathbb{P}^1_{\mathbb{Q}}$ is of general type precisely when $d\geq 5$. 

To see the number-theoretic implications of the inf-multiple fibres we need the notion of orbifold integral point (or more generally orbifold morphism). To define this, for $f \colon X \to Y$ a morphism of integral schemes and $D$ a Cartier divisor on $Y$ with $f(X) \not \subset D$, we let $f^\ast D$ be the pullback Cartier divisor on $X$; see \cite[Tag~02OO]{stacks-project}. 

We start with the general notion of an orbifold morphism. 
Let $T$ be an integral normal noetherian scheme. An \emph{orbifold morphism} $f \colon T \to (X,\Delta)$ is a morphism of schemes $f \colon T \to X$ such that $f(T) \not\subset \supp(\Delta)$ and, for every $i$ and every irreducible component $E$ of the scheme-theoretic inverse image $f^{-1} (D_i)$, the multiplicity of $E$ in $f^* D_i$ is at least $m_i$. 

To apply the above notion of orbifold morphism in the context of integral points, we define the notion of a model of an orbifold as follows. 
Let $S$ be an integral noetherian scheme with function field $K=K(S)$. 
Let $(X,\sum_{i}(1-\frac{1}{m_i})D_i)$ be an orbifold, where $X$ is a finite type separated $K$-scheme. A \emph{model for $(X,\Delta)$ over $S$} is the data of an orbifold $(\mathcal{X}, \sum_{i} (1-\frac{1}{m_i}) \mathcal{D}_i)$, where $\mathcal{X}$ is an integral locally factorial flat finite type separated $S$-scheme and $\mathcal{D}_i$ is a divisor on $\mathcal{X}$, together with an isomorphism $\varphi \colon \mathcal{X}_K\to X$ over $K$ such that $\varphi(\mathcal{D}_{i,K})$ on $X$ equals $D_i$.

We now specialize to number fields. Let $(X,\Delta)$ be an orbifold with $\Delta = \sum_i (1-\frac{1}{m_i})D_i$, where $X$ is a locally factorial variety over a number field $K$. We say that $(X,\Delta)$ is \emph{Mordellic} if, for every finite field extension $L/K$, every regular $\mathbb{Z}$-finitely generated subring $R \subset L$ and every model $(\mathcal{X}, \sum_i (1-\frac{1}{m_i})\mathcal{D}_i)$ for $(X_L, \sum_{i} (1-\frac{1}{m_i})D_{i,L})$ over $R$, the set $(\mathcal{X}, \sum_i (1-\frac{1}{m_i}) \mathcal{D}_i)(R)$ of orbifold maps $\Spec R \to (\mathcal{X}, \sum_i (1-\frac{1}{m_i})\mathcal{D}_i)$ is finite. 
We say that $(X,\Delta)$ is \emph{arithmetically special} (or: \emph{satisfies potential density}) if there is a finite field extension $L/K$, a regular $\mathbb{Z}$-finitely generated subring $R \subset L$ and a model $(\mathcal{X}, \sum_i (1-\frac{1}{m_i}) \mathcal{D}_i)$ for $(X_L, \sum_i (1-\frac{1}{m_i})D_{i,L})$ over $R$ such that $(\mathcal{X}, \sum_i (1-\frac{1}{m_i})\mathcal{D}_i)(R)$ is dense in $X$.
 
It is not hard to show that if $(X,\Delta)$ is a smooth proper orbifold curve over a number field $K$ and $K_X+\Delta$ is not big, then $(X,\Delta)$ is arithmetically special \cite{Ca05}. The converse to this statement is the Orbifold Mordell conjecture (which first appeared in \cite[Conjecture~4.5]{Ca05} and is a very special case of \cite[Conjecture~13.23]{Ca11}):

\begin{conjecture}[Orbifold Mordell]\label{conj:orb_mor}
If $(X,\Delta)$ is a smooth proper orbifold curve of general type over a number field $K$, then $(X,\Delta)$ is Mordellic.
\end{conjecture}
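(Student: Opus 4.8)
The plan is to adapt the covering trick that proves the analogue of Conjecture~\ref{conj:orb_mor} for the \emph{divisibility} tangency condition (the theorem of Darmon and Granville on generalized Fermat equations) and to try to push it through for the inf-multiplicity condition. First I would observe that an orbifold integral point $\sigma\colon \Spec R\to(\mathcal{X},\Delta)$ is determined by its generic fibre, which is an $L$-point of $X$; by properness $\sigma$ is recovered from this point, so it suffices to bound the relevant points of $X(L)$. If $g(X)\ge 2$ this is immediate from Faltings' theorem, since the orbifold integral points form a subset of $X(L)$. The content is therefore concentrated in the cases $g(X)\in\{0,1\}$, where general type forces $\Delta$ to be large, as $\deg(K_X+\Delta)=2g(X)-2+\sum_i(1-\tfrac{1}{m_i})>0$.

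Next I would construct an orbifold-\'etale cover. A general type orbifold curve is hyperbolic, and there are no bad two-orbifolds of general type, so $(X,\Delta)$ is uniformized by a finite-covolume Fuchsian group; by Selberg's lemma this group has a torsion-free finite-index subgroup, yielding a finite morphism $\pi\colon Y\to X$ from a smooth curve $Y$ that is ramified over each geometric point of $D_i$ to order exactly $m_i$ (with $m_i=\infty$ interpreted as deleting $D_i$ and passing to the open curve). Riemann--Hurwitz gives $2g(Y)-2=\deg\pi\cdot\deg(K_X+\Delta)>0$, so $g(Y)\ge 2$ (in the punctured case $Y$ is an affine hyperbolic curve). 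A priori $\pi$ is defined over $\overline{K}$; since the branch data is defined over $K$, one descends $\pi$ to a finite extension $L/K$ by Galois descent, enlarging $L$ if necessary.

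The crux is a Chevalley--Weil-type lifting statement. Passing to integral models, I would base-change $\pi$ along a given orbifold integral point $\sigma$ and normalize, obtaining a finite cover $\Spec R'\to\Spec R$ whose generic fibre is a point of $Y$ defined over an extension $L'/L$ of degree at most $\deg\pi$. If every local tangency of $\sigma$ with $\mathcal{D}_i$ were divisible by $m_i$, this cover would be unramified at the varying primes lying over the $\mathcal{D}_i$, so $L'$ would be unramified outside a \emph{fixed} finite set of places; Hermite--Minkowski would then confine $L'$ to finitely many fields, and Faltings' theorem applied to $Y$ over each such field (or Siegel's theorem in the punctured case) would give finiteness. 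This is exactly the Darmon--Granville argument, and it proves Conjecture~\ref{conj:orb_mor} unconditionally for the divisibility tangency condition.

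The main obstacle is that Conjecture~\ref{conj:orb_mor} uses the inf-multiplicity condition ``multiplicity $\ge m_i$'', which is strictly weaker. When $\sigma$ meets $\mathcal{D}_i$ at a prime $\mathfrak{p}$ with multiplicity $e\ge m_i$ but $m_i\nmid e$, the normalized base change of $\pi$ is tamely ramified at $\mathfrak{p}$ of order $m_i/\gcd(e,m_i)>1$, and the primes $\mathfrak{p}$ at which this occurs are not confined to any fixed finite set as $\sigma$ varies. Consequently the discriminant of $L'/L$ is unbounded, Hermite--Minkowski no longer applies, and the reduction to finitely many curves of genus $\ge 2$ collapses. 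I do not expect this gap to be closable by purely geometric covering techniques: controlling the lifts requires an \emph{effective} bound on the tangencies, precisely the sort of input supplied by the abc conjecture. This is why the unconditional statement remains a conjecture and why one instead establishes the conditional implication recorded in Theorem~\ref{thm:abc_implies_orb_mordell}; the distinction at play here between divisibility and inf-multiplicity is exactly the phenomenon exploited throughout the present article.
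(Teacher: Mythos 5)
The statement you were asked about is a \emph{conjecture}, and the paper contains no proof of it: it explicitly records that Orbifold Mordell follows from Faltings only when $g(X)\geq 2$ or when $\Delta$ has only infinite multiplicities, that it is open whenever $\Delta$ has finite multiplicities and $g(X)\leq 1$, and that the only known general result is the conditional implication abc $\Rightarrow$ Orbifold Mordell (Theorem~\ref{thm:abc_implies_orb_mordell}, via Smeets, building on Abramovich and Campana). Your write-up correctly refrains from claiming a proof, and its content is accurate and consistent with the paper's framing: the reduction of orbifold integral points to rational points and the immediate disposal of the case $g(X)\geq 2$ by Faltings match the paper's remark; your covering construction (uniformization plus Selberg's lemma, Riemann--Hurwitz, descent of the cover, Chevalley--Weil lifting, Hermite--Minkowski, then Faltings or Siegel upstairs) is exactly the Darmon--Granville mechanism, which does prove the \emph{divisibility} (gcd) variant of the statement unconditionally.

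Your diagnosis of why this fails for Conjecture~\ref{conj:orb_mor} as stated is also the correct one: under the inf-multiplicity condition a tangency $e\geq m_i$ with $m_i\nmid e$ makes the normalized pullback ramified of order $m_i/\gcd(e,m_i)$ at a prime that varies with the point, so the discriminants of the lifted fields are unbounded, Hermite--Minkowski gives no control, and the reduction to finitely many hyperbolic curves breaks down. This gcd-versus-inf discrepancy is precisely the distinction the paper is built around (it is why the inf-multiple but non-divisible fibres of the threefolds $X_d$ carry arithmetic content not captured by any \'etale cover), and why the paper invokes the conjecture as a hypothesis, established only under abc. In short: there is nothing to compare your argument against, since the paper proves no such statement; your proposal is a sound account of the known partial results and of the genuine obstruction to an unconditional proof.
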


If $X$ is of genus at least two or $\Delta$ only has infinite multiplicities, then the Orbifold Mordell Conjecture follows from Faltings's finiteness theorem \cite{FaltingsComplements}. However, the Orbifold Mordell Conjecture is open for every orbifold curve $(X,\Delta)$ of general type such that $\Delta$ only has finite multiplicities and the genus of $X$ is at most one. 

We note that the abc conjecture implies Conjecture \ref{conj:orb_mor}; see \cite[Theorem~5.3]{Smeets} (which builds on \cite{AbramovichBirGeom} and \cite[Remarque~4.8]{Ca05}, and generalizes Elkies' theorem that abc implies Mordell \cite{Elkies}).

\begin{theorem}[abc implies Orbifold Mordell]\label{thm:abc_implies_orb_mordell}
Assume that the abc conjecture over number fields holds. Then the Orbifold Mordell Conjecture holds.
\end{theorem}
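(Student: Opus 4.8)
The plan is to deduce the Orbifold Mordell Conjecture for a general-type orbifold curve $(X,\Delta)$ from the height inequality for curves that abc is known to imply, following \cite[Theorem~5.3]{Smeets}. Write $\Delta = \sum_i (1-\tfrac{1}{m_i})D_i$ with the $D_i$ distinct closed points of the smooth projective curve $X/K$ of genus $g$, and set $D=\sum_i D_i$ for the reduced boundary. General type means $\deg(K_X+\Delta) = 2g-2+\sum_i(1-\tfrac{1}{m_i}) > 0$, which on a curve is equivalent to $K_X+\Delta$ being ample. After replacing $K$ by a finite extension $L$ and passing to a model over a regular $\mathbb{Z}$-finitely generated subring $R\subset L$, an orbifold map $\Spec R \to (\mathcal{X},\sum_i(1-\tfrac{1}{m_i})\mathcal{D}_i)$ amounts to a point $P\in X(L)$ such that, at every prime $\mathfrak{p}$ of $R$ at which $P$ specializes into $D_i$, the intersection multiplicity is at least $m_i$. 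The goal is to bound the height of every such $P$ and invoke Northcott's theorem.

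The key external input, generalizing Elkies' theorem that abc implies Mordell \cite{Elkies}, is the one-dimensional case of Vojta's inequality: for every $\epsilon>0$ there is a constant such that for all $P\in X(L)\setminus\supp D$,
\[
h_{K_X+D}(P) \;\leq\; \sum_i N^{(1)}_{D_i}(P) + \epsilon\, h(P) + O_\epsilon(1),
\]
where $N^{(1)}_{D_i}$ is the truncated counting function recording, with multiplicity one, the primes at which $P$ meets $D_i$. Over $\mathbb{P}^1$ with $D=[0]+[1]+[\infty]$ this is precisely the abc inequality; the passage to an arbitrary curve is Elkies' Belyi-map argument, and is exactly why abc rather than Faltings is needed once $g\leq 1$.

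Now I exploit the orbifold tangency condition. Since every prime at which $P$ meets $D_i$ does so with multiplicity at least $m_i$, the truncated and full counting functions satisfy $N^{(1)}_{D_i}(P) \leq \tfrac{1}{m_i}N_{D_i}(P) \leq \tfrac{1}{m_i}h_{D_i}(P) + O(1)$. Substituting into the displayed inequality and using $h_{K_X+D} = h_{K_X} + \sum_i h_{D_i}$ gives
\[
h_{K_X+\Delta}(P) = h_{K_X}(P) + \sum_i\bigl(1-\tfrac{1}{m_i}\bigr)h_{D_i}(P) \;\leq\; \epsilon\, h(P) + O_\epsilon(1).
\]
Because $K_X+\Delta$ is ample on the curve $X$, its height dominates, so $h_{K_X+\Delta}(P) \geq c\,h(P) - O(1)$ for some $c>0$; choosing $\epsilon<c$ forces $h(P)=O(1)$. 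Thus the orbifold points have bounded height over the fixed field $L$, and Northcott's theorem yields finiteness. Components $D_i$ with $m_i=\infty$ impose that $P$ avoid $D_i$, so $N^{(1)}_{D_i}(P)=0$ and the term $h_{D_i}(P)\geq -O(1)$ is absorbed harmlessly on the favourable side.

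The main obstacle is the height inequality itself: reducing an arbitrary general-type orbifold curve to the abc inequality requires Elkies' Belyi-map technique, together with its quantitative refinements, to produce the truncated second-main-theorem estimate with the crucial multiplicity-one truncation — it is precisely this truncation, interacting with the orbifold multiplicities $m_i$, that makes the general-type hypothesis $\sum_i(1-\tfrac{1}{m_i}) > 2-2g$ decisive. The remaining bookkeeping — translating orbifold maps out of a model over $R$ into the uniform tangency estimate $N^{(1)}_{D_i}\leq\tfrac{1}{m_i}N_{D_i}$, and allowing $L$ and $R$ to range over all finite extensions and all $\mathbb{Z}$-finitely generated subrings — is routine, and is carried out in \cite[Theorem~5.3]{Smeets}, building on \cite{AbramovichBirGeom} and \cite[Remarque~4.8]{Ca05}.
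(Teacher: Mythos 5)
Your proposal is correct and takes essentially the same route as the paper: the paper offers no proof of this theorem, justifying it solely by citation to Smeets (Theorem 5.3 there, building on Elkies' Belyi-map argument and Campana's Remarque 4.8), and your sketch — abc gives the truncated Vojta inequality for curves, the orbifold tangency condition gives $N^{(1)}_{D_i}(P) \leq \tfrac{1}{m_i}N_{D_i}(P) + O(1)$, and ampleness of $K_X+\Delta$ then forces bounded height and finiteness via Northcott — is a faithful outline of exactly that cited argument, deferring the same technical inputs to the same sources.
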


With notation as in Theorem \ref{thm1}, since $X_d$ is weakly special (Theorem \ref{thm:enriques_fibration}), the Weakly Special Conjecture predicts that, for every $d \geq 1$, the rational points on the threefold $X_d$ in Theorem~\ref{thm1} are potentially dense (i.e., $X_d$ is arithmetically special). However, choosing $d \geq 5$, this contradicts the Orbifold Mordell Conjecture, and hence abc, because a dense set of $K$-rational points on $X_d$ for some number field $K$ induces a dense (hence infinite) set of $\mathcal{O}_{K,S}$-integral orbifold points on some suitable model of the (general type!) orbifold base of $X_d\to \mathbb{P}^1$ for some finite set of finite places $S$ of $K$:

\begin{alphtheorem}[Proved in Section \ref{section:main_results}]\label{thm:abc}
Assume the Orbifold Mordell Conjecture (Conjecture \ref{conj:orb_mor}). Then, for every $d\geq 5$, the smooth projective threefold $X_d$ from Theorem \ref{thm1} is weakly special but, for every number field $K$, the set $X_d(K)$ is not dense.
\end{alphtheorem}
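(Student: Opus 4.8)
The plan is to combine the two main ingredients already assembled in the introduction: the weak specialness of $X_d$ (Theorem~\ref{thm:enriques_fibration}), which is unconditional, and a conditional non-density statement that flows from the Orbifold Mordell Conjecture once $d\geq 5$. The weak specialness is immediate: for every $d\geq 1$, the threefold $X_d$ fibers over $\mathbb{P}^1_{\mathbb{Q}}$ in Enriques surfaces, so Theorem~\ref{thm:enriques_fibration} (with $C=\mathbb{P}^1$, of genus zero) applies verbatim. The real content is therefore the failure of density, and I would structure its proof around a single reduction: a dense set of $K$-points on $X_d$ would produce an infinite set of orbifold integral points on a model of the orbifold base $(\mathbb{P}^1_{\mathbb{Q}},\tfrac12[a_1]+\cdots+\tfrac12[a_d])$, which is of general type for $d\geq 5$, contradicting Orbifold Mordell.

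First I would fix a number field $K$ and assume for contradiction that $X_d(K)$ is dense. I would spread $X_d\to\mathbb{P}^1_{\mathbb{Q}}$ out to a morphism of flat finite-type $R$-schemes $\mathcal{X}_d\to\mathbb{P}^1_R$ over a suitable regular $\mathbb{Z}$-finitely generated ring $R\subset K$ (enlarging the base field and inverting finitely many primes as needed, i.e.\ passing to $\mathcal{O}_{K,S}$-integral points on a model), so that $K$-rational points of $X_d$ become $R$-integral sections of $\mathcal{X}_d$. The key step, which is exactly the mechanism \cite{Ca04,Ca11} was designed to supply, is that such an integral section, composed with $\mathcal{X}_d\to\mathbb{P}^1_R$, is forced to be an \emph{orbifold} integral point of the model of the orbifold base: because the fibre of $X_d$ over each $a_j$ is inf-multiple of inf-multiplicity two, any section meeting the divisor $[a_j]$ must do so with multiplicity divisible by (at least) $2$, which is precisely the condition $f^*D_i\geq m_i E$ defining an orbifold morphism with $m_i=2$. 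I would verify this tangency condition component-by-component, invoking the definition of $m_f(D)$ as the inf-multiplicity and the fact that an $R$-point of the total space lifts through the normalization/resolution used to compute $\Delta_f$.

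Next I would argue that density upstairs forces infinitude downstairs. Since the generic fibre is positive-dimensional, the image of a Zariski-dense set of points in $X_d$ under $f$ cannot be contained in finitely many fibres; hence the induced set of orbifold integral points on $(\mathbb{P}^1,\Delta_f)$ is infinite (after possibly further enlarging $R$ and $L/K$ to accommodate all the points, which is harmless as the notion of arithmetic specialness quantifies existentially over such data). On the other hand, the degree computation already recorded in the introduction shows $\deg(K_{\mathbb{P}^1}+\Delta_f)=-2+\tfrac{d}{2}>0$ exactly when $d\geq 5$, so $(\mathbb{P}^1_{\mathbb{Q}},\Delta_f)$ is a smooth proper orbifold curve of general type. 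By the Orbifold Mordell Conjecture (Conjecture~\ref{conj:orb_mor}) this orbifold is Mordellic, so the set of orbifold integral points on any model over any such $R$ is \emph{finite}. This contradicts the infinitude just established, completing the proof that $X_d(K)$ is not dense.

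The step I expect to be the main obstacle is the precise bookkeeping in the reduction from rational points on the threefold to orbifold integral points on the base: one must be careful that the integral model $\mathcal{X}_d$ is chosen so that its special fibres genuinely realize the inf-multiplicities at each $a_j$ (this is where Lafon's result, guaranteeing inf-multiplicity exactly two and non-divisibility, is used), and that the multiplicity inequality in the definition of orbifold morphism is checked over $R$ and not merely over the generic fibre. Handling the finitely many ``bad'' primes — where the model may degenerate or where the section meets the $f$-exceptional part $R$ of the fibre — requires enlarging $S$ and is the technical heart; but since both the definition of arithmetic specialness and the definition of Mordellic quantify over all such models and rings, these enlargements do not affect the logical structure of the contradiction.
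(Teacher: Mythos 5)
Your proposal is correct and follows essentially the same route as the paper: weak specialness from the Enriques-fibration result, general type of the orbifold base for $d\geq 5$, and the reduction ``dense rational points on $X_d$ $\Rightarrow$ infinite set of orbifold integral points on a model of $(\mathbb{P}^1,\Delta_f)$'' contradicting Mordellic-ness. The paper compresses this last step into the phrase ``it would follow from the definitions'' (see the proof of Corollary \ref{cor:final}, which notes that $X_f\to(C,\frac{1}{2}[f^{-1}(0)])$ is a surjective orbifold morphism), and the bookkeeping you flag as the technical heart — spreading out, and enlarging $S$ so that the $f$-exceptional components (which live over finitely many primes, since there are none over the generic point) disappear from the model — is exactly the intended content of that phrase.
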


Actually, to disprove the Weakly Special Conjecture, we ``only'' need the existence of an integer $d \geq 5$ and algebraic numbers $a_1,\ldots,a_d$ such that the Orbifold Mordell Conjecture holds for the orbifold $(\mathbb{P}^1, \frac{1}{2} [a_1] + \ldots + \frac{1}{2} [a_d])$; see Theorem \ref{thm2} (or Remark \ref{remark:strategy}) for a more precise statement. 

  Since the Weakly Special Conjecture might be false, it is natural to wonder about alternative conjectures. The more plausible conjecture was stated in \cite{Ca04}. To explain this conjecture, let $(X,\Delta)$ be a proper orbifold over a field $k$ of characteristic zero and write $\Delta = \sum_{j \in J} (1-\frac{1}{m_j}) D_j$ (recall that this in particular means that $X$ is a locally factorial variety, so that the $D_j$ are Cartier divisors). A smooth proper orbifold $(X',\Delta')$ is called a \emph{smooth proper model of $(X,\Delta)$}  if there is a proper birational morphism $\psi\colon X'\to X$ with $\psi_\ast \Delta' = \Delta$ such that $(X',\Delta') \to (X,\Delta)$ is ``orbifold'', i.e., for every $j\in J$ and every prime divisor $E$ on $X'$ with $\psi(E) \subseteq D_j$, the coefficient of $E$ in $\psi^*D_j$ is at least $m_j/m_E$, where $m_E$ is the multiplicity of $E$ in $\Delta'$ if $E$ appears in $\Delta'$ and $1$ otherwise. (Note that every proper orbifold over a field of characteristic zero has a smooth proper model by Hironaka's resolution of singularities.) If $(X,\Delta)$ is a proper orbifold, then we say that $(X,\Delta)$ is of \emph{general type} if some smooth proper model $(X',\Delta')$ of $(X,\Delta)$ is of general type (i.e., $K_{X'} + \Delta'$ is big).
Let $f \colon X \ratmap Y$ be a dominant rational map of smooth proper varieties with geometrically connected fibres. 
We say that $f$ is a \emph{fibration of general type} if, for every proper birational morphism $Y'\to Y$ with $Y'$ smooth, the orbifold  base  $(Y',\Delta_{f'})$ of $f'\colon X\ratmap Y'$    is of general type.

\begin{definition}\label{definition:special}  
A smooth proper variety $X$ over $k$ is \emph{special} if $X_{\overline{k}}$ has no fibrations of general type onto a positive-dimensional smooth proper variety.  A proper variety $X$ over $k$ is \emph{special} if some (hence any) smooth proper model $X'$ of $X$ is special. 
\end{definition}

The following conjecture   (see \cite[Conjecture~9.20]{Ca04}) replaces the hypothesis   of ``being weakly special'' in Conjecture \ref{conj} to ``being special''; note that the term weakly special was introduced in \cite{Ca05} precisely to stress the difference between the two notions.

\begin{conjecture}\label{conj:special}
Let $X$ be a smooth projective variety over a number field $K$. Then $X$ is special if and only if there is a number field $L/K$ such that $X(L)$ is dense. 
\end{conjecture}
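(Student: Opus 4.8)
This is a conjecture rather than a theorem, so the most one can offer is a conditional strategy; the plan is to treat its two implications separately, since they have very different characters. For the implication $(\Leftarrow)$, that potential density forces $X$ to be special, I would run in reverse the very mechanism used in Theorem~\ref{thm:abc}. Suppose $X$ is not special, so that $X_{\overline{K}}$ admits a fibration of general type $f\colon X \ratmap Y$ with orbifold base $(Y,\Delta_f)$ of general type and $\dim Y \geq 1$; after a finite extension of $K$ we may assume $f$ is defined over $K$. A Zariski-dense set of $L$-points on $X$ would, by the orbifold functoriality of \cite{Ca04,Ca11} (the same input that sends $K$-points of $X_d$ to $\OO_{K,S}$-integral orbifold points of its base), produce a Zariski-dense set of orbifold-integral points on a model of $(Y,\Delta_f)$. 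When $\dim Y = 1$ this contradicts the Orbifold Mordell Conjecture (Conjecture~\ref{conj:orb_mor}); in general it contradicts the higher-dimensional orbifold Lang--Vojta conjecture. This direction is therefore conditional but clean: it reduces to orbifold Lang--Vojta, and is unconditional whenever the core fibration has a one-dimensional general type base.

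For the converse $(\Rightarrow)$, that special varieties are potentially dense, I would rely on Campana's structure theory. The idea is that a special variety is, after a finite \'etale cover and in the orbifold sense, assembled from a tower of fibrations whose general orbifold fibres are either rationally connected or have numerically trivial orbifold canonical class. Potential density holds for abelian varieties, and is expected---and known in many cases---for rationally connected varieties and for K3 and Enriques surfaces. The strategy would then be to propagate potential density up the tower, proving that an orbifold fibration with potentially dense base and potentially dense general orbifold fibre has potentially dense total space; this demands an orbifold enhancement of the usual ``spreading out sections'' argument that respects the tangency conditions imposed by $\Delta_f$ along the multiple fibres.

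The main obstacle is this converse, and it has two independent difficulties. First, the structure theorem that exhibits a special variety as such a tower of rationally connected and orbifold-$K$-trivial pieces itself rests on Iitaka/$C_{n,m}$-type conjectures and on the orbifold minimal model program, so the decomposition is not unconditionally available. Second, even granting the decomposition, potential density for the orbifold-$K$-trivial fibres is open, and propagating density through a fibration in a way that tracks the orbifold multiplicities, rather than merely the underlying variety, is delicate. By contrast the $(\Leftarrow)$ direction is comparatively understood---it is precisely the arithmetic obstruction this paper realises in the non-special case---so I would expect near-term progress to be conditional, and would first make the propagation step rigorous in low dimensions, where the structure theory for special surfaces is a theorem.
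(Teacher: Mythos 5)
The statement you were asked about is not a theorem of the paper: it is Campana's conjecture (quoted from [Ca04, Conjecture~9.20]), stated without proof, and its role in the paper is only to serve as the plausible ``corrected'' version of the Weakly Special Conjecture that the paper conditionally disproves. So there is no proof in the paper to compare against, and you were right to treat the task as outlining a conditional strategy. Your sketch of the implication that potential density forces specialness does mirror the one mechanism the paper actually implements: a dense set of rational points on the total space of a fibration of general type projects, by the orbifold functoriality of [Ca04, Ca11], to a dense set of orbifold-integral points on a model of the orbifold base, contradicting Orbifold Mordell when the base is a curve (this is exactly Theorem~\ref{thm:abc} and Corollary~\ref{cor:final}), and requiring orbifold Lang--Vojta in higher dimension. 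Your account of the converse direction --- structure theory, towers of rationally connected and orbifold-$K$-trivial fibres, propagation of potential density --- and of why it is out of reach is also a fair summary.

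There is, however, one concrete error: your claim that the first direction ``is unconditional whenever the core fibration has a one-dimensional general type base.'' It is not. Orbifold Mordell (Conjecture~\ref{conj:orb_mor}) is itself open precisely in the relevant cases: as the paper states, it is unknown for every orbifold curve of general type whose divisor has only finite multiplicities and whose underlying curve has genus at most one --- and these are exactly the orbifold bases $(\mathbb{P}^1, \frac{1}{2}[a_1]+\cdots+\frac{1}{2}[a_d])$ that arise from inf-multiple fibres. Faltings' theorem settles only the cases where the curve has genus at least two or the multiplicities are all infinite. If your claim were true, the paper's Theorem~\ref{thm:ws_contradicts_abc} would be an unconditional disproof of the Weakly Special Conjecture, whereas the entire point of the paper is that the disproof is conditional on Orbifold Mordell (equivalently, follows from abc). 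So in your strategy the $(\Leftarrow)$ direction remains conditional even when $\dim Y = 1$, unless the orbifold base happens to fall in the Faltings range.
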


Smooth projective rationally connected varieties are special \cite[Theorem~3.22]{Ca04}. Furthermore, abelian varieties, K3 surfaces, and Enriques surfaces are special. In fact, all smooth projective varieties of Kodaira dimension zero are special; see \cite[Theorem~5.1]{Ca04}. If $X \to Y$ is a finite \'etale morphism of smooth projective varieties, then $X$ is special if and only if $Y$ is special \cite[Theorem~5.12]{Ca04}. Therefore, any special smooth projective variety is weakly special. 

The above conjecture is open for rationally connected varieties and general K3 surfaces, but is known for elliptic K3 surfaces, abelian varieties and Enriques surfaces \cite{HassettSurvey, HassettTschinkel, BT1, BT2, BT3}. 

The notion of a special variety is intimately related to the notion of the ``core map'' of a variety. Let $X$ be a smooth projective variety over $\mathbb{C}$. In \cite{Ca04} it is shown that there is a unique almost holomorphic fibration of general type $f\colon X\ratmap Y$ such that the general fibre of $f$ is a special variety. The orbifold base $(Y,\Delta_f)$ of this fibration is called the \emph{core} of $X$, and it is unique up to birational modification of $(Y,\Delta_f)$. Observe that $X$ is special if and only if its core is trivial.  The core   map splits $X$ into two parts with opposite geometry: the fibres (special) and the orbifold base (general type).  This splitting of $X$ is conjecturally also   reflected in the arithmetic and complex-analytic properties of $X$.

In \cite{BT}, Bogomolov--Tschinkel construct examples of weakly special threefolds. These threefolds do not have a dense set of rational points over any number field, assuming Vojta's higher-dimensional abc conjecture \cite{VojtaIMRN}; this can be deduced from \cite[Corollary~3.3]{AbramovichVA} and the fact that the Bogomolov--Tschinkel threefolds come with an elliptic fibration whose orbifold base is of general type. 
However, the non-density of rational points on their threefolds (and the examples constructed afterwards in \cite{CP07} and \cite{RTW})  seemingly cannot be proved,  assuming only the abc conjecture. Indeed, these threefolds all have a two-dimensional core (see \cite[Definition~3.1]{Ca04}), so that the abc conjecture does not seem to suffice to prove the nondensity of integral points on the relevant orbifold base. The advantage of the threefolds $X_d$ constructed in Theorem \ref{thm1} is that their core is one-dimensional for every $d\geq 5$.

Since K3 surfaces and Enriques surfaces are special \cite[Theorem~5.1]{Ca04}, Theorems \ref{thm1} and \ref{thm1:K3_version} show that the specialization of a smooth projective special variety in a flat family can be inf-multiple but non-divisible (i.e., in the terminology of \cite{Ca05}, the classical orbifold base and the non-classical orbifold base do not necessarily agree), thereby answering a question raised in \cite[Question~1.4]{Ca05}. (Since inf-multiple degenerations of rationally connected varieties do not exist and those of abelian varieties must be divisible \cite[Remarque~1.3]{Ca05}, our examples in K3 surfaces and Enriques surfaces are the ``smallest'' possible in a sense.)

Complex-analytic and function field analogues of the Weakly Special Conjecture were considered and disproved in \cite{BJR, CP07, CWKoba, RTW}. These disproofs were based on variants of the construction in \cite{BT} and are quite difficult, as the core of the weakly special threefolds in all these examples is two-dimensional. Since the hyperbolicity of orbifold curves of general type is considerably easier to prove and our weakly special threefolds $X_d$ have a one-dimensional core for $d\geq 5$, we can give much simpler disproofs; see Theorem \ref{thm:ws_analytic} for a precise statement. 

In Section \ref{section:fundamental_group} we show that Lafon's Enriques surface is non-isotrivial (see Theorem \ref{thm:lafon_is_non_isotrivial}).
We then determine the fundamental group of the threefolds we constructed; see Theorem \ref{thm:lafon_pi1} for a precise statement. Most notable here is the route we take to prove non-isotriviality of Lafon's Enriques surface. We use non-liftability of local points to conclude that a K3 double cover never extends to a finite \'etale cover of any regular projective model (which implies non-isotriviality). However, quite surprisingly, the issue here is not the inf-multiple fibre at $t=0$. In fact, in Section \ref{section:K3}, we prove that there is a K3 double cover of Lafon's Enriques surface which extends to a finite \'etale cover near $t=0$; this is how we prove Theorem \ref{thm1:K3_version}.

 For the reader’s convenience, we summarize in the diagram below all (possibly conjectural) implications among the relevant properties and conjectures.

\bigskip
\begin{tikzpicture}[>=Implies]
  \node[align=center] (Z) at (0,0) {rational points on $X$\\are potentially dense};
  \node (WS) at (10,0) {$X$ is weakly special};
  \node (S) at (5.5,0) {$X$ is special};
  \node (L) at (5,2.5) {Lang's conjecture};

  \draw[double distance=2pt, ->] (Z) .. controls ($(Z)+(1,2)$) and ($(WS)+(-1,2)$) .. (WS);
  \draw[double distance=2pt, ->] (S) to (WS);
  \draw[double distance=2pt, <->, dash pattern=on 6pt off 2pt] (Z) -- node[midway, above] {{\tiny Conjecture~\ref{conj:special}}} (S);
  \draw[double distance=2pt, <-, bend right=25, postaction={decorate, decoration={markings, mark= at position 0.5 with {\node[scale=2] {$\times$};}}}] (S) to node[midway,below,yshift=-3pt] {\tiny \cite{BT}} (WS);
  \draw[double distance=2pt, ->] ($(L.south)+(0,0.1)$) -- (5,1.75);

  \draw[double distance=2pt, <-, name path=F] (Z) .. controls ($(Z)+(1,-2)$) and ($(WS)+(0,-2)$) .. (WS);
  \path[name path=V] (5,-3) -- (5,1);
  \path [name intersections={of=F and V, by=I}];
  \node at (I) [scale=2] {$\times$};
  \node (O) at ($(I)+(0,-1)$) {Orbifold Mordell conjecture};
  \draw[double distance=2pt, ->] ($(O.north)+(0,-0.045)$) -- (5,-1.75);
  \node (ABC) at ($(O)+(5,0)$) {abc conjecture};
  \draw[double distance=2pt, ->] (ABC) to (O);
\end{tikzpicture}

\begin{con}
A variety over a field $k$ is a geometrically integral finite type separated $k$-scheme. 
\end{con}

\begin{ack}
We thank the referee for their helpful comments.
The third-named author is most grateful to Erwan Rousseau for many inspiring discussions on special varieties.
\end{ack}

\section{Weakly special varieties}

The goal of this section is to show that the total space of a family of weakly special varieties over a weakly special base with no divisible fibres in codimension one is itself weakly special (see Theorem \ref{thm:ws} below for a precise statement). 
 
\begin{lemma}\label{lemma:stein}   
Let $X \to Y$ be a proper surjective morphism of integral regular noetherian schemes over~$\mathbb{Q}$. Suppose that for every point $y$ of codimension one and every connected component $F$ of $X_y$ the divisor $F$
on $X \times_Y \Spec(\mathcal{O}_{Y,y})$
is not divisible. Then, the Stein factorization $Y' \to Y$ of $X \to Y$ is finite \'etale.  
\end{lemma}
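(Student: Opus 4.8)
The plan is to analyse the Stein factorization $X \xrightarrow{g} Y' \xrightarrow{h} Y$, where $g$ is proper with $g_\ast \mathcal{O}_X = \mathcal{O}_{Y'}$ (so that $g$ has geometrically connected fibres) and $h$ is finite. Since $Y$ is integral and $h$ is finite and dominant, $Y'$ is integral and $K(Y')/K(Y)$ is a finite field extension, which is automatically separable because everything is over $\mathbb{Q}$; thus $h$ is generically \'etale. The goal is to upgrade this to: $h$ is \'etale everywhere. For this I would invoke Zariski--Nagata purity of the branch locus: for a finite surjective morphism $h\colon Y'\to Y$ with $Y$ regular and $Y'$ normal, the locus in $Y$ over which $h$ fails to be \'etale is either empty or of pure codimension one. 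Hence it suffices to prove that $h$ is \'etale over every codimension-one point $y$ of $Y$, i.e.\ that every $y'\in h^{-1}(y)$ is unramified over $y$ (the residue extensions are separable for free).

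Two preliminary points must be secured before the purity argument applies. First, $Y'$ is normal: since $X$ is regular, hence normal, the dominant morphism $g$ factors through the normalization $\nu\colon \widetilde{Y'}\to Y'$ by the universal property of normalization, and pushing forward structure sheaves yields canonical inclusions $\mathcal{O}_{Y'} \hookrightarrow \nu_\ast \mathcal{O}_{\widetilde{Y'}} \hookrightarrow g_\ast\mathcal{O}_X = \mathcal{O}_{Y'}$ whose composite is the identity, forcing $\nu$ to be an isomorphism. Second, every codimension-one point $y'$ of $Y'$ lies over a codimension-one point of $Y$ (finite morphisms preserve codimension of points), so the points $y'$ at which \'etaleness must be tested are exactly those lying over the codimension-one points $y$ addressed by the hypothesis.

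The heart of the argument is the codimension-one computation. Fix a codimension-one point $y$ of $Y$ with closure $D_y$; as $Y$ is regular, $D_y$ is a Cartier divisor, locally cut out by a uniformizer $\pi$ of the discrete valuation ring $\mathcal{O}_{Y,y}$. By functoriality of pullback of Cartier divisors, the scheme-theoretic fibre satisfies $X_y = g^\ast h^\ast D_y$. Over the generic point $y$ the $f$-exceptional part $R$ disappears and $X_y = \sum_i a_i (F_i)_y$ as in the excerpt, where the $F_i$ are the components of $X_{D_y}$ dominating $D_y$, with generic points $\xi_i$. Now fix $y'\in h^{-1}(y)$; since $Y'$ is normal and $y'$ has codimension one, $\mathcal{O}_{Y',y'}$ is a discrete valuation ring in which $\pi = u\,\varpi^{e}$ for a uniformizer $\varpi$, a unit $u$, and the ramification index $e = e(y'/y)$. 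Each $F_i$ has $g(\xi_i) = y'$ for a unique such $y'$, and the family $\{F_i : g(\xi_i)=y'\}$ is precisely the connected component $F$ of $X_y$ corresponding to $y'$ under the Stein factorization. Computing the multiplicity $a_i = v_{\xi_i}(\pi)$ and using that $u$ pulls back to a unit at $\xi_i$ gives $a_i = e\cdot v_{\xi_i}(\varpi)$, so $e \mid a_i$ for every $F_i \subseteq F$. Therefore $e$ divides $\gcd_i a_i$; as $F$ is not divisible by hypothesis, $\gcd_i a_i = 1$ and hence $e = 1$. This shows $h$ is unramified, hence \'etale, over every codimension-one point, and purity finishes the proof.

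The main obstacle I anticipate is the bookkeeping in the last paragraph: justifying cleanly that the connected components of the fibre $X_y$ are in bijection with the points $y'\in h^{-1}(y)$ with matching supports (so that the ``not divisible'' hypothesis is applied to exactly the set of multiplicities $\{a_i : g(\xi_i)=y'\}$), and that $g(\xi_i)$ is genuinely the codimension-one generic point $y'$ rather than a deeper point of the fibre of $h$. Both follow from the defining property $g_\ast\mathcal{O}_X = \mathcal{O}_{Y'}$ of the Stein factorization together with the fact that every component of $X_y$ dominates $D_y$; but making this precise --- and isolating exactly where characteristic zero enters (separability of residue extensions, so that unramifiedness is equivalent to $e=1$ and no tameness subtleties arise) --- is where the care is needed.
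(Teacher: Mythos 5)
Your proposal is correct and takes essentially the same route as the paper: Zariski--Nagata purity reduces \'etaleness of $Y' \to Y$ to codimension-one points of $Y$, and there the ramification index $e(y'/y)$ divides every multiplicity $a_i$ of the components of the connected component of $X_y$ lying over $y'$, so non-divisibility forces $e=1$. The paper phrases the key computation more compactly as the divisor identity $E = e \cdot X_{y'}$ (with $X_{y'}$ the scheme-theoretic fibre of $X \to Y'$), which is exactly your valuation calculation $a_i = e \cdot v_{\xi_i}(\varpi)$.
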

\begin{proof}
Note that $Y' \to Y$ is finite surjective and that $Y'$ is an integral normal noetherian scheme. Therefore, by Zariski-Nagata purity \cite[Theorem~XI.3.1]{SGA1}, we may and do assume that $Y = \Spec R$ is the spectrum of a discrete valuation ring $R$. Suppose that $y'$ is a point of $Y'$ lying over the closed point $y$ of $Y$ with ramification index $e$. We have to show that $e=1$.  To do so, let $E \subset X$ be the connected component of $X_y$ lying over $y'$. (Note that $E$ exists.) Observe that $E$, as a divisor, equals $e \cdot X_{y'}$ where $X_{y'}$ is the scheme-theoretic fibre of $X \to Y'$ over $y'$. Since $E$ is not divisible (by assumption), it follows that $e=1$. 
\end{proof}
 
\begin{remark} 
Note that Lemma \ref{lemma:stein} follows from \cite[Tag~0BUN]{stacks-project} if the fibres of $X\to Y$ are geometrically reduced. The fact that Lemma \ref{lemma:stein} holds under the weaker assumption that $X \to Y$ has no divisible fibres in codimension one suggests that non-divisible fibres are quite similar to geometrically reduced (or even smooth) fibres. For example, at the level of fundamental groups, a fibration with non-divisible fibres induces a homotopy exact sequence (similar to the exact sequence associated to a smooth fibration); see Lemma \ref{lemma:nori} for a precise statement. 
\end{remark}

\begin{theorem} \label{thm:ws}  
Let $X$ and $Y$ be smooth projective varieties over an algebraically closed field $k$ of characteristic zero. Let $X \to Y$ be a surjective morphism with connected fibres. Suppose that for every $y$ in $Y$ of codimension one, the fibre $X_y$ is not divisible, that $Y$ is weakly special, and that for all $y$ in $Y(k)$ with $X_y$ smooth, the smooth projective variety $X_y$ is weakly special. Then, $X$ is weakly special. 
\end{theorem}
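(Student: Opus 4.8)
The plan is to argue by contradiction directly from Definition \ref{def:ws}. Since $k=\overline{k}$, suppose there is a connected finite étale cover $\pi\colon \tilde X\to X$ together with a dominant rational map $h\colon \tilde X\ratmap Z$ onto a positive-dimensional smooth projective variety $Z$ of general type; we must derive a contradiction. The first step is to descend the fibration $X\to Y$ along $\pi$. Consider the proper surjective morphism $\tilde f=f\circ\pi\colon \tilde X\to Y$ and its Stein factorization $\tilde X\to Y'\xrightarrow{\,q\,}Y$. Because $\pi$ is étale, the scheme-theoretic fibre $\tilde X_y$ is the étale pullback of $X_y$, so étale invariance of multiplicities shows that every connected component of $\tilde X_y$ surjects, via a finite étale map, onto the connected fibre $X_y$ and therefore carries exactly the multiplicities occurring in $X_y$. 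As $X\to Y$ has connected fibres, the hypothesis that $X_y$ is non-divisible for every codimension-one $y$ thus transfers to $\tilde f$, and Lemma \ref{lemma:stein} applies to give that $q$ is finite étale. Consequently $Y'$ is a connected smooth projective variety; since a finite étale cover of a weakly special variety is again weakly special (compositions of finite étale morphisms are finite étale), $Y'$ is weakly special. Moreover, over a general point $y'\in Y'$ the fibre $\tilde X_{y'}$ is a connected finite étale cover of the smooth fibre $X_y$, hence is itself a smooth projective weakly special variety.

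Next I would analyse $h$ through the fibration $p\colon \tilde X\to Y'$ by restricting to a general fibre: set $Z_{y'}:=\overline{h(\tilde X_{y'})}\subseteq Z$. Two cases close immediately. If $h|_{\tilde X_{y'}}$ is dominant, then the weakly special variety $\tilde X_{y'}$ rationally dominates the positive-dimensional general type variety $Z$, contradicting Definition \ref{def:ws}. If instead $Z_{y'}$ is a point for general $y'$ (which happens as soon as $\dim Z_{y'}=0$, since $\tilde X_{y'}$ is connected), then $h$ is constant on the general fibre of $p$ and hence factors as $h=\psi\circ p$ for a dominant rational map $\psi\colon Y'\ratmap Z$, making $Y'$ rationally dominate $Z$ and contradicting the weak specialness of $Y'$.

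The remaining case, $0<\dim Z_{y'}<\dim Z$, is the genuinely hard one, and I expect it to be the main obstacle. Here the $Z_{y'}$ form a covering family of positive-dimensional proper subvarieties of $Z$, each of which is \emph{not} of general type, being the image of the weakly special variety $\tilde X_{y'}$. The plan is to invoke the structure theory of covering families: either the $Z_{y'}$ chain-connect $Z$, in which case $Z$ is assembled from non-general-type subvarieties and cannot itself be of general type, contradicting the choice of $Z$; or they are swept out by the fibres of a nontrivial fibration $r\colon Z\ratmap T$ with $\dim T\geq 1$. In the latter situation each $\tilde X_{y'}$ maps into a single fibre of $r$, so $r\circ h\colon \tilde X\ratmap T$ is constant on the general fibres of $p$ and descends to a dominant map $Y'\ratmap T$; one then runs an induction on $\dim Z$ (replacing $Z$ by $T$) to close the loop against the weak specialness of $Y'$.

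I expect the delicate point throughout the last step to be precisely the gap between \emph{weakly special} and \emph{special}: dominant images of weakly special varieties need not be weakly special, and a fibration out of a general-type variety a priori forces only its \emph{orbifold} base to be of general type. The role of the non-divisibility hypothesis is exactly to convert such orbifold-base phenomena into honest finite étale covers—this is what Lemma \ref{lemma:stein} already secured by keeping $Y'\to Y$ étale—so the crux will be to propagate that mechanism through the inductive reduction, ensuring that every general-type quotient one encounters is witnessed either by the honest cover $Y'$ or by a weakly special fibre, rather than only by an orbifold base.
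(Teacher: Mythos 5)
Your first steps coincide with the paper's own proof: the Stein factorization $\tilde X\to Y'\to Y$, the application of Lemma \ref{lemma:stein} (via \'etale invariance of fibre multiplicities) giving $Y'\to Y$ finite \'etale, hence $Y'$ weakly special and the general fibres of $\tilde X\to Y'$ weakly special; and the two easy cases where $h$ restricted to a general fibre is dominant or constant. The gap is in the remaining case $0<\dim Z_{y'}<\dim Z$, which the paper closes in one line using a classical fact you do not invoke: if $Z$ is a projective variety of general type, then the general member of \emph{any} covering family of positive-dimensional irreducible subvarieties of $Z$ is itself of general type (meaning its smooth projective model is). Granting this, the images $Z_{y'}=\overline{h(\tilde X_{y'})}$ form a covering family, so the general $Z_{y'}$ is of general type, and the weakly special fibre $\tilde X_{y'}$ dominates it --- a contradiction that handles the fibre-dominant case and the intermediate-dimension case uniformly. (The fact follows by passing to the normalized universal family $\mathcal{V}\to T$ with generically finite evaluation map $e\colon \mathcal{V}\to Z$: the canonical divisor of a smooth model of $\mathcal{V}$ dominates $e^{*}K_Z$ up to an effective divisor, so $\mathcal{V}$ is of general type, and the general fibre of a fibration with general type total space is of general type.)

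Your proposed substitute for this fact does not work as stated. In the chain-connected branch you assert that a variety chain-connected by non-general-type subvarieties cannot be of general type; this is not an elementary or freely available fact --- it is essentially equivalent to the covering-family statement above, so you are assuming what needs to be proved. In the quotient branch the argument genuinely fails: the base $T$ of the quotient fibration $r\colon Z\ratmap T$ need not be of general type even though $Z$ is (a surface of general type can fibre over $\mathbb{P}^1$; only the \emph{orbifold} base of such a fibration is forced to be of general type). Hence the descended dominant map $Y'\ratmap T$ contradicts nothing about the weak specialness of $Y'$, and the induction on $\dim Z$ does not close --- this is exactly the weakly special versus special gap you flag yourself in your final paragraph. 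So the proposal is incomplete at its crux; the remedy is the single covering-family lemma, after which no chain-connectedness dichotomy, quotient machinery, or induction is needed.
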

\begin{proof}
Let $X'\to X$ be a finite \'etale morphism with $X'$ a smooth projective variety over $k$. Let $X' \to Y' \to Y$ be the Stein factorization of $X'\to Y$. By Lemma \ref{lemma:stein}, the morphism $Y' \to Y$ is finite \'etale. In particular, since $Y$ is weakly special, it follows that $Y'$ is weakly special. Moreover, the smooth fibres of $X'\to Y'$ are smooth projective weakly special varieties (as each such fibre is a finite \'etale cover of some smooth fibre of $X\to Y$). To conclude the proof, it suffices to show that, given a positive-dimensional smooth projective variety $Z$ of general type and a rational map $g \colon X' \dashrightarrow Z$, the map $g$ is not dominant. 

Assume that $g$ were dominant. If $g$ contracts the general fibre of $X'\to Y'$, it (rationally) factors over $Y'\ratmap Z$. Since $Y'$ is weakly special (as noted above), the map $Y' \ratmap Z$ is not dominant, hence $g$ is not dominant either. Thus, $g$ does not contract the general fibre. Then, the images $g(X'_{y'})$ with $y'$ a general point of $Y'$ form a family of positive-dimensional subvarieties of $Z$ covering $Z$. Since $Z$ is of general type, the general member $g(X'_{y'})$ of this family is of general type.
Thus, the weakly special variety $X'_{y'}$ dominates the positive-dimensional variety $g(X'_{y'})$ of general type, a contradiction. 
\end{proof}

\begin{remark} 
We will apply Theorem \ref{thm:ws} to the case that the fibres of $X \to Y$ are Enriques surfaces. (As we recall in Proposition \ref{prop:enriques_fibration}, a fibration of Enriques surfaces cannot have a divisible fibre.) To do so, we note the simple observation that Enriques surfaces (and K3 surfaces) are weakly special. Indeed, an Enriques surface only has a single nontrivial connected étale covering, namely its K3 double cover. This K3 double cover is simply connected and hence cannot dominate a curve of general type. Moreover, a K3 surface has Kodaira dimension zero and hence cannot dominate a surface of general type, either.
\end{remark}

\begin{remark}\label{remark:K3_construction}  
The assumption that $X \to Y$ has no divisible fibres in Theorem \ref{thm:ws} is essential, even if the smooth fibres are simply connected. Indeed, let $E$ be a K3 surface with a fixed-point free involution $\tau \colon E \to E$, and let $D$ be a genus two curve with hyperelliptic involution $\sigma \colon D \to D$. Note that $(\sigma,\tau) \colon D \times E \to D \times E$ is a fixed-point free involution. Let $X = (D\times E)/(\sigma,\tau)$ and note that the projection $D\times E\to D$ induces a surjective morphism $X\to D/\sigma \cong \mathbb{P}^1$ whose generic fibre is a K3 surface. However, $X$ is not weakly special, as its finite \'etale cover $D\times E$ dominates the positive-dimensional variety $D$ of general type.
\end{remark}

\section{The index and divisible fibres} 
In this section, we use the index of a variety to show that a family of Enriques surfaces cannot have divisible fibres (Proposition \ref{prop:enriques_fibration}).
As a consequence we deduce that the total space of a family of Enriques surfaces over a curve of genus at most one is weakly special (Corollary \ref{cor:enriques_fibration}).

In our proof of Theorem \ref{thm:main_insight} below, we will use the following lemma on multiplicities of fibres of regular models.

\begin{lemma}\label{lemma:local_points}
Let $X$ be a smooth proper connected scheme over $\mathbb{C} \laurent{t}$ and let $\mathcal{X}$ be a regular proper model over $\mathbb{C} \powerseries{t}$. 
Write $\mathcal{X}_0 = \sum_i a_i F_i$ for the decomposition of $\mathcal{X}_0$ into prime divisors, and let $a = \inf(a_i)$. Then the following statements hold.
\begin{enumerate}
\item For every $i$, the set $X(\mathbb{C} \laurent{t^{1/a_i}}) $ is non-empty. 
\item If $m\geq 1$ is a positive integer such that $X(\mathbb{C}\laurent{t^{1/m}})$ is non-empty, then $m$ is a $\mathbb{Z}_{\geq 0}$-linear combination of the $a_i$.
\item If $a < m < 2a$ and $X(\mathbb{C}\laurent{t^{1/m}})$ is non-empty, then $\mathcal{X}_0$ has an irreducible component of multiplicity $m$. 
\item In particular, if $a=2$ and $X(\mathbb{C}\laurent{t^{1/3}}) \neq \emptyset$, then $\mathcal{X}_0$ has an irreducible component of multiplicity $3$ and is thus not divisible. 
\end{enumerate}
\end{lemma}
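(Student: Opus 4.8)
The plan is to base-change along the totally ramified extension $R_m := \mathbb{C}\powerseries{t^{1/m}}$ of $R := \mathbb{C}\powerseries{t}$, whose uniformizer $s := t^{1/m}$ satisfies $t = s^m$, and to exploit the valuative criterion of properness. The single computation underlying everything is the following. Suppose $\sigma \colon \Spec R_m \to \mathcal{X}$ is an $R$-morphism (equivalently an $R_m$-point extending a $K_m$-point of $X$, where $K_m := \mathbb{C}\laurent{t^{1/m}}$), and let $x_0 := \sigma(\text{closed point})$. On the one hand $\ord_{R_m}(\sigma^\ast t) = \ord_{R_m}(s^m) = m$. On the other hand, writing $t = \prod_j u_{i_j}^{a_{i_j}} \cdot w$ in $\mathcal{O}_{\mathcal{X},x_0}$, where the $F_{i_j}$ are the components of $\mathcal{X}_0$ through $x_0$, $u_{i_j}$ is a local equation for $F_{i_j}$, and $w$ is a unit, we get $m = \sum_j a_{i_j} n_j$ with $n_j := \ord_{R_m}(\sigma^\ast u_{i_j}) \in \mathbb{Z}_{\geq 0}$ (the unit $w$ pulls back to a unit, of valuation zero).

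For (1) I would construct such a section directly. Choosing a general closed point $x \in F_i$ at which $F_i$ is smooth and which lies on no other component of $\mathcal{X}_0$, regularity of $\mathcal{X}$ gives $\widehat{\mathcal{O}}_{\mathcal{X},x} \cong \mathbb{C}\powerseries{u,v_1,\dots,v_n}$ (with $n = \dim X$) so that $F_i = V(u)$ and $t = u^{a_i} w$ for a unit $w$. Since $\mathbb{C}$ is algebraically closed of characteristic zero, $w$ admits an $a_i$-th root in $\mathbb{C}\powerseries{u,v_1,\dots,v_n}^\times$ by Hensel's lemma, and after replacing $u$ by $u\,w^{1/a_i}$ we may assume $t = u^{a_i}$. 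The $\mathbb{C}\powerseries{t}$-algebra map $\mathbb{C}\powerseries{u,v_1,\dots,v_n} \to R_{a_i}$ sending $u \mapsto s$ and each $v_k \mapsto 0$ then yields an $R_{a_i}$-point of $\mathcal{X}$ whose generic fibre lies in $X$ (as $u \mapsto s \neq 0$ forces $t \neq 0$), proving $X(K_{a_i}) \neq \emptyset$.

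Statements (2)--(4) are then formal consequences of the identity $m = \sum_j a_{i_j} n_j$. For (2), the given $K_m$-point extends to a section $\sigma$ by the valuative criterion applied to the proper $R$-scheme $\mathcal{X}$ and the discrete valuation ring $R_m$, and the identity exhibits $m$ as a $\mathbb{Z}_{\geq 0}$-linear combination of the $a_i$. For (3), every $a_{i_j} \geq a$, so $m = \sum_j a_{i_j} n_j \geq a \sum_j n_j$; since $m \geq 1$ this gives $\sum_j n_j \geq 1$, and the hypothesis $m < 2a$ forces $\sum_j n_j = 1$. Hence $m = a_{i_j}$ for a single component, so $\mathcal{X}_0$ has a component of multiplicity $m$. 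Part (4) is the case $a = 2$, $m = 3$, which indeed satisfies $a < m < 2a$: one obtains a component of multiplicity $3$ alongside one of multiplicity $2$ (as $\inf a_i = 2$), whence $\gcd\{a_i\} = 1$ and $\mathcal{X}_0$ is not divisible.

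The main obstacle is the section construction in (1): one must verify that a general point of $F_i$ can be chosen to be a regular point of $\mathcal{X}$ lying only on $F_i$ and off the singular locus of $\mathcal{X}_0$, so that the local equation of $t$ really is $u^{a_i}$ times a unit, and one must justify absorbing that unit via an $a_i$-th root using the algebraic closedness of the residue field. Once this is in place, parts (2)--(4) are purely numerical deductions from the single valuation identity, whose verification is routine.
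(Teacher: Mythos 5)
Your proposal is correct, and for parts (ii)--(iv) it is in substance the paper's own argument: your identity $m = \sum_j a_{i_j} n_j$, with $n_j = \ord_{R_m}(\sigma^\ast u_{i_j})$, is exactly the paper's intersection-number computation $m = \epsilon_x \cdot \mathcal{X}_0 = \sum_i (\epsilon_x \cdot F_i)\, a_i$, since the local intersection number $\epsilon_x \cdot F_i$ of the multisection with $F_i$ is precisely the valuation of the pullback of a local equation of $F_i$; your deductions of (iii) and (iv) are the same formal bookkeeping that the paper leaves to the reader (``$(iii)$ follows directly from $(ii)$''). The one genuine divergence is part (i): the paper does not prove it but cites \cite[\S 9.1, Corollary~9]{BLR} (see also Kesteloot--Nicaise and Wittenberg), whereas you reprove it directly --- choose a closed point of $F_i$ that is a regular point of $F_i$ and lies on no other component, write $\widehat{\mathcal{O}}_{\mathcal{X},x} \cong \mathbb{C}\powerseries{u,v_1,\dots,v_n}$ with $t = u^{a_i} w$ via the Cohen structure theorem, absorb the unit $w$ by Hensel's lemma, and send $u \mapsto t^{1/a_i}$, $v_k \mapsto 0$. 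This is essentially the standard proof of the cited result, and the points you flag as needing verification are indeed routine: $\mathcal{X}$ is regular everywhere by hypothesis, the regular locus of the integral finite-type $\mathbb{C}$-scheme $F_i$ is open and dense, the union of the other components meets $F_i$ in a proper closed subset, closed points of the special fibre have residue field $\mathbb{C}$, and Hensel's lemma applies to $T^{a_i} - w$ because we are in characteristic zero with $w$ a unit. So your write-up trades the paper's citation for a self-contained local-algebra construction; nothing is missing, and the numerical parts coincide with the paper's proof.
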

\begin{proof} 
We note that $(i)$ is a special case of \cite[\S 9.1, Corollary~9]{BLR} (see also \cite[Lemma~2.4]{KestelootNicaise} or the proof of \cite[Lemme~4.6]{Wit15}); we stress that $X$ is not necessarily geometrically connected over $K$. To prove $(ii)$, let $x \in X(\mathbb{C}\laurent{t^{1/m}})$. Let $\epsilon_x \in \mathcal{X}(\mathbb{C}\powerseries{t^{1/m}})$ be the corresponding multisection of $\mathcal{X}\to \Spec \mathbb{C}\powerseries{t}$. Now note that
\[ m = \epsilon_x \cdot \mathcal{X}_0 = \epsilon_x \cdot \sum_i a_i F_i = \sum_i (\epsilon_x \cdot F_i) a_i \]
exhibits $m$ as a $\mathbb{Z}_{\geq 0}$-linear combination of the $a_i$, as required. Note that $(iii)$ follows directly from $(ii)$ and that $(iv)$ is a special case of $(iii)$. 
\end{proof}

\begin{remark}
In the situation of the above lemma, under the additional assumption that the reduced special fibre $(\mathcal{X}_0)_{\red}$ has simple normal crossings, one can give a complete answer to the question of when $X(\mathbb{C} \laurent{t^{1/m}})$ is non-empty: Denote by~$I$ the set of irreducible components of~$\mathcal{X}_0$. Then, given an integer $m \in \mathbb{Z}$, the set $X(\mathbb{C}\laurent{t^{1/m}})$ is non-empty if and only if there exists a subset $J \subseteq I$ for which $\bigcap_{i \in J} F_i$ is non-empty and such that $m$ is a $\mathbb{Z}_{>0}$-linear combination of the~$a_i$ for $i \in J$. We will however not use this characterization in the remainder of the paper and refer to \cite[Proposition~2.5]{KestelootNicaise} for a proof.
\end{remark}

\begin{remark}[The inf-multiplicity] \label{remark:inf_multiplicity}
Let $X\ratmap Y$ be a dominant rational map of smooth projective varieties over a field $k$ of characteristic zero.
Let $X_1\to X$ and $X_2\to X$ be proper birational morphisms with $X_1$ and $X_2$ smooth projective varieties such that $X_1\to X\ratmap Y$ and $X_2\to X\ratmap Y$ are (surjective) morphisms.
Let $y\in Y$ be a point of codimension one. We claim that the inf-multiplicity $m_1$ of $(X_1)_y$ equals the inf-multiplicity $m_2$ of $(X_2)_y$. Indeed, to compute $m_1$ and $m_2$, we may replace $Y$ by $\Spec \mathbb{C}\powerseries{t}$. Then, $m_1$ is the smallest positive integer $a$ for which $X_1$ has a $\mathbb{C}\laurent{t^{1/a}}$-point. Since the generic fibre of $X_1\to Y$ is birational to the generic fibre of $X_2\to Y$ and since $X_1$ is smooth over $\mathbb{C}\laurent{t}$ and $X_2$ is proper over $\mathbb{C}\laurent{t}$, it follows from Lang--Nishimura \cite[Theorem~3.6.11]{PoonenRat} that $X_2$ (also) has a $\mathbb{C}\laurent{t^{1/m_1}}$-point. This shows that $m_2\leq m_1$. By symmetry, $m_1=m_2$, as claimed.
\end{remark}

\begin{definition}\label{definition:index}
Let $X$ be a variety over a field $K$. We define the \emph{index of $X$}, denoted by $i(X)$, to be the greatest common divisor of all natural numbers $n$ such that there exists a field extension $L/K$ of degree $n$ with $X(L) \neq \emptyset$.
\end{definition}

\begin{remark}[Index of an Enriques surface]\label{remark:enriques}
Any Enriques surface over $\mathbb{C}\laurent{t}$ has index one. Indeed, more generally, any smooth projective variety $X$ over $\mathbb{C}\laurent{t}$ with $\vert \chi(X,\mathcal{O}_X) \vert = 1$ (e.g., an Enriques variety as defined in \cite{Boissiere} or a smooth proper curve of genus two) has index one by \cite[Theorem~1]{ELW}. 
\end{remark} 

\begin{lemma}\label{lemma:index}
Let $X$ be a smooth proper variety over $\mathbb{C} \laurent{t}$ and let $\mathcal{X}$ be an integral regular proper model over $\mathbb{C} \powerseries{t}$. Write $\mathcal{X}_0 = \sum_i a_i F_i$ with $a_i\in \mathbb{Z}_{\geq 1}$ and $F_i$ an irreducible component of $\mathcal{X}_0$. Then the gcd of the $a_i$ is equal to the index of~$X$ and the infimum of the $a_i$ is equal to the minimal integer $m$ such that $X(\mathbb{C}\laurent{t^{1/m}})\neq \emptyset$.
\end{lemma}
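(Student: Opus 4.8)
The plan is to reduce both equalities to Lemma~\ref{lemma:local_points} after first pinning down the finite extensions of $\mathbb{C}\laurent{t}$. The key structural fact is that since $\mathbb{C}\powerseries{t}$ is a complete discrete valuation ring with algebraically closed residue field $\mathbb{C}$ of characteristic zero, every finite extension $L/\mathbb{C}\laurent{t}$ of degree $n$ is totally and tamely ramified with trivial residue extension, and is therefore isomorphic to $\mathbb{C}\laurent{t^{1/n}}$. (Concretely, any uniformizer of $L$ differs from $t$ by a unit, and every unit of $\mathbb{C}\powerseries{t}$ admits an $n$-th root by Hensel's lemma because $\mathbb{C}$ is algebraically closed.) Consequently, $X(L)\neq\emptyset$ for some extension $L/\mathbb{C}\laurent{t}$ of degree $n$ if and only if $X(\mathbb{C}\laurent{t^{1/n}})\neq\emptyset$, so that
\[ i(X) = \gcd\{\, n\geq 1 : X(\mathbb{C}\laurent{t^{1/n}})\neq\emptyset \,\}, \]
and the minimal integer $m$ with $X(\mathbb{C}\laurent{t^{1/m}})\neq\emptyset$ is just the minimum of this same (nonempty) set.

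For the infimum claim, write $a=\inf_i a_i$. Applying Lemma~\ref{lemma:local_points}(i) to an index $i$ with $a_i=a$ gives $X(\mathbb{C}\laurent{t^{1/a}})\neq\emptyset$, so $m\leq a$. Conversely, if $X(\mathbb{C}\laurent{t^{1/m}})\neq\emptyset$, then Lemma~\ref{lemma:local_points}(ii) lets me write $m=\sum_i c_i a_i$ with $c_i\in\mathbb{Z}_{\geq 0}$; since $m\geq 1$, some $c_{i_0}\geq 1$, whence $m\geq a_{i_0}\geq a$. Thus $m=a$.

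For the index claim, set $d=\gcd_i a_i$. Lemma~\ref{lemma:local_points}(i) shows that each $a_i$ lies in the set $\{\, n : X(\mathbb{C}\laurent{t^{1/n}})\neq\emptyset \,\}$, so its gcd $i(X)$ divides every $a_i$ and hence $i(X)\mid d$. In the other direction, Lemma~\ref{lemma:local_points}(ii) shows that every element of that set is a nonnegative integer combination of the $a_i$ and is therefore divisible by $d$; hence $d$ divides the gcd $i(X)$, giving $d\mid i(X)$. Combining the two divisibilities yields $i(X)=d$.

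The only genuinely non-formal step is the opening reduction, namely identifying all finite extensions of $\mathbb{C}\laurent{t}$ with the fields $\mathbb{C}\laurent{t^{1/n}}$ and thereby rewriting the index purely in terms of root-of-$t$ extensions. Once this is in place, both assertions become immediate minimality and divisibility arguments fed by the two parts of Lemma~\ref{lemma:local_points}, and I expect no further obstacles.
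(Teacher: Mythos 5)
Your proposal is correct and follows exactly the route the paper intends: the paper's own proof consists of the single sentence that the statements are ``elementary consequences of Lemma~\ref{lemma:local_points}'', and your argument is precisely the elementary deduction being alluded to (classify the finite extensions of $\mathbb{C}\laurent{t}$ as the fields $\mathbb{C}\laurent{t^{1/n}}$, then feed parts (i) and (ii) of Lemma~\ref{lemma:local_points} into the minimality and divisibility arguments). In fact your write-up supplies the details the paper omits, including the one genuinely non-formal step of rewriting the index in terms of root-of-$t$ extensions.
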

\begin{proof}
These statements are elementary consequences of Lemma \ref{lemma:local_points}.
\end{proof}

In particular, the previous lemma implies that both the inf-multiplicity and the gcd-multiplicity of the special fibre can be read off of the generic fibre; note that the following proposition fails for K3 surfaces (Remark \ref{remark:K3_construction}).

\begin{proposition}\label{prop:enriques_fibration}
Let $X \to Y$ be a proper surjective morphism of integral normal noetherian schemes over~$\mathbb{Q}$. Suppose that $X$ is regular and that the generic fibre of $X \to Y$ is an Enriques surface. Then, the fibration $X \to Y$ has no divisible fibres in codimension one. 
\end{proposition}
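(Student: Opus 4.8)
The statement is local on $Y$ at a point $y$ of codimension one, so I would fix such a $y$ and aim to show that the fibre $X_y$ is not divisible. First I would replace $Y$ by the spectrum of the local ring $R = \mathcal{O}_{Y,y}$; since $Y$ is integral, normal and noetherian and $y$ has codimension one, $R$ is a discrete valuation ring, with residue field $\kappa$ and fraction field $K(Y)$. The base change $X_R = X \times_Y \Spec R$ is then a localization of $X$, hence integral and regular; it is proper over $R$, and it is flat over $R$ because it is integral and dominates $\Spec R$. Its special fibre is exactly $X_y$, so, writing $X_y = \sum_i a_i F_i$, the task is to prove that $\gcd_i(a_i) = 1$.

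The plan is to reduce to the situation of Lemma \ref{lemma:index} over $\CC\powerseries{t}$ and then invoke the fact that an Enriques surface over $\CC\laurent{t}$ has index one (Remark \ref{remark:enriques}). To do so I would first pass to the completion $\widehat{R}$; since we are in equal characteristic zero, the Cohen structure theorem gives $\widehat{R} \cong \kappa\powerseries{t}$, and the base change $X_{\widehat{R}}$ is still integral, regular and proper over $\widehat{R}$ with special fibre equal to $X_y$ (completion has ramification index one and the same residue field $\kappa$). Next, as $\kappa$ is finitely generated over $\QQ$ it is countable and admits an embedding $\kappa \hookrightarrow \CC$; I would then base change along the induced flat local homomorphism $\kappa\powerseries{t} \to \CC\powerseries{t}$, which has the same uniformizer $t$ and separable, hence geometrically regular, residue field extension $\kappa \hookrightarrow \CC$, so that it is a regular morphism. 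Consequently $\mathcal{X} := X_{\CC\powerseries{t}}$ is regular; it is proper, and it is integral because it is normal, flat over $\CC\powerseries{t}$, and has connected generic fibre (the base change of the geometrically connected Enriques surface $X_\eta$). Thus $\mathcal{X}$ is an integral regular proper model over $\CC\powerseries{t}$ whose generic fibre is an Enriques surface over $\CC\laurent{t}$.

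Now I would finish as follows. By Remark \ref{remark:enriques} the generic fibre of $\mathcal{X}$ has index one, so Lemma \ref{lemma:index} shows that the gcd of the multiplicities of the components of $\mathcal{X}_0$ equals one. Because every base change above has ramification index one, the special fibre $\mathcal{X}_0$ is obtained from $X_y$ by a flat base change that only splits components without altering their multiplicities; hence the multiset of multiplicities occurring in $\mathcal{X}_0$ has the same gcd as that of $X_y$, giving $\gcd_i(a_i) = 1$ as desired. The main point to check carefully is the bookkeeping of the base changes — specifically that regularity and integrality survive the passage to $\CC\powerseries{t}$ and that the multiplicities $a_i$ are genuinely preserved (and not merely divided or multiplied) — which is exactly where the ramification-index-one observation does the work.
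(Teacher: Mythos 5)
Your overall strategy is exactly the paper's: localize at $y$ to get a DVR, pass to the completion, transport the whole situation to $\CC\powerseries{t}$, and then conclude by combining the index-one statement for Enriques surfaces over $\CC\laurent{t}$ (Remark \ref{remark:enriques}) with the equality of the gcd-multiplicity and the index (Lemma \ref{lemma:index}). The paper's proof is just a compressed version of this: it replaces $\mathcal{O}_{Y,y}$ by its completed strict henselization, observes that this ring is $k\powerseries{t}$ with $k$ algebraically closed of characteristic zero, and then says ``we may assume $k=\CC$''.

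However, there is one genuine error in your reduction: the claim that the residue field $\kappa$ ``is finitely generated over $\QQ$, hence countable''. The proposition only assumes that $Y$ is a \emph{noetherian} scheme over $\QQ$, not that it is of finite type over $\QQ$; for instance $Y$ could be $\Spec K\powerseries{t}$ for an arbitrary field $K$ of characteristic zero, in which case $\kappa = K$ can be uncountable, indeed of cardinality exceeding that of $\CC$, and then no embedding $\kappa \hookrightarrow \CC$ exists at all. The step is fixable, but it needs an idea you did not supply: since $X_{\widehat{R}} \to \Spec \kappa\powerseries{t}$ is of finite type, it is cut out by finitely many equations whose coefficients are power series involving only countably many elements of $\kappa$; hence the whole model descends to $k_0\powerseries{t}$ for some countable subfield $k_0 \subseteq \kappa$, and $k_0$ (being countable of characteristic zero) does embed into $\CC$. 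One then base changes along $k_0\powerseries{t} \to \CC\powerseries{t}$ and runs precisely the bookkeeping you describe — this part of your write-up (the morphism of complete DVRs is regular, so regularity ascends; the ramification index is one and, in characteristic zero, integral components stay reduced after extension of the residue field, so components only split and multiplicities are unchanged; integrality follows from normality, flatness, and irreducibility of the generic fibre) is correct and is more careful than the paper's. So the architecture of your proof is sound and matches the paper's; the flaw is the unjustified (and in general false) finite generation of $\kappa$, which is exactly the point that the paper's terse ``we may assume $k=\CC$'' is silently absorbing.
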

\begin{proof}
Let $y \in Y$ be a point of codimension one. To prove the proposition, we may replace $Y$ by $\Spec \mathcal{O}_{Y,y}$. In addition, we may replace $\mathcal{O}_{Y,y}$ by its completion and strict henselization. Since every residue field of $Y$ is of characteristic zero, it follows that $Y = \Spec k \powerseries{t}$ for some algebraically closed field $k$ of characteristic zero. We may assume that $k=\mathbb{C}$. The result now follows from the fact that an Enriques surface over $\mathbb{C}\laurent{t}$ has index one (Remark \ref{remark:enriques}) and the fact that the gcd of the multiplicities of the special fibre of $X \to Y$ divides the index (Lemma \ref{lemma:index}). 
\end{proof}

Note that Theorem \ref{thm:enriques_fibration} from the introduction is a special case of the following more general statement.

\begin{corollary}\label{cor:enriques_fibration} 
Let $X \to Y$ be a surjective morphism of smooth projective varieties over a field $k$ of characteristic zero. If $Y$ is weakly special (e.g., a smooth projective curve of genus at most one) and the generic fibre of $X \to Y$ is an Enriques surface, then $X$ is weakly special. 
\end{corollary}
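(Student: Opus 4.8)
The plan is to deduce the corollary from Theorem~\ref{thm:ws} by verifying its hypotheses, after first reducing to the case where $k$ is algebraically closed. Since weak specialness is, by Definition~\ref{def:ws}, a property of the base change to $\overline{k}$, and since $X_{\overline{k}} \to Y_{\overline{k}}$ is again a surjective morphism of smooth projective varieties over $\overline{k}$ (both integral, by the standing convention that varieties are geometrically integral) whose generic fibre is the base change of an Enriques surface, hence still an Enriques surface, I may replace $k$ by $\overline{k}$ and assume $k = \overline{k}$ throughout. Note that under this replacement the hypothesis ``$Y$ weakly special'' passes directly to $Y_{\overline{k}}$.

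Theorem~\ref{thm:ws} requires the morphism to have connected fibres, which I would establish via Stein factorization. Write $X \to Y' \to Y$ for the Stein factorization, so that $Y' \to Y$ is finite and $X \to Y'$ has connected fibres. Because the generic fibre of $X \to Y$ is an Enriques surface, it is geometrically connected, so $Y' \to Y$ has degree one over the generic point, i.e. it is birational; as $Y$ is normal and $Y' \to Y$ is finite and birational, Zariski's main theorem forces $Y' \to Y$ to be an isomorphism. Hence $X \to Y$ already has connected fibres.

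It remains to check the two substantive hypotheses. For the absence of divisible fibres in codimension one, I would invoke Proposition~\ref{prop:enriques_fibration}: $X$ is regular (being smooth over $k$), $Y$ is integral normal noetherian, the residue characteristics vanish, and the generic fibre is an Enriques surface, so that proposition applies verbatim and yields that $X \to Y$ has no divisible fibres in codimension one. For the fibre hypothesis, let $y \in Y(k)$ be a point with $X_y$ smooth. Over the dense open locus of $Y$ above which $X \to Y$ is smooth, the morphism is a smooth proper family, so each such $X_y$ is a deformation of the generic fibre; since being an Enriques surface is stable under deformation (it is detected by deformation-invariant data such as the Kodaira dimension and the Betti and Hodge numbers), each smooth fibre $X_y$ is again an Enriques surface, and by the Remark following Theorem~\ref{thm:ws} it is therefore weakly special. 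All hypotheses of Theorem~\ref{thm:ws} now hold, and I conclude that $X$ is weakly special. The parenthetical assertion is covered because a smooth projective curve of genus at most one is weakly special: $\mathbb{P}^1$ is rationally connected, hence weakly special, while a genus one curve admits only genus one curves as finite étale covers, none of which can dominate a positive-dimensional variety of general type.

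The main obstacle I anticipate is the connectedness step: one must use precisely the geometric connectedness of the Enriques generic fibre to collapse the Stein factorization, and check that this argument is compatible with the base change to $\overline{k}$ (it is, since an Enriques surface remains geometrically connected over any field extension). The identification of the smooth fibres as Enriques surfaces is standard deformation theory, but it deserves explicit mention, as Theorem~\ref{thm:ws} takes the weak specialness of the smooth fibres as an input rather than deriving it.
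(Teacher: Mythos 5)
Your proof is correct and follows essentially the same route as the paper's: reduce to $k=\overline{k}$, invoke Proposition~\ref{prop:enriques_fibration} for the absence of divisible fibres in codimension one, observe that the smooth fibres are Enriques surfaces and hence weakly special, and conclude by Theorem~\ref{thm:ws}. The only difference is that you explicitly verify the connected-fibres hypothesis via Stein factorization and the deformation-invariance of being an Enriques surface, details the paper leaves implicit.
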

\begin{proof}
We may assume that $k$ is algebraically closed. Since the generic fibre of $X \to Y$ is an Enriques surface, it follows from Proposition \ref{prop:enriques_fibration} that $X \to Y$ has no divisible fibres in codimension one. Note that every smooth fibre of $X \to Y$ is an Enriques surface. In particular, since Enriques surfaces are weakly special (Remark \ref{remark:enriques}) and $Y$ is weakly special (by assumption), the result follows from Theorem \ref{thm:ws}. 
\end{proof}

We emphasize that Corollary \ref{cor:enriques_fibration} demonstrates a key element of our strategy. Namely, we will construct a threefold $X$ and a surjective morphism $X\to \mathbb{P}^1_{\mathbb{Q}}$ whose generic fibre is an Enriques surface and which has at least one inf-multiple fibre (Theorem \ref{thm:lafon}). Such a threefold $X$ is weakly special by Corollary \ref{cor:enriques_fibration}. 

\begin{remark}[Specialization index]\label{remark:spec_index} 
Let $X$ be a smooth proper variety over $\mathbb{C}\laurent{t}$ and $\mathcal{X}$ be a proper regular model over $\mathbb{C}\powerseries{t}$ such that the reduced subscheme underlying the special fibre has simple normal crossings. (Such a model exists by Hironaka's resolution of singularities.) Assume that $\mathrm{H}^i(X,\mathcal{O}_X)=0$ for all $i>0$ (e.g., $X$ is an Enriques surface over $\mathbb{C}\laurent{t}$). Then, by \cite[Theorem~4.6]{KestelootNicaise}, the ``specialization index'' of $X$ equals one, i.e., there is a closed point of the special fibre $\mathcal{X}_0$ such that the gcd of the multiplicities of the irreducible components containing $x$ equals $1$. 
\end{remark}

\section{Lafon's Enriques surface}\label{section:lafon}

In \cite{Lafon} Lafon constructed an explicit counterexample to a question of Serre (which was first answered negatively using indirect methods by Graber--Harris--Mazur--Starr \cite{Graberetal}). Indeed, inspired by the methods and equations introduced in \cite{CTSSD}, Lafon proved the following result. 

\begin{theorem}[Lafon]\label{thm:lafon}
The smooth surface in $\mathbb{A}^4_{\mathbb{Q}(t)}$ with coordinates $(u,x,y,z)$ given by 
\begin{eqnarray*}
x^2 - tu^2 + t &=& (t^2 u^2 - t)y^2 \neq 0 \\
x^2 - 2tu^2 + t^{-1} &= & t(t^2u^2 -t)z^2 \neq 0
\end{eqnarray*} 
is birational over $\mathbb{Q}(t)$ to an Enriques surface $S$ over $\mathbb{Q}(t)$ with $S(\mathbb{C}\laurent{t})=\emptyset$. 
\end{theorem}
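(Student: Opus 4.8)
The plan is to separate the two assertions: (A) the affine surface is birational over $\mathbb{Q}(t)$ to a smooth projective Enriques surface $S$, and (B) $S(\mathbb{C}\laurent{t}) = \emptyset$. Assertion (A) is essentially the geometric content already implicit in the equations of \cite{CTSSD}, whereas (B) is the arithmetic heart and is where I expect the real difficulty to lie.

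For (A) I would first record the covering structure. Setting $A := t^2u^2 - t = t(tu^2 - 1)$ and projecting $(u,x,y,z) \mapsto (u,x)$ presents the affine surface as the fibre product over the $(u,x)$-plane of the two double covers $y^2 = (x^2 - tu^2 + t)/A$ and $z^2 = (x^2 - 2tu^2 + t^{-1})/(tA)$, that is, as a $(\mathbb{Z}/2)^2$-cover of a rational surface for the action $(y,z)\mapsto(\pm y, \pm z)$. After clearing denominators and compactifying the base to $\mathbb{P}^2$ (or $\mathbb{P}^1\times\mathbb{P}^1$) over $\mathbb{Q}(t)$, I would resolve the singularities of this abelian cover to obtain a smooth projective model $S$. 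I would then read off the birational invariants from the cover, using the eigensheaf decomposition of the structure sheaf and the Hurwitz formula for abelian covers to express $K_S$ in terms of the three branch divisors: the outcome to verify is $q(S) = p_g(S) = 0$, $\chi(\mathcal{O}_S) = 1$, and $2K_S \sim 0$ with $K_S \not\sim 0$. By the Enriques--Kodaira classification this identifies $S$ as an Enriques surface. As a cross-check I would use the genus-one fibration $S \to \mathbb{P}^1_u$, whose generic fibre is the intersection of the two quadrics $x^2 - Ay^2 = tu^2 - t$ and $x^2 - tAz^2 = 2tu^2 - t^{-1}$ in $\mathbb{P}^3$, and confirm that it carries exactly two multiplicity-two fibres. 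The delicate geometric point is that resolving the cover must not destroy the Kodaira-dimension-zero invariants; here I would follow the template of \cite{CTSSD} closely.

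For (B) I would work over the local field $F = \mathbb{C}\laurent{t}$. Because $\mathbb{C}$ is algebraically closed, the $t$-adic valuation $v$ induces $F^\times/(F^\times)^2 \cong \mathbb{Z}/2$, the nontrivial class being that of $t$; hence membership in $(F^\times)^2$ is detected purely by parity of valuation. By Lang--Nishimura \cite[Theorem~3.6.11]{PoonenRat}, applied to the smooth proper model $S$ and its smooth dense affine open, it suffices to show that the system has no $F$-point in any chart of $S$; the essential case is the given affine chart, where an $F$-point amounts to $u,x \in F$ making both $g_1 := (x^2 - tu^2 + t)/A$ and $g_2 := (x^2 - 2tu^2 + t^{-1})/(tA)$ nonzero squares in $F$. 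Translating into parities, this requires $v(x^2 - tu^2 + t) \equiv v(A)$ and $v(x^2 - 2tu^2 + t^{-1}) \equiv v(A) + 1 \pmod 2$, so that in particular the two numerators must have opposite valuation parity. I would then run the finite case analysis over the possible values of $(v(u), v(x))$, computing $v(A) = 1 + v(tu^2 - 1)$ and the valuations of the two numerators, and show that these parity constraints cannot be met simultaneously. The carefully chosen coefficients ($t$, $2t$, $t^{-1}$, and the extra factor $t$ in the second equation) are exactly what makes the two conditions mutually exclusive.

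I expect this local case analysis to be the main obstacle. The subtlety is twofold: first, the cases in which the leading terms of $x^2$, $tu^2$ and the constants cancel require extra care, since there the valuation of a numerator can jump and one must track the square class of the unit part, not merely the valuation; second, one must verify that the boundary and the charts at infinity of the chosen proper model $S$ contribute no $F$-points either, so that the affine computation indeed yields $S(F) = \emptyset$. Equivalently, one may rephrase (B) as the statement that a regular proper model of $S$ over $\mathbb{C}\powerseries{t}$ has everywhere non-reduced special fibre (inf-multiplicity at least two), which by Lemma \ref{lemma:local_points}(i) is precisely the vanishing $S(F) = \emptyset$; carrying out either version rigorously is the crux of Lafon's theorem.
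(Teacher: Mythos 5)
Note first that the paper itself contains no proof of this statement: it is Lafon's theorem, imported verbatim from \cite{Lafon} (the defining equations go back to \cite{CTSSD}), and everything the paper does with it takes the statement as a black box. So your proposal has to be judged as a reconstruction of Lafon's argument. Your part (A) — view the surface as a $(\mathbb{Z}/2)^2$-cover of a rational surface, compactify, resolve, and compute $q=p_g=0$, $2K_S\sim 0$, $K_S\not\sim 0$ — is indeed the expected route, and your part (B) parity-of-valuation analysis over $\mathbb{C}\laurent{t}$ is the right kind of argument. But both parts are announced rather than executed, and part (B) contains a genuine structural error.

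The error is that Lang--Nishimura is applied in the wrong direction. Lang--Nishimura \cite[Theorem~3.6.11]{PoonenRat} says: if a smooth integral variety with an $F$-point admits a rational map to a \emph{proper} variety, then the proper variety has an $F$-point. Applied to $U \ratmap S$ it yields $U(F)\neq\emptyset \Rightarrow S(F)\neq\emptyset$ (this is exactly how the paper uses it in Theorem \ref{thm:main_insight}, to produce points on $S$ from explicit affine solutions). It cannot yield the implication you need, namely $U(F)=\emptyset \Rightarrow S(F)=\emptyset$, because $U$ is not proper; and that implication is genuinely false over a general field (a smooth projective curve over $\mathbb{Q}$ with exactly one rational point, minus that point, is a counterexample). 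The correct tool is a property of the field $F=\mathbb{C}\laurent{t}$ itself: $F$ is complete (in particular large), so by the $t$-adic implicit function theorem an $F$-point of the smooth integral variety $S$ has a $t$-adic neighbourhood of $F$-points, which is Zariski dense; hence $S(F)\neq\emptyset$ would force $U(F)\neq\emptyset$, and it does suffice to rule out solutions of the affine system. Your fallback — checking ``the boundary and the charts at infinity'' of $S$ by hand — would presuppose an explicit smooth compactification and is never carried out, so as written the reduction step is missing. Beyond this, the heart of the theorem, the valuation case analysis including the cancellation cases you flag, is only sketched; note also that your worry about ``tracking the square class of the unit part'' is vacuous, since every unit of $\mathbb{C}\laurent{t}$ (or of any finite extension) is a square — the real issue in the cancellation cases is that the valuation of the sum is undetermined, and it is the interplay of the two equations that must exclude these cases.
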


\begin{definition}\label{definition:lafon}
We will refer to the Enriques surface $S$ over $\mathbb{Q}(t)$ in Theorem \ref{thm:lafon} as \emph{Lafon's Enriques surface}.
\end{definition}

The presence of an inf-multiple fibre on a regular model of $S$ over $\mathbb{P}^1_{\mathbb{Q}}$ implies that $S$ does not have a $\mathbb{Q}(t)$-point. In fact, it even implies that $S$ does not have a $\mathbb{C}\laurent{t}$-point (which sometimes is referred to as a ``local point at $t=0$'').

The nonexistence of a $\mathbb{C}\laurent{t}$-point is in fact \emph{equivalent} to an inf-multiple (non-divisible!) fibre of any reduction of $S$ at ``$t = 0$'' (see Lemma \ref{lemma:local_points}). Applying this observation to Lafon's Enriques surface leads to the following result. 

\begin{theorem}\label{thm:main_insight}
Let $X \to \mathbb{P}^1_{\mathbb{Q}}$ be a morphism with $X$ a smooth projective threefold over $\mathbb{Q}$. If the generic fibre $X_{\mathbb{Q}(t)}$ of $X\to \mathbb{P}^1_{\mathbb{Q}}$ is isomorphic to Lafon's Enriques surface $S$ over $\mathbb{Q}(t)$, then the following statements hold.
\begin{enumerate}
\item The scheme-theoretic fibre $X_0$ of $X \to \mathbb{P}^1_{\mathbb{Q}}$ over $0 \in \mathbb{P}^1(\mathbb{Q})$ is an inf-multiple non-divisible fibre of inf-multiplicity two.
\item For every $t_0 \in \mathbb{P}^1(\mathbb{C}) \setminus \{0\}$, the fibre $X_{t_0}$ has a reduced component.
\end{enumerate}
\end{theorem}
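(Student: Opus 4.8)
The plan is to reduce both assertions, via Lemma \ref{lemma:index}, to the existence or non-existence of local points of $S$ over various Laurent series fields, and then to settle each case on Lafon's explicit affine model. After base change to $\mathbb{C}$ and completion of the base at the point $t_0$ under consideration, Lemma \ref{lemma:index} identifies the inf-multiplicity of the corresponding fibre of $X$ with the least integer $m$ for which $S$ acquires a $\mathbb{C}\laurent{(t-t_0)^{1/m}}$-point, and the gcd-multiplicity with the index of $S$ over that field; by Remark \ref{remark:inf_multiplicity} both quantities are intrinsic to $S$ and independent of the model $X$. Since the surface of Theorem \ref{thm:lafon} is smooth and birational to $S$, Lang--Nishimura allows me to search for local points directly on those affine equations.

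For part (1), non-divisibility is immediate: an Enriques surface over $\mathbb{C}\laurent{t}$ has index one (Remark \ref{remark:enriques}), so the gcd-multiplicity is one. The bound inf-multiplicity $\geq 2$ is exactly Lafon's statement $S(\mathbb{C}\laurent{t}) = \emptyset$ (Theorem \ref{thm:lafon}). To obtain the matching upper bound I would substitute $t = s^2$ into the two defining equations and look for a solution with $x = 0$ and $u$ a generic constant; solving for $y^2$ and $z^2$, the resulting right-hand sides have even valuation in $s$, and because every unit of $\mathbb{C}\powerseries{s}$ is a square (the residue field being algebraically closed) one extracts $y, z \in \mathbb{C}\laurent{s}$. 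This produces a $\mathbb{C}\laurent{t^{1/2}}$-point, whence inf-multiplicity exactly two.

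For part (2), fix $t_0 \in \mathbb{P}^1(\mathbb{C}) \setminus \{0\}$: the fibre $X_{t_0}$ has a reduced component precisely when its inf-multiplicity is one, i.e.\ when $S(\mathbb{C}\laurent{t-t_0}) \neq \emptyset$. For finite $t_0 \neq 0$ this is the easy case: specialising $u$ and $x$ to generic constants makes $t$, $t^{-1}$ and $t^2u^2 - t$ into units at $t = t_0$, so the expressions for $y^2$ and $z^2$ are themselves units, hence squares, and a $\mathbb{C}\laurent{t-t_0}$-point is read off. The value $t_0 = \infty$ is the only delicate one: after the substitution $t = 1/w$ and clearing denominators, a generic constant choice yields a $y^2$ of odd valuation, so instead I would impose $u^2 = 1$ to force the offending term to vanish, after which $y^2$ and $z^2$ again have even valuation and a $\mathbb{C}\laurent{1/t}$-point results.

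The main obstacle is precisely this control of valuation parities in the two non-generic cases: the $\mathbb{C}\laurent{t^{1/2}}$-point of part (1) and the point at infinity of part (2). A naive specialisation of $(u,x)$ typically leaves $y^2$ or $z^2$ with odd valuation, which are not squares over a Laurent series field with residue field $\mathbb{C}$; the whole point is to choose the specialisation ($x = 0$, respectively $u^2 = 1$) so that the relevant cancellation occurs. This mirrors the conceptual picture that the failure of local points at $t = 0$ is a two-torsion (quadratic) phenomenon, which is why it survives over $\mathbb{C}\laurent{t}$, is killed by the ramified quadratic base change $t = s^2$, and is absent at every other value of $t$.
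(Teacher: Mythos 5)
Your proposal is correct and follows essentially the same route as the paper: both reduce all claims to the existence or non-existence of $\mathbb{C}\laurent{(t-t_0)^{1/m}}$-points via Lemma~\ref{lemma:local_points} (equivalently Lemma~\ref{lemma:index}) together with Lang--Nishimura, use Lafon's theorem $S(\mathbb{C}\laurent{t})=\emptyset$ for the lower bound at $t=0$, and produce explicit local points for the upper bounds (your choice $u^2=1$ at infinity is exactly the paper's point $u=x=1$, and your $\mathbb{C}\laurent{t^{1/2}}$-point with $x=0$, $u$ a generic constant is a harmless variant of the paper's $x=u=0$ point). The only divergence is that for non-divisibility you rely solely on the index-one fact for Enriques surfaces over $\mathbb{C}\laurent{t}$ (Remark~\ref{remark:enriques}), which is one of the two alternatives the paper offers, the other being an explicit $\mathbb{C}\laurent{t^{1/3}}$-point that makes the conclusion independent of \cite{ELW}.
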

\begin{proof}
Note that
\[ x=0, u=0, y=\sqrt{-1}, z= \frac{\sqrt{-1}}{t\sqrt{t}} \]
defines a $\mathbb{C} \laurent{\sqrt{t}}$-point (even a $\mathbb{Q}(\sqrt{-1})(\sqrt{t})$-point) of the smooth surface in $\mathbb{A}^4_{\mathbb{Q}(t)}$ with coordinates $(u,x,y,z)$ given by
\begin{eqnarray*}
x^2 - tu^2 + t &=& (t^2 u^2 - t)y^2 \neq 0 \\
x^2 - 2tu^2 + t^{-1} &= & t(t^2u^2 -t)z^2 \neq 0.
\end{eqnarray*}
Thus,  by the Lang--Nishimura theorem \cite[Theorem~3.6.11]{PoonenRat}, we deduce   that $S(\mathbb{C}\laurent{\sqrt{t}})\neq\emptyset$. Therefore, since $S(\mathbb{C} \laurent{t}) = \emptyset$ by Lafon's theorem (Theorem \ref{thm:lafon}), it follows from Lemma \ref{lemma:local_points} that $X_0$ is an inf-multiple fibre of inf-multiplicity two. To show that the fibre $X_0$ is non-divisible, we can either appeal to Proposition~\ref{prop:enriques_fibration} or note that $S$ has a $\mathbb{C}\laurent{t^{1/3}}$-point. One such point is given\footnote{J.~Nicaise also found in 2009  this cubic point in an e-mail exchange with the fourth-named author.}  by
\[ x = t^{-1/3}, u = t^{-2/3}, y = t^{-2/3} \sqrt{\frac{1-t^{1/3}+t^{5/3}}{1-t^{1/3}}}, z = t^{-4/3} \sqrt{\frac{1+t^{1/3}-2t^{2/3}}{1-t^{1/3}}} = t^{-4/3} \sqrt{1+2t^{1/3}}. \]

Assume $t_0 \in \mathbb{C}\setminus \{0\}$. Then, as $S(\mathbb{C}(\sqrt{t}))$ is non-empty, we directly infer that $S(\mathbb{C} \laurent{t-t_0})$ is non-empty, i.e., $S$ has a local point at $t_0$. Similarly, consider 
\[ x = 1, u = 1, y = \frac{1}{t \sqrt{1-t^{-1}}}, z = \frac{1}{t}\sqrt{\frac{-2+t^{-1}+t^{-2}}{1-t^{-1}}} \]
This defines a local point of $S_{\mathbb{C}(t)}$ at $\infty$, i.e., $S(\mathrm{Frac}(\widehat{\mathcal{O}_{\mathbb{P}^1_{\mathbb{C}},\infty}})) \neq \emptyset$.    It follows from Lemma \ref{lemma:local_points} that $X_{t_0}$ has a reduced component for every $t_0 \in \mathbb{P}^1(\mathbb{C}) \setminus \{0\}$.
\end{proof}
 
\begin{remark} 
We stress that our proof of Theorem~\ref{thm:main_insight} does not need Proposition~\ref{prop:enriques_fibration} (which relies on the main result of \cite{ELW}), as we verified all the necessary properties of Lafon's Enriques surface by direct computation and  Lemma \ref{lemma:local_points}.   
\end{remark}

\begin{remark}
For each odd integer $q\geq 3$, write $s=t^{1/q}$. Then, an explicit $\mathbb{C}\laurent{t^{1/q}}$-point is given by: 
\[ x =s^{-1},~u :=s^{-\frac{q+1}{2}}, ~y :=s^{-\frac{q+1}{2}}\sqrt{\frac{1-s+s^{q+2}}{1-s}}, ~z=s^{-\frac{3q-1}{2}}\sqrt{\frac{1+s^{q-2}-2s^{q-1}}{1-s}}.\]
\end{remark}
 
\begin{remark} 
Let $\mathcal{X}$ be a regular proper model of Lafon's Enriques surface over $\mathbb{P}^1_{\mathbb{Q}}$ such that the reduced subscheme underlying the special fibre over $0$ has simple normal crossings. We can say essentially three things about $\mathcal{X}_0$: 
\begin{enumerate}
\item The fibre $\mathcal{X}_0$ contains components of multiplicities two and three (by Lemma~\ref{lemma:local_points} and Theorem~\ref{thm:main_insight}).
\item There is a closed point $x$ of $\mathcal{X}_0$ such that the gcd of the multiplicities of the irreducible components containing $x$ equals $1$ (Remark \ref{remark:spec_index}).
\item The fundamental group of $\mathcal{X}_0$ is $\mathbb{Z}/2\mathbb{Z}$ (see Section \ref{section:K3}). 
\end{enumerate} 
Note that only (reduced) semi-stable models with torsion canonical bundle (i.e., so-called ``Kulikov models'') have been classified, so that the fibre $\mathcal{X}_0$ lies outside of the presently charted territory. 
\end{remark}

\section{Orbifold Mordell (hence abc) contradicts the Weakly Special Conjecture}\label{section:main_results}
Every smooth projective model of Lafon's Enriques surface over $\mathbb{P}^1_{\mathbb{Q}}$ is special. However, by taking suitable ramified covers of $\mathbb{P}^1$, we get a plethora of counterexamples to the Weakly Special Conjecture, assuming the abc conjecture holds. 

\begin{theorem}\label{thm2}
Let $X \to \mathbb{P}^1_{\mathbb{Q}}$ be a morphism with $X$ a smooth projective threefold such that $X_{\mathbb{Q}(t)}$ is isomorphic to Lafon's Enriques surface. Let $C$ be a smooth projective curve over a field $K$ of characteristic zero. Let $f \colon C \to \mathbb{P}^1_K$ be a finite morphism which is \'etale over $0$. Let $X_f$ be a desingularization of $X \times_{\mathbb{P}^1_{\mathbb{Q}}} C$. Then, the following statements hold.
\begin{enumerate}
\item The threefold $X_f$ is a smooth projective threefold, the generic fibre of $X_f\to C$ is an Enriques surface, the orbifold base of the morphism $X_f\to C$ is $(C, \frac{1}{2} [f^{-1}(0)])$, and $X_f \to C$ has no divisible fibres.
\item The threefold $X_f$ is weakly special if and only if $C$ is of genus at most one.
\item The threefold $X_f$ is special if and only if $C$ is of genus zero and $\deg f \leq 4$.
\end{enumerate}
\end{theorem}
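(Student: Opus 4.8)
The plan is to extract everything from the orbifold base $(C,\Delta_f)=\bigl(C,\tfrac12[f^{-1}(0)]\bigr)$ furnished by statement~(1), and to play its canonical degree off against the definition of specialness. Since $f$ is \'etale over $0$, the divisor $f^{-1}(0)$ is a sum of $d:=\deg f$ distinct reduced points, so $\deg(K_C+\Delta_f)=2g(C)-2+\tfrac{d}{2}$. This quantity is strictly positive exactly when $g(C)\ge 1$, or $g(C)=0$ and $d\ge 5$, and it is $\le 0$ exactly when $g(C)=0$ and $d\le 4$. The whole statement then reduces to the equivalence: $(C,\Delta_f)$ is \emph{not} of general type $\iff g(C)=0$ and $d\le 4$.

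First I would settle the ``only if'' direction in contrapositive form. Suppose $g(C)\ge 1$ or $d\ge 5$, so that $\deg(K_C+\Delta_f)>0$ and hence $(C,\Delta_f)$ is an orbifold curve of general type. As the generic fibre of $X_f\to C$ is an Enriques surface (statement~(1)), this morphism is dominant with geometrically connected fibres and $\dim C=1\ge 1$, so it is a fibration of general type. Generality of type of $(C,\Delta_f)$ and connectedness of the fibres persist after base change to $\overline{k}$, so $(X_f)_{\overline k}$ carries a fibration of general type; by Definition~\ref{definition:special}, $X_f$ is not special. Contrapositively, if $X_f$ is special then $g(C)=0$ and $d\le 4$. (Alternatively, since special implies weakly special, statement~(2) already forces $g(C)\le 1$, leaving only $g(C)=1$ and $g(C)=0,\,d\ge 5$ to dispatch by the same degree computation.)

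For the ``if'' direction, assume $g(C)=0$ and $d\le 4$, so that $(C,\Delta_f)=\bigl(\mathbb{P}^1,\tfrac12[f^{-1}(0)]\bigr)$ has $\deg(K_C+\Delta_f)\le 0$ and is therefore a special (non-general-type) orbifold curve. The generic fibre of $X_f\to C$ is a smooth Enriques surface, which is special \cite[Theorem~5.1]{Ca04}. I would then invoke Campana's theorem that a fibration whose general fibre and whose orbifold base are both special has special total space, concluding that $X_f$ is special. The essential point---and precisely the reason the inf- versus gcd-multiplicity distinction matters throughout the paper---is that the orbifold base appearing here must be the one formed with the \emph{inf}-multiplicity, as in the definition of $\Delta_f$; with the gcd-multiplicity the (non-divisible) fibre over $0$ would contribute nothing and the argument would collapse.

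The main obstacle is this ``if'' direction, since it rests entirely on the orbifold additivity of specialness. A self-contained alternative would be to assume a fibration of general type $h\colon X_f\ratmap Z$ and argue by cases: if $h$ contracts the general Enriques fibre of $f$, then $h$ factors birationally through $f$, forcing $Z\cong C$ and $(Z,\Delta_h)\cong(C,\Delta_f)$, which is not of general type---a contradiction; if $h$ does not contract the general fibre, then $h$ restricts non-constantly to the Enriques surfaces, and I would exploit $q=0$ and $\kappa=0$ of Enriques surfaces to constrain $Z$. The difficulty in this second case is to control the orbifold multiplicities of $h$ (equivalently, $K_Z+\Delta_h$), which is exactly the bookkeeping that Campana's additivity theorem packages cleanly; I would therefore route the argument through that theorem rather than reprove it.
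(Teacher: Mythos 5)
Your treatment of part (iii) is correct and essentially coincides with the paper's own argument for that part: the degree computation $\deg(K_C+\Delta_f)=2g(C)-2+\tfrac{1}{2}\deg f$, the observation that a general-type orbifold base makes $X_f\to C$ a fibration of general type (hence $X_f$ non-special by Definition \ref{definition:special}), and, conversely, Campana's additivity result that a fibration whose general fibres (here Enriques surfaces) and whose orbifold base are special has special total space.

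However, there is a genuine gap: you prove only part (iii). You take part (i) as an input (``the orbifold base furnished by statement (1)''), but part (i) is itself an assertion of the theorem and is its technical core. Identifying the orbifold base as $(C,\tfrac{1}{2}[f^{-1}(0)])$ requires: Lafon's theorem that $S(\mathbb{C}\laurent{t})=\emptyset$ together with the explicit $\mathbb{C}\laurent{\sqrt{t}}$-point, giving inf-multiplicity exactly two over $0$ (Theorem \ref{thm:main_insight}(i)); the explicit local sections away from $t=0$, giving reduced components of all other fibres (Theorem \ref{thm:main_insight}(ii) combined with Lemma \ref{lemma:local_points}); the hypothesis that $f$ is \'etale over $0$ to transfer inf-multiplicity two to each point of $f^{-1}(0)$ after base change and desingularization; and the non-divisibility of all fibres, which follows from Lemma \ref{lemma:index} (or Proposition \ref{prop:enriques_fibration}). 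None of this appears in your proposal. Likewise, part (ii) is never proved: its ``if'' direction (genus of $C$ at most one implies $X_f$ weakly special) is not a consequence of any degree computation --- indeed, for $g(C)=1$ the orbifold base \emph{is} of general type and $X_f$ is \emph{not} special, yet $X_f$ is weakly special --- and it requires Theorem \ref{thm:ws}, i.e., that a fibration with weakly special fibres, weakly special base, and no divisible fibres in codimension one has weakly special total space (which in turn uses part (i)). So your claim that ``the whole statement reduces to'' the general-type dichotomy for $(C,\Delta_f)$ is incorrect; that dichotomy governs only part (iii).
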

\begin{proof}
The generic fibre of $X_f\to C$ is an Enriques surface, as the generic fibre of $X \to \mathbb{P}^1_{\mathbb{Q}}$ is an Enriques surface. By the characterization of gcd-multiple fibres via local-rational points (Lemma \ref{lemma:index}), the morphism $X_f\to C$ has no divisible fibres, since $X\to \mathbb{P}^1_{\mathbb{Q}}$ has no divisible fibres (Theorem \ref{thm:main_insight}). 

Now, by Theorem \ref{thm:main_insight}~$(i)$, the fibre $X_0$ is an inf-multiple fibre of inf-multiplicity two. Therefore, since $f$ is \'etale over $0$, for every $c$ in $f^{-1}(0)$, the fibre of $X_f\to C$ over $c$ is inf-multiple of inf-multiplicity two as well. On the other hand, suppose that $c$ is a point of $C$ lying over a point $t \in \mathbb{P}^1\setminus\{0\}$. Then, as $X_t$ has a reduced component (Theorem \ref{thm:main_insight}~$(ii)$), the fibration $X \to \mathbb{P}^1$ has a formal local section (Lemma \ref{lemma:local_points}). This implies that $X_f\to C$ has a formal local section around any point $c$ not mapping to $0$. In particular, again by Lemma \ref{lemma:local_points}, the fibre of $X_f \to C$ over a point $c$ not mapping to $0$ has a reduced component.
 This shows that the orbifold base of $X_f\to C$ is $(C,\frac{1}{2}[f^{-1}(0)])$ and proves $(i)$.

If $X_f$ is weakly special, then obviously $C$ has genus at most one. The converse follows from Theorem~\ref{thm:ws}. This proves $(ii)$. 

To prove $(iii)$, assume first that $X_f$ is special. Then, since the fibration $X_f \to C$ defines an orbifold morphism $X_f\to (C,\frac{1}{2} [f^{-1}(0)])$, it follows that the orbifold base $(C, \frac{1}{2} [f^{-1}(0)])$ is not of general type, i.e., the inequality $2g(C)-2+\frac{1}{2}\deg f\leq 0$ holds. Since $\deg(f) \geq 1$, this implies that $g(C)=0$ and $\deg f \leq 4$. Conversely, if $\deg f\leq 4$ and $C$ is of genus zero, then $(C,\frac{1}{2} [f^{-1}(0)])$ is special. Consequently, as the threefold $X_f$ fibres in special surfaces over $C$ with a special orbifold base, we conclude that $X_f$ is special. 
\end{proof}

As already mentioned in the introduction, the following result is Theorem \ref{thm1}. 

\begin{corollary}\label{corollary:first_theorem}
For every integer $d \geq 1$, there exists a smooth projective threefold $X_d$ over $\mathbb{Q}$ and a morphism $X_d \to \mathbb{P}^1_{\mathbb{Q}}$ such that the following properties hold.
\begin{enumerate}
\item The generic fibre of $X_d \to \mathbb{P}^1_{\mathbb{Q}}$ is an Enriques surface over $\mathbb{Q}(t)$.
\item There are exactly $d$ points in $\mathbb{P}^1(\overline{\mathbb{Q}})$ over which the fibre of $f$ is inf-multiple, and all of these fibres have inf-multiplicity two and are non-divisible.
\end{enumerate}
\end{corollary}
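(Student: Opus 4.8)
The plan is to build a single base threefold whose only inf-multiple fibre lies over $0$, and then pull it back along degree-$d$ covers of $\mathbb{P}^1_{\mathbb{Q}}$ that are \'etale over $0$, so as to spread that one inf-multiple fibre out into $d$ of them. The entire statement will then follow by applying Theorem \ref{thm2}; the work is almost all contained in the results already proved.

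First I would produce a smooth projective threefold $X$ over $\mathbb{Q}$ together with a proper surjective morphism $X\to \mathbb{P}^1_{\mathbb{Q}}$ whose generic fibre is Lafon's Enriques surface $S$. This is the step where resolution of singularities enters: by Theorem \ref{thm:lafon} the surface $S$ exists over $\mathbb{Q}(t)$, so one spreads it out to a projective family over a dense open of $\mathbb{P}^1_{\mathbb{Q}}$, takes a projective closure over $\mathbb{P}^1_{\mathbb{Q}}$, and applies Hironaka's resolution. Since the generic fibre $S$ is already smooth, the closure is regular along it and resolution is an isomorphism there, so the resulting $X\to \mathbb{P}^1_{\mathbb{Q}}$ genuinely has generic fibre $S$. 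Theorem \ref{thm:main_insight} then describes all of its fibres: $X_0$ is inf-multiple and non-divisible of inf-multiplicity two, while every geometric fibre over $t_0\neq 0$ has a reduced component and hence is not inf-multiple. This already settles the case $d=1$ by taking $X_1:=X$.

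For general $d$ I would invoke Theorem \ref{thm2} with $K=\mathbb{Q}$ and $C=\mathbb{P}^1_{\mathbb{Q}}$, the only freedom being the choice of finite morphism $f\colon \mathbb{P}^1_{\mathbb{Q}}\to \mathbb{P}^1_{\mathbb{Q}}$. I would take $f$ of degree $d$, \'etale over $0$, with $f^{-1}(0)$ reduced of cardinality $d$; concretely, the degree-$d$ map attached to a polynomial with $d$ distinct roots does the job, since simple roots make $f$ unramified over $0$ and force $f^{-1}(0)$ to consist of $d$ distinct points of $\mathbb{P}^1(\overline{\mathbb{Q}})$. Setting $X_d:=X_f$ to be a desingularization of $X\times_{\mathbb{P}^1_{\mathbb{Q}}} C$, Theorem \ref{thm2}(i) immediately gives that $X_d$ is a smooth projective threefold, that the generic fibre of $X_d\to C$ is an Enriques surface, that $X_d\to C$ has no divisible fibres, and that its orbifold base is $(C,\tfrac{1}{2}[f^{-1}(0)])$.

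It then remains only to read off the count from the orbifold base. The inf-multiple fibres of $X_d\to C\cong \mathbb{P}^1_{\mathbb{Q}}$ are exactly those over the support of $\tfrac12[f^{-1}(0)]$, namely the $d$ points of $f^{-1}(0)$; \'etaleness of $f$ over $0$ keeps each of their inf-multiplicities equal to $2$ rather than some multiple of it, and non-divisibility is inherited from the absence of divisible fibres. Every remaining fibre acquires a reduced component from the corresponding fibre of $X\to \mathbb{P}^1_{\mathbb{Q}}$ and so is not inf-multiple. This produces exactly $d$ inf-multiple fibres, each of inf-multiplicity two and non-divisible, as claimed. Granting Theorems \ref{thm:main_insight} and \ref{thm2}, the remaining argument is pure bookkeeping; the one point that demands genuine care is the choice of $f$, where I must ensure simultaneously that $\deg f=d$, that $f$ is \'etale over $0$, and that $f^{-1}(0)$ is reduced of size exactly $d$, so that the $d$ inf-multiple fibres neither merge into fewer points nor change their multiplicities.
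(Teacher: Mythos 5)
Your proposal is correct and follows essentially the same route as the paper: the paper's proof also takes the pullback of a smooth projective model $X\to\mathbb{P}^1_{\mathbb{Q}}$ of Lafon's Enriques surface along the degree-$d$ map given by $z^d-1$ (a polynomial with $d$ distinct roots, hence \'etale over $0$), desingularizes, and reads off everything from Theorem \ref{thm2}(i). Your only addition is spelling out the construction of the base threefold $X$ via spreading out and Hironaka resolution, which the paper leaves implicit.
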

\begin{proof}
Let $f \colon \mathbb{P}^1 \to \mathbb{P}^1$ be the morphism given by the polynomial $z^d-1$. Note that $f$ is of degree $d$ and \'etale over $0$. Let $X_d$ be a desingularization of the pullback of $X \to \mathbb{P}^1_{\mathbb{Q}}$ along $f$. Then, by Theorem~\ref{thm2}~$(i)$, the generic fibre of $X_d \to \mathbb{P}^1_{\mathbb{Q}}$ is an Enriques surface, and $X_d \to \mathbb{P}^1_{\mathbb{Q}}$ has precisely $d$ inf-multiple fibres (lying over the $d$-th roots of unity in $\mathbb{P}^1(\overline{\mathbb{Q}})$) and these fibres are of inf-multiplicity two and are not divisible. This concludes the proof. 
\end{proof}

\begin{proof}[Proof of Theorem \ref{thm:abc}] 
Let $X_d$ be as in Theorem \ref{thm1}. Note that $X_d$ is weakly special by Theorem~\ref{thm:ws}. Moreover, since $d \geq 5$, the orbifold base of $X_d \to \mathbb{P}^1$ is of general type. Thus, the Orbifold Mordell Conjecture implies that the orbifold base is Mordellic. In particular, $X_d$ is not arithmetically special, i.e., for every number field $K$, the set $X_d(K)$ is not dense.  
\end{proof}

\begin{remark}
If we take $C$ to be a curve of genus one in Theorem \ref{thm2} then the resulting smooth projective weakly special non-special threefold has infinite fundamental group. The previous three-dimensional examples of Bogomolov--Tschinkel \cite{BT}, Campana--P\u{a}un \cite{CP07} and Rousseau--Turchet--Wang \cite{RTW} are all simply connected.
\end{remark}

\begin{remark}
Let $X \to \mathbb{P}^1_{\mathbb{Q}}$ be a morphism with $X$ a smooth projective threefold over $\mathbb{Q}$ such that the generic fibre $X_{\mathbb{Q}(t)}$ of $X\to \mathbb{P}^1_{\mathbb{Q}}$ is isomorphic to Lafon's Enriques surface $S$ over $\mathbb{Q}(t)$. Then, it follows from Theorem \ref{thm2} that $X$ is special.
\end{remark}

Although Theorem \ref{thm:ws_contradicts_abc} follows from Theorem \ref{thm:abc}, we note that it also  results from the following more precise result.

\begin{corollary} \label{cor:final}
Let $C$ be a smooth projective curve of genus at most one over a number field~$K$. Let $f \colon C \to \mathbb{P}^1_K$ be a finite morphism which is \'etale over $0$ such that $2g(C)-2+\frac{1}{2}\deg f > 0$. Let $X_f$ be a desingularization of $X_K \times_{\mathbb{P}^1_{K}} C$. 
\begin{enumerate}
\item The threefold $X_f$ is weakly special.
\item Assume the Orbifold Mordell Conjecture holds. Then $X_f$ is not arithmetically special.
\item Assume the abc conjecture holds. Then $X_f$ is not arithmetically special.
\end{enumerate}
\end{corollary}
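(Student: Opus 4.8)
The plan is to deduce Corollary \ref{cor:final} as an essentially immediate consequence of Theorem \ref{thm2} together with Theorem \ref{thm:abc_implies_orb_mordell} and Conjecture \ref{conj:orb_mor}. First I would invoke Theorem \ref{thm2}~$(i)$, which identifies the orbifold base of $X_f \to C$ as $(C, \frac{1}{2}[f^{-1}(0)])$, records that $X_f \to C$ has no divisible fibres, and shows that its generic fibre is an Enriques surface. For part $(1)$, since $C$ has genus at most one, it is weakly special (a curve of genus zero is rational, and a genus one curve has trivial fundamental group up to isogeny, so it admits no dominant rational map from a finite étale cover to a curve of general type); hence Theorem \ref{thm2}~$(ii)$ — equivalently Corollary \ref{cor:enriques_fibration} — gives that $X_f$ is weakly special, which is exactly statement $(1)$.

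For part $(2)$, the key point is to translate a hypothetical dense set of $K'$-rational points on $X_f$ (for some finite extension $K'/K$) into a contradiction with the Mordellicity of the orbifold base. The fibration $X_f \to C$ defines an orbifold morphism $X_f \to (C, \frac{1}{2}[f^{-1}(0)])$, so composing a $K'$-point of $X_f$ with $X_f \to C$ yields a $K'$-point of $C$. I would then appeal to the functorial passage to integral models: a dense set of rational points on $X_f$ spreads out to a dense set of $\mathcal{O}_{K',S}$-integral points on a suitable model for some finite set of places $S$, and because $X_f \to C$ has no divisible fibres and inf-multiple fibres exactly over $f^{-1}(0)$ with multiplicity two, each such integral point projects to an orbifold integral point of the model of $(C, \frac{1}{2}[f^{-1}(0)])$ in the precise sense of the orbifold-morphism definition (this is the content of \cite{Ca04, Ca11} invoked in the introduction). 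The hypothesis $2g(C) - 2 + \frac{1}{2}\deg f > 0$ is exactly the statement that $K_C + \frac{1}{2}[f^{-1}(0)]$ has positive degree, so $(C, \frac{1}{2}[f^{-1}(0)])$ is a smooth proper orbifold curve of general type. The Orbifold Mordell Conjecture then forces this orbifold to be Mordellic, so its integral orbifold points are finite — contradicting density. Hence $X_f$ is not arithmetically special, proving $(2)$.

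For part $(3)$, I would simply note that the abc conjecture implies the Orbifold Mordell Conjecture by Theorem \ref{thm:abc_implies_orb_mordell}, so $(3)$ follows from $(2)$.

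The main obstacle is making rigorous the projection of integral points on $X_f$ to \emph{orbifold} integral points on the base, i.e.\ verifying that the tangency (multiplicity) condition in the definition of an orbifold morphism $\Spec R \to (C, \frac{1}{2}[f^{-1}(0)])$ is genuinely satisfied. This requires the inf-multiplicity computation of Theorem \ref{thm:main_insight}~$(i)$ (each fibre over $f^{-1}(0)$ is inf-multiple of multiplicity two, so a section meeting such a fibre does so with multiplicity at least two, matching the coefficient $\frac{1}{2} = 1 - \frac{1}{2}$) and the reduced-component statement of Theorem \ref{thm:main_insight}~$(ii)$ at the remaining fibres, ensuring the orbifold divisor is supported exactly on $f^{-1}(0)$. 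Everything else is a formal chain of the cited implications, and in fact the bulk of this verification is already carried out in the proof of Theorem \ref{thm:abc}, so I would phrase Corollary \ref{cor:final} as a refinement that replaces the specific covering $z^d - 1$ by an arbitrary finite morphism $f$ étale over $0$ with $K_C + \frac{1}{2}[f^{-1}(0)]$ of positive degree, the argument being otherwise identical.
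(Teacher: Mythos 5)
Your proposal is correct and follows essentially the same route as the paper: part (1) via Theorem \ref{thm2} (equivalently Corollary \ref{cor:enriques_fibration}) since $C$ is weakly special, part (2) by noting that $X_f \to (C, \frac{1}{2}[f^{-1}(0)])$ is a surjective orbifold morphism onto a general-type orbifold curve, so arithmetic specialness of $X_f$ would force the base to be arithmetically special and hence not Mordellic, and part (3) from part (2) via Theorem \ref{thm:abc_implies_orb_mordell}. The extra detail you give on spreading rational points out to orbifold integral points on models is exactly what the paper compresses into ``it would follow from the definitions,'' so there is no substantive difference.
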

\begin{proof}  Note that $(i)$ follows from Theorem \ref{thm2}.(ii).
Since $2g(C)-2+\frac{1}{2}\deg f>0$, the orbifold $(C,\frac{1}{2}[f^{-1}(0)])$ is of general type. Thus, if Orbifold Mordell holds, then $(C,\frac{1}{2}[f^{-1}(0)])$ is Mordellic. Note that $X_f \to (C, \frac{1}{2}[f^{-1}(0)])$ is a surjective orbifold morphism. Therefore, if $X_f$ were arithmetically special, it would follow from the definitions that $(C,\frac{1}{2}[f^{-1}(0)])$ is arithmetically special, and thus not Mordellic. Contradiction. This proves $(ii)$. Now $(iii)$ follows from $(ii)$, as abc implies Orbifold Mordell (Theorem \ref{thm:abc_implies_orb_mordell}). 
\end{proof}

\begin{remark}\label{remark:strategy}
Let $n \geq 5$ be an integer, let $K$ be a number field and let $a_1,\ldots, a_n\in K$ be pairwise distinct, so that $(\mathbb{P}^1_K, \frac{1}{2}[a_1]+\ldots+\frac{1}{2}[a_n])$ is of general type. Let $f \colon \mathbb{P}^1_K \to \mathbb{P}^1_K$ be the map given by the polynomial $f(x) = \prod_{i=1}^n (x-a_i)$. Then, by Theorem \ref{thm2}, the Weakly Special Conjecture is false, assuming that the Orbifold Mordell Conjecture holds for the (single!)\ orbifold curve $(\mathbb{P}^1_{K}, \frac{1}{2}[a_1]+\ldots+\frac{1}{2}[a_n])$.
\end{remark}

\begin{remark}\label{remark:core}
With notation as in Theorem \ref{thm2}, the core of $X_f$ is trivial if and only if $\deg f \leq 4$ and $C$ has genus zero.
Moreover, if $\deg(f) \geq 5$ or $C$ is of genus at least one, then the core of~$X_f$ is precisely the orbifold base $(C, \frac{1}{2} [f^{-1}(0)])$ of the fibration $X_f\to C$. The main novelty of this article 
is the existence of weakly special threefolds with non-trivial one-dimensional core (namely, most of the $X_f$).
\end{remark}

\section{Disproving complex-analytic analogues of the Weakly Special Conjecture}

In this section we work over $\mathbb{C}$ and disprove (all reasonable) complex-analytic analogues of the Weakly Special Conjecture.
More precisely, the Weakly Special Conjecture has several natural complex-analytic analogues all of which we expect (and sometimes even already know) to be false. For example, the ``Brody'' analogue of the Weakly Special Conjecture would predict that a smooth projective weakly special variety has a dense entire curve; this was disproved in \cite{CP07} by using Bogomolov--Tschinkel's weakly special threefolds \cite{BT} and extending the work of Bogomolov--McQuillan \cite{Bogomolov, McQuillan} on surfaces of general type with $c_1^2>c_2$ to orbifold surfaces (see also \cite{Rou12}). Moreover, the Kobayashi analogue of the Weakly Special Conjecture, which predicts that a smooth projective weakly special variety has a vanishing Kobayashi pseudometric, is disproved in \cite[Theorem~5]{CWKoba} by an example strengthening the conclusion of \cite{CP07}. 
 
In this section, we use Lafon's Enriques surface to give new (and arguably much easier) counterexamples to several complex-analytic analogues of the Weakly Special Conjecture; see Theorem \ref{thm:ws_analytic} for the final result. Before we specialize the base field to the complex numbers, we briefly discuss the notion of perturbation.

\subsection{Perturbing orbifolds of general type} 
Let $\Delta = \sum_i \left(1-\frac{1}{m_i}\right) D_i$ be an orbifold divisor on a normal variety $X$ over a field $k$ of characteristic zero. If $m_i =\infty$, we refer to $D_i$ as a \emph{logarithmic component} of $\Delta$. We may then write $\Delta = \Delta^{\log} + \Delta^{\mathrm{fin}}$, where \[\Delta^{\log} := \sum_{\substack{i \\ m_i = \infty}} D_i = \lfloor \Delta \rfloor\] and $\Delta^{\mathrm{fin}} := \Delta-\Delta^{\log}$. A \emph{finite perturbation} of $(X,\Delta)$ is an orbifold divisor $\Delta^+$ on $X$ with finite multiplicities such that $\supp \Delta^+ \subseteq \supp \Delta$ and, for every divisor $D \subset X$, the multiplicity of $D$ in $\Delta^+$ is at most the multiplicity of $D$ in $\Delta$. In this case, we also refer to the orbifold $(X,\Delta^+)$ as a \emph{finite perturbation} of $(X,\Delta)$.

%

\begin{proposition}[Perturbing]\label{prop:perturb_gt}
If $(X,\Delta)$ is a smooth proper orbifold of general type, then there is a finite perturbation $\Delta^+$ of $\Delta$ such that $(X,\Delta^+)$ is of general type.
\end{proposition}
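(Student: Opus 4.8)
The plan is to leave the finite part of $\Delta$ untouched and perturb only the logarithmic components, relying on the openness of bigness to preserve general type. The only coefficients of $\Delta$ that are not already finite multiplicities are the ones equal to $1$, namely those of the prime divisors comprising $\Delta^{\log} = \lfloor \Delta \rfloor$; it suffices to push each of these coefficients slightly below $1$. Concretely, if $\lfloor \Delta \rfloor = 0$ then $\Delta$ already has finite multiplicities and I take $\Delta^+ := \Delta$. Otherwise, for an integer $N \geq 1$, set
\[
\Delta^+ := \Delta - \frac{1}{N}\lfloor \Delta \rfloor ,
\]
which replaces the coefficient $1$ of each logarithmic component $D_i$ by $1 - \frac{1}{N}$ (giving it the finite multiplicity $N$) and leaves $\Delta^{\mathrm{fin}}$ unchanged. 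Then $\Delta^+$ has only finite multiplicities, $\supp \Delta^+ \subseteq \supp \Delta$, and the coefficient of every prime divisor in $\Delta^+$ is at most its coefficient in $\Delta$; hence $\Delta^+$ is a finite perturbation of $(X,\Delta)$ for every $N \geq 1$.

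It remains to choose $N$ so that $(X,\Delta^+)$ is of general type, that is, so that
\[
K_X + \Delta^+ = (K_X + \Delta) - \frac{1}{N}\lfloor \Delta \rfloor
\]
is big. Since $K_X + \Delta$ is big by hypothesis and $\lfloor \Delta \rfloor$ is a fixed effective divisor, this is exactly the statement that a big $\QQ$-divisor stays big after subtracting a sufficiently small positive multiple of a fixed divisor, i.e., the openness of the big cone. I would make this explicit through Kodaira's lemma: writing the big divisor $K_X + \Delta = A + E$ with $A$ ample and $E$ effective, openness of the ample cone gives that $A - \frac{1}{N}\lfloor \Delta \rfloor$ is ample for all $N \gg 0$, so that $K_X + \Delta^+ = (A - \frac{1}{N}\lfloor \Delta \rfloor) + E$ is the sum of an ample and an effective $\QQ$-divisor and therefore big. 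Fixing any such $N$ completes the argument.

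I do not expect a genuine obstacle here: the only input is the standard preservation of bigness under small perturbations, and everything else is bookkeeping on coefficients. The one point to keep straight is that the perturbation must lower precisely the infinite multiplicities while keeping the already-finite ones within the prescribed bound, which the uniform choice $\frac{1}{N}\lfloor \Delta \rfloor$ handles automatically.
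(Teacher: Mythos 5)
Your proof is correct and follows essentially the same route as the paper: both take $\Delta^+ = \Delta - \frac{1}{N}\Delta^{\log}$ and conclude by openness of bigness, the only difference being that the paper cites \cite[Corollary~2.2.24]{Lazzie1} for this openness while you spell it out via Kodaira's lemma (big $=$ ample $+$ effective) and openness of the ample cone.
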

\begin{proof}
Since $K_X+\Delta$ is big, by \cite[Corollary 2.2.24]{Lazzie1}, the divisor $K_X+\Delta-\epsilon \Delta^{\log}$ is big for all sufficiently small $\epsilon > 0$. In particular, there exists an integer $m$ such that $K_X + \Delta - \frac{1}{m} \Delta^{\log}$ is big. Then $\Delta^+ := \Delta - \frac{1}{m} \Delta^{\log}$ is a finite perturbation with the desired properties.
\end{proof} 

Perturbing orbifolds of general type is a standard technique in their study; see for example the proof of \cite[Lemma~5.9]{RTW}.

\begin{remark}[Orbifold curves]\label{remark:perturb_curves}
If $(X,\Delta)$ is a smooth proper orbifold curve of general type, it is possible to write down explicitly a finite perturbation $\Delta^+$ of $\Delta$ such that $(X,\Delta^+)$ is (still) of general type. Indeed, for every component $D$ of $\Delta^{\log}$, define the multiplicity of $D$ in $\Delta^+$ to be $7$. Then, by the classification of orbifold curves of general type, the orbifold $(X,\Delta^+)$ is of general type. 
\end{remark}

We follow \cite[Expos\'e~XII]{SGA1} and let $X^{\an}$ denote the complex-analytic space associated to a finite type scheme $X$ over $\mathbb{C}$.

\subsection{Definitions} 
In this section, let $(X,\Delta)$ be a smooth proper orbifold over $\mathbb{C}$. If $\Delta = \sum_i (1-\frac{1}{m_i})D_i$, we let $\Delta^{\an}$ denote the divisor $\sum_i (1-\frac{1}{m_i}) D_i^{\an}$.
(The pair $(X^{\an},\Delta^{\an})$ is an example of a ``complex-analytic orbifold''; see \cite{Ca11}.)
If $B$ is a complex manifold, a holomorphic map $f \colon B \to X^{\an}$ defines an \emph{orbifold map (with respect to $\Delta^{\an}$)}, denoted by $B\to (X^{\an},\Delta^{\an})$, if $f(B) \not\subset \supp \Delta^{\an}$ and, for every $i$ and every irreducible component $E$ of $f^{-1}(D_i^{\an})$, the multiplicity of $E$ in $f^* D_i^{\an}$ is at least $m_i$. 
 
\begin{definition}
The \emph{Kobayashi pseudometric} $d_{(X,\Delta)}$ on $( X\setminus \Delta^{\log})^{\an}$ is the largest pseudometric $d$ on $(X\setminus \Delta^{\log})^{\an}$ such that, for every holomorphic orbifold map $f\colon \mathbb{D} \to (X^{\an},\Delta^{\an})$ and every $a,b\in\mathbb{D}$, the inequality 
\[ d_{\mathbb{D}}(a,b) \geq d(f(a),f(b)) \]
holds.
\end{definition}

With this notation, we have $d_B = d_{(B,0)}$. Moreover, if $f \colon B\to (X^{\an},\Delta^{\an})$ is an orbifold map and $a,b\in B$, then $d_B(a,b)\geq d_{(X ,\Delta )}(f(a),f(b))$. 

We follow \cite{CWKoba} and study an orbifold extension of Kobayashi hyperbolicity (resp.\ Brody hyperbolicity). We also introduce orbifold analogues of the notions of Picard hyperbolicity and Borel hyperbolicity. (Note that Picard hyperbolicity is the same as the $\Delta^\ast$-extension property in \cite[Definition~3.10]{JKuch}.)

\begin{definition}
Let $Z \subset X$ be a closed subset.
We say that $(X,\Delta)$ is \emph{Kobayashi hyperbolic modulo $Z$} if, for all $p,q \in X^{\an}$ with either $p \notin Z^{\an}$ or $q \notin Z^{\an}$, the inequality $d_{(X,\Delta)}(p,q)>0$ holds. 
We say that $(X,\Delta)$ is \emph{Picard hyperbolic modulo $Z$} if, for every holomorphic orbifold map $f\colon \mathbb{D}^\ast \to (X,\Delta)$ with $f(\mathbb{D}^\ast) \not\subset Z^{\an}$, the holomorphic map $\mathbb{D}^\ast\to X^{\an}$ extends to a holomorphic map $\mathbb{D}\to X^{\an}$.
We say that $(X,\Delta)$ is \emph{Borel hyperbolic modulo $Z$} if, for every (reduced) variety $S$ over $\mathbb{C}$, every holomorphic orbifold map $f\colon S^{\an}\to (X^{\an},\Delta^{\an})$ with $f(S^{\an})\not\subset Z^{\an}$ is algebraic.
We say that $(X,\Delta)$ is \emph{Brody hyperbolic modulo $Z$} if every orbifold map $f\colon \mathbb{C}\to (X^{\an},\Delta^{\an})$ with $f(\mathbb{C})\not\subset Z^{\an}$ is constant. 
\end{definition}

\begin{definition}
We say that $(X,\Delta)$ is \emph{Kobayashi pseudo-hyperbolic} if there exists a proper (Zariski\nobreakdash-)closed subset $Z \subset X$ such that $(X,\Delta)$ is Kobayashi hyperbolic modulo $Z$. We define \emph{Picard pseudo-hyperbolicity}, \emph{Borel pseudo-hyperbolicity} and \emph{Brody pseudo-hyperbolicity} analogously. 
\end{definition}
 
\begin{definition}
We say that $(X,\Delta)$ is \emph{Kobayashi hyperbolic} if it is Kobayashi hyperbolic modulo the empty subset $Z:=\emptyset \subset X$. We define \emph{Picard}, \emph{Borel} and \emph{Brody hyperbolicity} of $(X,\Delta)$ analogously.
\end{definition}

\subsection{From Kobayashi to Picard to Borel to Brody}
Let us start with the fact that Kobayashi hyperbolic orbifolds (with no logarithmic components) are Picard hyperbolic; this is implicit in Kwack's work on Great Picard Theorem for compact hyperbolic varieties \cite{Kwack-2}.

\begin{theorem}[Kwack]\label{thm:kwack}
Let $(X,\Delta)$ be a smooth proper orbifold over $\mathbb{C}$. Assume that the multiplicities of $\Delta$ are finite. If $(X,\Delta)$ is Kobayashi hyperbolic, then $(X,\Delta)$ is Picard hyperbolic.
\end{theorem}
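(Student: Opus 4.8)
The plan is to reduce the orbifold Picard statement to the classical Great Picard Theorem for compact Kobayashi hyperbolic spaces, following Kwack. The setup is a holomorphic orbifold map $f \colon \mathbb{D}^\ast \to (X^{\an},\Delta^{\an})$ with finite multiplicities, and we must extend the underlying holomorphic map $\mathbb{D}^\ast \to X^{\an}$ across the puncture to a holomorphic map $\mathbb{D} \to X^{\an}$. Since $X$ is proper, $X^{\an}$ is compact, so by the properness of the map to a point and the removable singularity results for maps into compact spaces, the key issue is \emph{not} compactness but rather equicontinuity, i.e.\ producing a suitable distance-decreasing property that prevents $f$ from wandering near the puncture. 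First I would observe that Kobayashi hyperbolicity of $(X,\Delta)$ (with finite multiplicities, so $\Delta^{\log} = 0$ and the pseudometric $d_{(X,\Delta)}$ is defined on all of $X^{\an}$) means $d_{(X,\Delta)}$ is a genuine metric inducing the usual topology on the compact space $X^{\an}$.

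The central step is the distance-decreasing inequality recorded just before the statement: for an orbifold map $f \colon B \to (X^{\an},\Delta^{\an})$ and $a,b \in B$ one has $d_B(a,b) \geq d_{(X,\Delta)}(f(a),f(b))$. Applying this with $B = \mathbb{D}^\ast$, and using that the Poincaré (hyperbolic) metric $d_{\mathbb{D}^\ast}$ on the punctured disc is complete and that hyperbolic distance to the puncture behaves controllably, I would show that $f$ maps the hyperbolic horocyclic/annular neighborhoods of the puncture into $d_{(X,\Delta)}$-small (hence topologically small) subsets of $X^{\an}$. Concretely, Kwack's argument replaces $\mathbb{D}^\ast$ by a sequence of concentric circles shrinking to the puncture and uses the distance-decreasing property together with completeness of the hyperbolic metric on $\mathbb{D}^\ast$ to force the images of these circles to be Cauchy, so that $f$ has a limit point $p \in X^{\an}$ as one approaches the puncture.

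Next I would upgrade ``has a limit point'' to ``has a limit,'' which is the heart of the Great Picard mechanism. The standard device is a normal-families/monodromy argument: passing to the universal cover $\mathbb{H} \to \mathbb{D}^\ast$, lifting $f$, and using the hyperbolicity-induced equicontinuity (via the metric $d_{(X,\Delta)}$ being a true metric on the compact $X^{\an}$) to extract a convergent subsequence. One then rules out two distinct limit points by a connectedness argument on shrinking annuli, again invoking the distance-decreasing inequality to show the oscillation of $f$ over each annulus tends to zero. Since $X^{\an}$ is compact and $d_{(X,\Delta)}$ induces its topology, a single limit point with vanishing oscillation yields an honest continuous (hence, by Riemann's removable singularity theorem for bounded holomorphic maps into a projective variety embedded in some $\mathbb{P}^N$) extension $\mathbb{D} \to X^{\an}$.

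The main obstacle, and the place where the orbifold hypothesis genuinely enters, is ensuring that the \emph{orbifold} distance-decreasing property $d_{\mathbb{D}^\ast}(a,b) \geq d_{(X,\Delta)}(f(a),f(b))$ is strong enough: the orbifold Kobayashi pseudometric $d_{(X,\Delta)}$ is built from orbifold discs $\mathbb{D} \to (X^{\an},\Delta^{\an})$ satisfying the tangency conditions along $\Delta$, so one must check that Kobayashi hyperbolicity of the \emph{orbifold} (not of $X$ alone) suffices to make $d_{(X,\Delta)}$ a metric inducing the compact topology on $X^{\an}$, and that the extension produced lands correctly with respect to $\Delta$. The finiteness of the multiplicities is exactly what guarantees $\Delta^{\log} = 0$, so that the pseudometric lives on all of $X^{\an}$ rather than on a punctured locus, which is what allows the compactness of $X^{\an}$ to be used without modification. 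I expect the routine part to be Kwack's annular Cauchy estimate and the delicate part to be verifying that the orbifold metric behaves like the classical Kobayashi metric near points of $\supp \Delta^{\an}$, so that no extra degeneration is introduced at the boundary divisor.
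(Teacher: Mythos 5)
Your proposal is correct and follows essentially the same route as the paper: both reduce the statement to Kwack's classical extension theorem, via the observations that finite multiplicities force $\Delta^{\log}=0$ so that Kobayashi hyperbolicity makes $d_{(X,\Delta)}$ a genuine metric on the compact space $X^{\an}$, and that the orbifold map satisfies the distance-decreasing inequality $d_{\mathbb{D}^\ast} \geq f^\ast d_{(X,\Delta)}$, with compactness of $X^{\an}$ supplying the required limit point. The only difference is presentational: the paper cites Kwack's theorem (Kobayashi, Theorem~6.3.4) directly once conditions (a) and (b) are verified, whereas you sketch the internal annulus/Cauchy/limit-point mechanism of Kwack's proof.
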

\begin{proof}
Since the multiplicities of $\Delta$ are finite (i.e., $\Delta^{\log} = 0$) and $(X,\Delta)$ is Kobayashi hyperbolic, the pseudometric $d_{(X,\Delta)}$ is a metric on $X^{\an}$. Let $f \colon \mathbb{D}^\ast \to (X^{\an},\Delta^{\an})$ be a holomorphic map. Note that condition $(a)$ in Kwack's theorem \cite[Theorem~6.3.4]{KobayashiBook} holds, as $d_{\mathbb{D}^\ast} \geq f^\ast d_{(X,\Delta)}$. Moreover, the second condition $(b)$ in \emph{loc.\ cit.}\ holds by compactness of $X^{\an}$. Thus, \emph{loc.\ cit.}\ implies that $f$ extends to a holomorphic map $\mathbb{D} \to X^{\an}$.
\end{proof}

We do not know whether a Kobayashi pseudo-hyperbolic orbifold is Picard pseudo-hyperbolic (even for the trivial orbifold structure $\Delta=0$ and $X$ smooth projective). 
 
\begin{lemma}\label{lem:borel_brody}
Let $(X,\Delta)$ be an orbifold and let $Z \subset X$ be a closed subset. 
\begin{enumerate}
\item If $(X,\Delta)$ is Picard hyperbolic modulo $Z$, then $(X,\Delta)$ is Borel hyperbolic modulo $Z$.
\item If $(X,\Delta)$ is Borel hyperbolic modulo $Z$, then $(X,\Delta)$ is Brody hyperbolic modulo $Z$.
\end{enumerate}
\end{lemma}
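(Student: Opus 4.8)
The plan is to prove the two implications separately. For (i) I would run the classical passage from a one-variable (Picard) extension property to global algebraicity, and for (ii) I would feed a hypothetical non-constant orbifold entire curve back into the Borel hypothesis after reparametrising it by the exponential.

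So suppose $(X,\Delta)$ is Picard hyperbolic modulo $Z$, and let $f\colon S^{\an}\to(X^{\an},\Delta^{\an})$ be a holomorphic orbifold map from a variety $S$ with $f(S^{\an})\not\subset Z^{\an}$. After replacing $S$ by a resolution of singularities (algebraicity can be checked after a proper birational modification by rigidity/descent, and the pullback remains an orbifold map with the same image, since ramification only raises multiplicities), I may assume $S$ is smooth, and I fix a smooth compactification $S\subset\bar S$ with $B=\bar S\setminus S$ a simple normal crossings divisor. The goal is then to show that $f$ extends to a meromorphic map from $\bar S^{\an}$ to $X^{\an}$: granting this, the closure of the graph $\Gamma_f$ inside the proper variety $\bar S\times X$ is an analytic subset, hence algebraic by Chow's theorem together with GAGA for proper schemes, and it is the graph of a rational map restricting to $f$ on $S$. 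Thus $f$ is algebraic, as required for Borel hyperbolicity modulo $Z$.

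The heart of (i) is the extension across $B$, which I would obtain one transverse disk at a time. Near a general point $b$ of a single component of $B$ (away from the crossings), choose a holomorphic disk $\delta\colon\mathbb{D}\to\bar S$ meeting $B$ transversally at $b$ with $\delta(\mathbb{D}^\ast)\subset S$. For general $b$ the composite $f\circ\delta\colon\mathbb{D}^\ast\to X^{\an}$ is again an orbifold map into $(X,\Delta)$ — transversality ensures that at the unique point where $\delta$ crosses a component of $f^{-1}(\supp\Delta)$ the multiplicity is still at least the prescribed one — and $f\circ\delta$ has image outside $Z^{\an}$, since $f^{-1}(Z^{\an})$ is a proper analytic subset of $S$. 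Picard hyperbolicity modulo $Z$ then removes the singularity of $f\circ\delta$ at the puncture. The absence of essential singularities along transverse disks through a dense set of boundary points is exactly the input of a Noguchi-type extension theorem (the $\Delta^\ast$-extension property of \cite{JKuch}); combined with the properness of $X$ and a Hartogs-type argument filling in the indeterminacy locus of codimension at least two, this yields the meromorphic extension to $\bar S$. I expect this step — upgrading the pointwise removal of singularities to a genuine global meromorphic extension — to be the main technical obstacle.

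For (ii), suppose $(X,\Delta)$ is Borel hyperbolic modulo $Z$ and, aiming for a contradiction, let $f\colon\mathbb{C}\to(X^{\an},\Delta^{\an})$ be a non-constant orbifold map with $f(\mathbb{C})\not\subset Z^{\an}$. Viewing $\mathbb{C}=(\mathbb{A}^1)^{\an}$, the Borel hypothesis forces $f$ to be algebraic, hence a non-constant morphism $\mathbb{A}^1\to X$; by properness of $X$ it extends to a non-constant morphism $\bar f\colon\mathbb{P}^1\to X$ onto a rational curve $C$ which, being the closure of $f(\mathbb{C})$, lies in neither $\supp\Delta$ nor $Z$. I would then compose with the exponential: set $g:=\bar f\circ\exp\colon\mathbb{C}\to X$, where $\exp\colon\mathbb{C}\to\mathbb{C}^\ast\subset\mathbb{A}^1$. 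Since $\exp$ is a local biholomorphism onto $\mathbb{C}^\ast\subset\mathbb{A}^1$, the orbifold tangency condition for $g$ at a point $z_0$ coincides with the one for $f$ at $\exp(z_0)\in\mathbb{A}^1$, so $g$ is again an orbifold map $\mathbb{C}\to(X,\Delta)$, and $g(\mathbb{C})$, being dense in $C$, is contained in neither $\supp\Delta$ nor $Z$. However, $g$ is not algebraic: the exponential has an essential singularity at infinity with full cluster set, so the cluster set of $g$ there is all of $C$, which is not a single point; hence $g$ does not extend to $\mathbb{P}^1$. This contradicts Borel hyperbolicity modulo $Z$ applied to $g$. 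Therefore no such non-constant $f$ exists, and $(X,\Delta)$ is Brody hyperbolic modulo $Z$.
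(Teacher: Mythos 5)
Your part (ii) is essentially the paper's own argument: use Borel hyperbolicity to see that a hypothetical non-constant orbifold entire curve is algebraic, then compose with $\exp$ to manufacture a non-algebraic orbifold entire curve, contradicting Borel hyperbolicity. The paper deduces non-algebraicity of $\varphi\circ\exp$ from the fact that a non-constant algebraic map $\mathbb{A}^1\to X$ has finite fibres (equivalently, extends to $\mathbb{P}^1$), while you argue via the cluster set at infinity; this difference is cosmetic, and your treatment of the orbifold condition and of the locus $Z$ under composition with $\exp$ is correct.

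Part (i), however, has a genuine gap, and it is exactly where you flag it. You reduce Borel hyperbolicity to a \emph{meromorphic extension} of $f$ across the boundary divisor $B=\overline{S}\setminus S$, and you propose to obtain this from the fact that $f\circ\delta$ extends across the puncture for transverse disks $\delta$ through general points of $B$. Passing from this disk-wise (``separate'') extension to a genuine meromorphic extension of $f$ is not a formal consequence of the hypotheses: the Noguchi-type extension theorems you invoke require geometric hypotheses on the target (e.g.\ hyperbolic embeddedness, or compactness plus an actual hyperbolicity property), whereas here the only input is the abstract one-variable extension property ``Picard hyperbolic modulo $Z$''; worse, that property only applies to punctured disks whose image is not contained in $Z^{\an}$, so no off-the-shelf extension machinery applies. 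Thus the main analytic step of your proof is asserted, not proved, and it is unclear it even holds in this generality. The paper's proof avoids any higher-dimensional extension theorem: after shrinking $S$ to a smooth affine variety, it inducts on $\dim S$ using the algebraicity criterion underlying \cite[Proposition~3.6]{JKuch}, which produces a (very general) smooth hypersurface $H\subset S$ such that $f$ is algebraic if and only if $f|_{H^{\an}}$ is algebraic, and such that $f(H^{\an})\not\subset Z^{\an}\cup\supp\Delta^{\an}$; iterating reduces to $\dim S=1$, where --- exactly as in your curve case, which matches the paper's --- Picard hyperbolicity modulo $Z$ extends $f$ over the finitely many punctures of a smooth compactification $\overline{S}$, and holomorphic maps $\overline{S}^{\an}\to X^{\an}$ are algebraic by \cite[Corollaire~XII.4.5]{SGA1}. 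To repair your write-up, either prove the higher-dimensional extension statement you need, or replace that step by this hypersurface induction.
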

\begin{proof} 
To prove $(i)$, we adapt the proof of \cite[Corollary~3.11]{JKuch}. Namely, let $S$ be a variety and let $f \colon S^{\an} \to (X^{\an},\Delta^{\an})$ be a holomorphic orbifold map. To prove that $f \colon S^{\an}\to X^{\an}$ algebraizes, we may replace $S$ by a dense open. Thus, we may assume that $S$ is a smooth affine variety. We now argue by induction on $\dim S$. 

Assume that $\dim S = 1$. Let $\overline{S}$ be a smooth projective curve with function field $\mathbb{C}(S)$. To prove algebraicity of $f$, it suffices to show that $f$ extends to a holomorphic map $\overline{S}^{\an}\to X^{\an}$ as every such map is algebraic \cite[Corollaire~XII.4.5]{SGA1}. To show that $f$ extends, let $p \in \overline{S}\setminus S$ and choose a small open disk $\mathbb{D}\subset \overline{S}^{\an}$ centered at $p$ such that $\mathbb{D}\cap \overline{S}\setminus S = \{p\}$ (i.e., $\mathbb{D}$ only contains the ``puncture'' $p$ from the boundary of $S$). Then, the holomorphic orbifold map $f\colon S^{\an}\to (X^{\an},\Delta^{\an})$ restricts to a holomorphic map $f|_{\mathbb{D}^\ast} \colon \mathbb{D}^\ast \to X^{\an}$. Since $f(\mathbb{D}^\ast)\not\subset Z^{\an}\cup \supp \Delta^{\an}$, the map $f|_{\mathbb{D}^\ast}$ defines a holomorphic orbifold map $f|_{\mathbb{D}^\ast}\colon \mathbb{D}^\ast \to (X^{\an}, \Delta^{\an})$, so that our assumption on Picard hyperbolicity modulo $Z$ implies that $f|_{\mathbb{D}^\ast}\colon \mathbb{D}^\ast \to X^{\an}$ extends to a holomorphic map $\mathbb{D}\to X^{\an}$. This implies that $f$ extends to a holomorphic map $\overline{S}^{\an}\to X^{\an}$ as required. 

If $\dim S > 1$, the proof of \cite[Proposition~3.6]{JKuch} shows that there is a smooth hypersurface $H \subset S$ such that $S^{\an}\to X^{\an}$ is algebraic if and only if $H^{\an}\to S^{\an}$ is algebraic and $f(H^{\an})\not\subset Z^{\an}\cup \supp \Delta^{\an}$. (In fact, the very general $H\subset S$ has the desired property.) Since the image of the composed map $H^{\an}\subset S^{\an}\to X^{\an}$ does not lie in $\supp \Delta^{\an}$, the composition $H^{\an}\subset S^{\an}\to X^{\an}$ is an orbifold holomorphic map $H^{\an}\to (X^{\an},\Delta^{\an})$. Therefore, by the induction hypothesis, it is algebraic. This implies that $S^{\an}\to X^{\an}$ is algebraic, as desired. 

To prove $(ii)$, we adapt the proof of \cite[Lemma~3.2]{JKuch} and suppose that $\varphi \colon \mathbb{C} \to (X^{\an},\Delta^{\an})$ is a non-constant holomorphic orbifold map. If $\varphi$ is not algebraic, then we are done. If $\varphi$ is algebraic, then $\varphi \circ \exp \colon \mathbb{C} \to \mathbb{C}\to X^{\an}$ is not algebraic. Since $\exp$ is dominant, the composition $\varphi\circ \exp$ is a non-algebraic holomorphic orbifold map $\mathbb{C}\to (X^{\an},\Delta^{\an})$. This shows that $(X^{\an},\Delta^{\an})$ is not Borel hyperbolic.
\end{proof}

\subsection{Orbifold curves}
In the case of orbifold curves, all notions of hyperbolicity coincide with the property of being of general type.
 
\begin{theorem}\label{thm:orbifold_curves} 
Let $(X,\Delta)$ be a smooth proper orbifold curve. Then the following are equivalent. 
\begin{enumerate}
\item The orbifold $(X,\Delta)$ is of general type (i.e., $\deg K_X +\deg \Delta >0$). 
\item The orbifold $(X,\Delta)$ is Kobayashi hyperbolic. 
\item The orbifold $(X,\Delta)$ is Picard hyperbolic. 
\item The orbifold $(X,\Delta)$ is Borel hyperbolic. 
\item The orbifold $(X,\Delta)$ is Brody hyperbolic. 
\end{enumerate}
\end{theorem}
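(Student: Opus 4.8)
The plan is to run two overlapping implication cycles so that all of (i)--(v) become linked, while isolating the analytically deep input (orbifold uniformization) and the one delicate feature (logarithmic components). First I would record the free implications: (iii) $\Rightarrow$ (iv) $\Rightarrow$ (v) are immediate from Lemma \ref{lem:borel_brody}. It then suffices to establish (i) $\Rightarrow$ (iii), (i) $\Rightarrow$ (ii), (ii) $\Rightarrow$ (v), and (v) $\Rightarrow$ (i); the two cycles (i) $\Rightarrow$ (iii) $\Rightarrow$ (iv) $\Rightarrow$ (v) $\Rightarrow$ (i) and (i) $\Rightarrow$ (ii) $\Rightarrow$ (v) $\Rightarrow$ (i) then close up and force the equivalence of all five conditions.

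For (i) $\Rightarrow$ (iii) I would deliberately avoid applying Kwack's theorem directly to $\Delta$, since $\Delta$ may carry logarithmic components, and instead pass to a finite perturbation. By Remark \ref{remark:perturb_curves} there is a finite perturbation $\Delta^+ \leq \Delta$ with only finite multiplicities such that $(X,\Delta^+)$ is still of general type. Orbifold uniformization for curves (the orbifold form of the Poincar\'e--Koebe theorem) shows that a general type orbifold curve with finite multiplicities is Kobayashi hyperbolic: its orbifold universal cover is the disk, and $d_{(X,\Delta^+)}$ is the pushforward of the Poincar\'e metric, hence a genuine metric. Theorem \ref{thm:kwack} then gives that $(X,\Delta^+)$ is Picard hyperbolic. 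Finally I would compare: since $\Delta^+ \leq \Delta$, every orbifold map $\mathbb{D}^\ast \to (X,\Delta)$ is a fortiori an orbifold map $\mathbb{D}^\ast \to (X,\Delta^+)$ (its image avoids $\supp \Delta \supseteq \supp \Delta^+$, and the multiplicity conditions for $\Delta$ are the stronger ones), so Picard hyperbolicity of $(X,\Delta^+)$ forces such a map to extend, yielding Picard hyperbolicity of $(X,\Delta)$.

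The two arrows involving the Kobayashi notion are handled as follows. For (i) $\Rightarrow$ (ii) I would again invoke orbifold uniformization, now for $\Delta$ itself: a general type orbifold curve is hyperbolic even in the presence of cusps (logarithmic points), and the orbifold Kobayashi metric restricted to $(X \setminus \Delta^{\log})^{\an}$ is the complete hyperbolic metric, hence strictly positive on pairs of distinct points, which is exactly Kobayashi hyperbolicity. For (ii) $\Rightarrow$ (v) I would use the standard Brody argument: if $f \colon \mathbb{C} \to (X^{\an},\Delta^{\an})$ is a non-constant orbifold entire curve, its image avoids $\supp \Delta^{\an} \supseteq \Delta^{\log}$, so it lands where $d_{(X,\Delta)}$ is defined; rescaling the disks $\mathbb{D}_R \subset \mathbb{C}$ and using the distance-decreasing property forces $d_{(X,\Delta)}(f(a),f(b)) = 0$ for all $a,b$, contradicting (ii).

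The last and most hands-on step is (v) $\Rightarrow$ (i), which I would prove by contraposition using the classification of orbifold curves. If $(X,\Delta)$ is not of general type, then $\deg K_X + \deg \Delta \leq 0$, so the pair is spherical or Euclidean: either $X \setminus \supp \Delta$ already admits a non-constant entire curve (the cases of $\mathbb{P}^1$ with at most two marked points, including the ``bad'' orbifolds, where one uses $z \mapsto e^z$ into $\mathbb{C}^\ast$ or a linear map into $\mathbb{C}$), or the orbifold is uniformized by $\mathbb{C}$ (the Euclidean signatures $(2,2,2,2)$, $(3,3,3)$, $(2,4,4)$, $(2,3,6)$ and genus-one curves with $\Delta = 0$) or by $\mathbb{P}^1$ (the remaining spherical triangle orbifolds). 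In each uniformized case the uniformizing map, precomposed with a non-constant $\mathbb{C} \to \mathbb{C}$ or $\mathbb{C} \to \mathbb{P}^1$, produces a non-constant orbifold entire curve, so $(X,\Delta)$ is not Brody hyperbolic. I expect this classification bookkeeping to be the main obstacle---not because any individual case is difficult, but because one must verify the orbifold multiplicity condition in every family; by contrast the only genuinely analytic ingredient, orbifold uniformization, is classical and I would simply cite it.
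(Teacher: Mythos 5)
Your implication graph and most of your steps mirror the paper's proof: perturbation to finite multiplicities (Proposition \ref{prop:perturb_gt} / Remark \ref{remark:perturb_curves}), Kwack's Theorem \ref{thm:kwack} for Kobayashi $\Rightarrow$ Picard, Lemma \ref{lem:borel_brody} for Picard $\Rightarrow$ Borel $\Rightarrow$ Brody, transfer of hyperbolicity along the identity orbifold map $(X,\Delta)\to(X,\Delta^+)$, and a classification argument for the converse. However, there is a genuine gap at the one analytically deep step: your claim that Kobayashi hyperbolicity of a general type orbifold curve with finite multiplicities follows from orbifold uniformization, ``its orbifold universal cover is the disk, and $d_{(X,\Delta^+)}$ is the pushforward of the Poincar\'e metric.'' That argument works only for \emph{classical} orbifold maps, i.e.\ those whose local intersection multiplicities with $D_i$ are \emph{divisible} by $m_i$, since only such maps lift to the orbifold universal cover $\mathbb{D}$ and yield the distance-decreasing comparison. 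The definition of orbifold map entering the paper's pseudometric $d_{(X,\Delta)}$ only requires multiplicity \emph{at least} $m_i$: a disk map meeting a multiplicity-$2$ point of $\Delta^+$ with local multiplicity $3$ is an orbifold map in this sense, but does not lift through the local model $z\mapsto z^2$ of the orbifold cover. The pseudometric is therefore an extremum over a strictly larger family of disk maps than the classical one, hence a priori \emph{smaller} than the pushforward of the Poincar\'e metric, and its positivity does not follow from Poincar\'e--Koebe. This is precisely why the paper cites \cite[\S 12, Theorem~4]{CWKoba} for (i) $\Rightarrow$ (ii): that theorem asserts Kobayashi hyperbolicity with respect to these non-classical orbifold maps, and its proof requires more than uniformization. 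The same gap propagates to your (i) $\Rightarrow$ (iii) (which routes through Kobayashi hyperbolicity of $(X,\Delta^+)$) and to your (i) $\Rightarrow$ (ii) ``with cusps'' claim; the fix is to replace the uniformization appeal by the citation above, after which your argument closes up.

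Everything else is sound. The two-cycle logic does force all five equivalences; the comparison $\Delta^+\leq\Delta$ in your transfer step is exactly the paper's argument; the rescaling argument for (ii) $\Rightarrow$ (v) is fine; and the contrapositive (v) $\Rightarrow$ (i) by classification is correct and is also what the paper does, phrased there more compactly as the existence of a non-constant orbifold morphism from $\Gm$ or from an elliptic curve. Two small slips: your list of non-hyperbolic signatures omits the Euclidean-with-cusp case $(2,2,\infty)$, e.g.\ $\Delta=\tfrac12[2]+\tfrac12[-2]+[\infty]$ on $\mathbb{P}^1$, which is handled by the entire orbifold curve $z\mapsto e^z+e^{-z}$; and an orbifold map need not have image \emph{avoiding} $\supp\Delta$, only image not contained in it together with the multiplicity conditions, though your transfer argument does not actually depend on this misstatement.
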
 
\begin{proof}
First assume that $\Delta$ has finite multiplicities. Then, the implication $(i) \implies (ii)$ follows from \cite[\S 12, Theorem 4]{CWKoba}. Since the multiplicities of $\Delta$ are finite, $(ii) \implies (iii)$ follows from Theorem \ref{thm:kwack}. The implications $(iii) \implies (iv)$ and $(iv) \implies (v)$ follow from Lemma \ref{lem:borel_brody}. Finally, if $(X,\Delta)$ is not of general type, then there is an elliptic curve $E$ and a non-constant morphism $E\to (X,\Delta)$, so that $(X,\Delta)$ is not Brody hyperbolic. This proves that $(v) \implies (i)$ and concludes the proof for all orbifold curves $(X,\Delta)$ with $\Delta^{\log} = 0$.
 
Now, let us treat the general case (in which some of the components of $\Delta$ might have multiplicity $\infty$). If $(X,\Delta)$ is not of general type, then either there is a non-constant orbifold morphism $\mathbb{A}^1\setminus \{0\} \to (X,\Delta)$ or an elliptic curve $E$ and a non-constant orbifold morphism $E \to (X,\Delta)$. This immediately implies that $(X,\Delta)$ is not Kobayashi hyperbolic, nor Picard hyperbolic, nor Borel hyperbolic, nor Brody hyperbolic. Thus, $(ii) \implies (i)$, $(iii) \implies (i)$, $(iv) \implies (i)$ and $(v) \implies (i)$. 

Now, to conclude the proof, assume that $(i)$ holds, i.e., $(X,\Delta)$ is of general type. Choose a finite perturbation $(X,\Delta^+)$  that is (still) of general type; see Proposition \ref{prop:perturb_gt}. (Instead of Proposition \ref{prop:perturb_gt}, we could also appeal to the classification of orbifold curves of general type; see Remark \ref{remark:perturb_curves}.) Now, $(X,\Delta^+)$ is Kobayashi hyperbolic, Picard hyperbolic, Borel hyperbolic and Brody hyperbolic by the first paragraph of this proof. Since the identity map $(X,\Delta) \to (X,\Delta^+)$ is an orbifold map, it follows that $(X,\Delta)$ is Kobayashi hyperbolic, Picard hyperbolic, Borel hyperbolic and Brody hyperbolic. We have shown that $(i) \implies (ii)$, $(i) \implies (iii)$, $(i) \implies (iv)$ and $(i) \implies (v)$. This concludes the proof.
\end{proof}

\subsection{Analytic notions of specialness}
The opposite notion of Kobayashi pseudo-hyperbolicity would be that the Kobayashi pseudometric vanishes everywhere. We follow \cite{Ca11} and formalize the ``opposite'' behaviour in the Kobayashi, Picard, Borel, and Brody setting as follows. 
 
\begin{definition}
Let $(X,\Delta)$ be a smooth proper orbifold pair. We say that $(X,\Delta)$ is \emph{Kobayashi-special} if $d_{(X,\Delta)}$ is identically zero. We say that $(X,\Delta)$ is \emph{Picard-special} if there is a holomorphic orbifold map $f\colon \mathbb{D}^\ast\to (X^{\an},\Delta^{\an})$ such that the closure $\overline{\Gamma_f}$ in $\mathbb{D}\times X^{\an}$ of its graph $\Gamma_f \subset \mathbb{D}^\times \times X^{\an}$ satisfies $\overline{\Gamma_f}\cap (\{0\}\times X^{\an}) = X$. We say that $(X,\Delta)$ is \emph{Borel-special} if there is a smooth affine curve $C$ and a holomorphic orbifold map $C^{\an} \to (X^{\an},\Delta^{\an})$ whose graph in $C^{\an} \times X^{\an}$ is Zariski-dense. We say that $(X,\Delta)$ is \emph{Brody-special} if there is a holomorphic orbifold map $\mathbb{C}\to (X^{\an},\Delta^{\an})$ whose image in $X^{\an}$ is Zariski dense.
\end{definition}

If $(X,\Delta)$ is Brody-special, then $(X,\Delta)$ is easily seen to be Borel-special. It is not clear whether such an $(X,\Delta)$ is Picard-special, nor Kobayashi-special. However, it is worth noting that if in addition $(X,\Delta)$ admits a holomorphic orbifold map $\mathbb{C}\to (X^{\an},\Delta^{\an})$ whose image is metrically dense, then $(X,\Delta)$ is Kobayashi-special and Picard-special.

The many conjectures on special varieties in \cite{Ca11} naturally extend to the following.
\begin{conjecture}
If $(X,\Delta)$ is a smooth proper orbifold pair, the following are equivalent. 
\begin{enumerate}
\item The orbifold $(X,\Delta)$ is special (see \cite[Definition~8.1]{Ca11}).
\item The orbifold $(X,\Delta)$ is Kobayashi-special.
\item The orbifold $(X,\Delta)$ is Picard-special.
\item The orbifold $(X,\Delta)$ is Borel-special.
\item The orbifold $(X,\Delta)$ is Brody-special.
\end{enumerate}
\end{conjecture}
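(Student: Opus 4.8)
The plan is to prove the five-way equivalence as a cycle of implications, organised into two qualitatively different halves: a ``soft'' half that only rearranges the four analytic notions (2)--(5) among themselves, and a ``hard'' half that couples them to the algebraic notion of specialness in (1) through the fibration theory underlying the core. I should say at the outset that I do not expect to prove the statement in full --- this is a conjecture, and the obstacles identified below are precisely the reasons it remains open; what follows is the natural line of attack together with a delineation of where it succeeds unconditionally.

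For the soft half, the one genuinely easy arrow is $(5)\Rightarrow(4)$. Given a Zariski-dense orbifold entire curve $g\colon \CC\to (X^{\an},\Delta^{\an})$, I would replace it by $h:=g\circ \exp\colon \CC\to (X^{\an},\Delta^{\an})$; since $\exp\colon\CC\to\CC^\ast$ is an unramified covering, $h$ is again an orbifold map, and because $\exp$ takes every nonzero value infinitely often along an unbounded arithmetic progression, the Zariski closure of the graph of $h$ in $\mathbb{A}^1\times X$ contains $\mathbb{A}^1\times\{g(z)\}$ for every $z$, hence all of $\mathbb{A}^1\times X$. This exhibits $(X,\Delta)$ as Borel-special. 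The remaining arrows among (2)--(5) are the mirror images of the hyperbolicity implications in Lemma \ref{lem:borel_brody}; I would attempt to dualise those, but, as the text already warns, the special notions are \emph{existence} statements rather than formal negations of the hyperbolicity notions, so this dualisation is not automatic and accounts for part of the difficulty (e.g.\ it is unclear whether Brody-special forces Picard- or Kobayashi-special).

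The first arrow of the hard half, ``analytically special $\Rightarrow$ (1)'', is the one I would push through with the tools already in place. Suppose $(X,\Delta)$ is \emph{not} special, and let $f\colon (X,\Delta)\ratmap (Z,\Delta_f)$ be its core, a fibration of general type with $\dim Z\geq 1$. Any witness of analytic specialness composes with $f$ to give a witness of the same kind on the orbifold base: the compatibility of orbifold entire curves (resp.\ disk families, resp.\ dense algebraic graphs) with fibrations onto the orbifold base guarantees that $f$ applied to the witness is again an orbifold map, and dominance of $f$ preserves Zariski-density (resp.\ vanishing of the pseudometric). When $\dim Z=1$ the base $(Z,\Delta_f)$ is a general-type orbifold curve, which by Theorem \ref{thm:orbifold_curves} is simultaneously Kobayashi, Picard, Borel and Brody hyperbolic; the induced witness on the base is then absurd, and this disposes of the one-dimensional-core case \emph{unconditionally}. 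For $\dim Z>1$ the same scheme forces me to invoke the hyperbolicity of higher-dimensional general-type orbifolds --- the orbifold Green--Griffiths--Lang conjecture --- which is the first genuine obstruction.

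The converse, $(1)\Rightarrow$ any of (2)--(5), is the main obstacle and the deeper reason the statement is merely conjectural: one must \emph{produce} an abundance of entire curves (or force the Kobayashi pseudometric to vanish) on an arbitrary special orbifold, and this is open already for $\Delta=0$ with $X$ rationally connected or of Kodaira dimension zero. The natural strategy is to feed in Campana's structure theory, whereby a special orbifold is built as a tower of fibrations whose orbifold fibres are rationally connected or have trivial canonical class, and to construct the required entire curves fibrewise, propagating them over the base using the density of orbifold sections. Each stage, however, requires Zariski-dense entire curves on rationally connected and on $K$-trivial orbifolds, which is exactly what is not known. I therefore expect that any unconditional progress will be confined to the one-dimensional-core situation, where Theorem \ref{thm:orbifold_curves} already carries the load, with the full equivalence remaining conditional on the orbifold Green--Griffiths--Lang conjecture for the arrows into (1) and on the existence of entire curves on special fibres for the arrows out of (1).
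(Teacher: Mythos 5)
This statement is a conjecture, and the paper offers no proof of it: the only things the paper records are that it holds for smooth proper orbifold curves, as a consequence of Theorem \ref{thm:orbifold_curves}, and (in Theorem \ref{thm:ws_analytic}) that specific weakly special threefolds whose core is a general type orbifold curve fail to be Kobayashi-, Picard-, Borel- or Brody-special. Your proposal correctly recognizes this conjectural status, so there is no complete proof to be compared on either side. Your unconditional pieces are sound and consistent with the paper: the implication $(5) \Rightarrow (4)$ via precomposition with $\exp$ is exactly the trick behind the paper's parenthetical remark that Brody-special implies Borel-special (and mirrors the proof of Lemma \ref{lem:borel_brody}), and your claim that ``analytically special $\Rightarrow$ special'' holds unconditionally when the core is one-dimensional is precisely the mechanism of Theorem \ref{thm:ws_analytic}, there applied to the threefolds $X_d$. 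Your identification of the two genuine obstructions --- an orbifold Green--Griffiths--Lang statement for higher-dimensional cores in one direction, and the production of Zariski-dense entire curves on special orbifolds (open already for $\Delta = 0$, e.g.\ for general K3 surfaces and rationally connected varieties) in the other --- matches the reasons the statement is only a conjecture.

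One caveat on the part you call unconditional. In Theorem \ref{thm:ws_analytic} the fibration $X_d \to \mathbb{P}^1$ is an honest flat morphism onto a curve, so every component of every fibre dominates the corresponding point, and composing any orbifold disk, entire curve, or dense graph with the fibration automatically yields an orbifold map to the orbifold base. For an arbitrary non-special $(X,\Delta)$ with one-dimensional core, the core map is a priori only almost holomorphic, and a fibration model of it can have exceptional components sitting over the support of the orbifold base divisor; a disk or entire curve passing through such a component need not satisfy the multiplicity condition downstairs. So your phrase ``the compatibility of orbifold entire curves \dots with fibrations onto the orbifold base guarantees that $f$ applied to the witness is again an orbifold map'' hides a real step: one must pass to a neat model of the core fibration and verify that the analytic specialness notions are invariant under the corresponding birational modification (for Kobayashi-specialness this invariance is not obvious). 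This is standard Campana-style technology, but it is not automatic, and it is the one point where your ``unconditional'' claim goes beyond what the paper actually establishes.
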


By Theorem \ref{thm:orbifold_curves}, this conjecture holds for smooth proper orbifold curves. 

\begin{theorem} \label{thm:ws_analytic}
There exists a simply connected smooth projective threefold $X$ over $\mathbb{C}$ such that the following properties hold.
\begin{enumerate}
\item $X$ is weakly special, but not special.
\item $X$ is not Kobayashi-special.
\item $X$ is not Picard-special
\item $X$ is not Borel-special.
\item $X$ is not Brody-special.
\end{enumerate}
\end{theorem}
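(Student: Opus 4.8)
The plan is to take for $X$ the threefold $X_f$ of Theorem~\ref{thm2} in the case $C = \mathbb{P}^1_{\mathbb{C}}$ with $f\colon \mathbb{P}^1_{\mathbb{C}}\to\mathbb{P}^1_{\mathbb{C}}$ a finite morphism of degree $5$ that is \'etale over $0$ (for instance the map given by $z^5-1$, as in Corollary~\ref{corollary:first_theorem}). Part~$(1)$ is then immediate: since $C$ has genus zero, $X$ is weakly special by Theorem~\ref{thm2}$(ii)$, and since $\deg f = 5 > 4$, $X$ is not special by Theorem~\ref{thm2}$(iii)$. By Theorem~\ref{thm2}$(i)$ the fibration $\phi\colon X\to C$ has orbifold base $(C,\Delta_f)=(\mathbb{P}^1,\tfrac12[f^{-1}(0)])$, and since $2g(C)-2+\tfrac12\deg f=\tfrac12>0$ this is a smooth proper orbifold curve of general type; by Theorem~\ref{thm:orbifold_curves} it is therefore Kobayashi, Picard, Borel and Brody hyperbolic. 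Simple connectivity of $X$ will be supplied by the fundamental group computation of Section~\ref{section:fundamental_group} (Theorem~\ref{thm:lafon_pi1}), the base $C=\mathbb{P}^1$ being simply connected.

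The key technical point is a functoriality statement: the fibration $\phi$ is itself an orbifold morphism $X\to(C,\Delta_f)$, and post-composition preserves orbifold maps. Concretely, for any point $D\in\supp\Delta_f$ the fibre $\phi^{-1}(D)$ is inf-multiple of multiplicity two, so every component of $\phi^\ast D$ occurs with multiplicity at least $2$; hence for any complex manifold $B$ and any holomorphic map $g\colon B\to X^{\an}$ whose image avoids $\phi^{-1}(\supp\Delta_f)$, pulling back $\phi^\ast D$ along $g$ can only increase these multiplicities, so $\phi\circ g\colon B\to(C^{\an},\Delta_f^{\an})$ is again an orbifold map. With this in hand, each analytic specialness of $X$ transfers to $(C,\Delta_f)$.

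For $(2)$, the orbifold map $\phi$ is distance-decreasing, so $d_X(a,b)\geq d_{(C,\Delta_f)}(\phi(a),\phi(b))$; were $d_X\equiv 0$, surjectivity of $\phi$ would force $d_{(C,\Delta_f)}\equiv0$, contradicting Kobayashi hyperbolicity. For $(4)$ and $(5)$, a Zariski-dense graph of a map $S^{\an}\to X^{\an}$ (resp.\ a Zariski-dense entire curve in $X$) maps under the dominant morphism $\mathrm{id}\times\phi$ (resp.\ $\phi$) to a Zariski-dense graph in $S\times C$ (resp.\ a Zariski-dense entire curve in $C$), so Borel- resp.\ Brody-specialness of $X$ would yield the same for $(C,\Delta_f)$, against Borel resp.\ Brody hyperbolicity. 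For $(3)$, given $g\colon\mathbb{D}^\ast\to X^{\an}$ witnessing Picard-specialness, the condition $\overline{\Gamma_g}\cap(\{0\}\times X^{\an})=X$ already forces $g(\mathbb{D}^\ast)\not\subset\phi^{-1}(\supp\Delta_f)$, so $\phi\circ g$ is an orbifold map; since $\mathrm{id}\times\phi\colon\mathbb{D}\times X^{\an}\to\mathbb{D}\times C^{\an}$ is proper one has $\overline{\Gamma_{\phi\circ g}}=(\mathrm{id}\times\phi)(\overline{\Gamma_g})$, whence taking fibres over $0$ gives $\overline{\Gamma_{\phi\circ g}}\cap(\{0\}\times C)=\phi(X)=C$, so $(C,\Delta_f)$ is Picard-special, contradicting Picard hyperbolicity.

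I expect the main obstacle to be the functoriality step of the second paragraph --- verifying that post-composition with $\phi$ genuinely lands in the orbifold category $(C,\Delta_f)$, which is exactly where the inf-multiplicity two of the degenerate fibres enters and is what makes the general-type orbifold base obstruct every form of specialness. The remaining inputs (properness of $\mathrm{id}\times\phi$ and the behaviour of analytic graph closures under it, surjectivity and dominance of $\phi$, and the distance-decreasing property) are routine once this is in place, while the non-special statement $(1)$ and simple connectivity are inherited from Theorem~\ref{thm2} and Theorem~\ref{thm:lafon_pi1}.
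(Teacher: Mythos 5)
Your proposal is correct and follows essentially the same route as the paper: take $X=X_d$ with $d\geq 5$ (the paper's proof), note the orbifold base $(\mathbb{P}^1,\tfrac12[f^{-1}(0)])$ is a general type orbifold curve hence Kobayashi/Picard/Borel/Brody hyperbolic by Theorem~\ref{thm:orbifold_curves}, and transfer each failure of specialness through the fibration, which is an orbifold morphism precisely because of the inf-multiplicity two fibres. The only difference is one of detail: the paper compresses the transfer step into a parenthetical remark (``the argument for Picard, Borel, and Brody is similar''), whereas you correctly spell out the functoriality of orbifold maps under post-composition and the graph-closure argument for the Picard case.
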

\begin{proof} 
Let $X = X_d$ be as in Theorem \ref{thm1}. Then there is a family of Enriques surfaces $X \to \mathbb{P}^1$ with orbifold base $(\mathbb{P}^1,\Delta)$ of general type, so that $X$ is not special. By Corollary \ref{cor:enriques_fibration}, the threefold $X$ is weakly special. Now, since the orbifold base is an orbifold curve of general type, it follows that it is Kobayashi hyperbolic, Picard hyperbolic, Borel hyperbolic and Brody hyperbolic by Theorem \ref{thm:orbifold_curves}. Since $X\to (\mathbb{P}^1,\Delta)$ is flat surjective, it is an orbifold morphism. Thus, $X$ is not Kobayashi-special, Picard-special, Borel-special, nor Brody-special. (Indeed, since $X\to (\mathbb{P}^1,\Delta)$ is surjective, if $X$ were Kobayashi-special, the same would hold for $(\mathbb{P}^1,\Delta)$. Contradiction. The argument for Picard, Borel, and Brody is similar.)
\end{proof}

\section{Non-isotriviality of Lafon's Enriques surface}\label{section:fundamental_group}

In this section we prove that Lafon's Enriques surface is non-isotrivial. Recall that if $k$ is a field and $k \subseteq K$ is a field extension, a variety $V$ over $K$ is \emph{isotrivial (with respect to $k \subseteq K$)} if $V_{\overline{K}}$ can be defined over $\overline{k}$.

\begin{theorem}\label{thm:lafon_is_non_isotrivial}
If $S$ is Lafon's Enriques surface over $\mathbb{Q}(t)$, then $S$ is non-isotrivial. 
\end{theorem}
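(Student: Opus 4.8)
The plan is to prove non-isotriviality by contradiction, leveraging the rigidity of Lafon's example near $t=0$ together with the behaviour of local points along the K3 double cover. Suppose $S$ were isotrivial with respect to $\mathbb{Q} \subseteq \mathbb{Q}(t)$; then after base change to $\overline{\mathbb{Q}(t)}$ the surface $S$ becomes isomorphic to a fixed Enriques surface $E_0$ defined over $\overline{\mathbb{Q}}$. The key point I would exploit is that an isotrivial family becomes, after a finite base change of the parameter $t$, a constant (or at least locally trivial) family, and in particular its geometric monodromy is finite. I would first reduce to the local situation at $t=0$, replacing $\mathbb{P}^1_{\mathbb{Q}}$ by $\Spec \mathbb{C}\powerseries{t}$; here isotriviality would force $S_{\mathbb{C}\laurent{t}}$ to acquire a $\mathbb{C}\laurent{t^{1/e}}$-point for some $e$ dividing a bound depending only on the finite monodromy group, and in fact one expects a constant family to become trivial after a finite cover — which conflicts with the presence of a genuine inf-multiple fibre of inf-multiplicity exactly two (Theorem \ref{thm:main_insight}).

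The more robust route, and the one the authors signal in the introduction, is to argue through the K3 double cover rather than directly through $S$. First I would pass to the canonical K3 double cover $\widetilde{S}\to S$, which is a finite \'etale degree-two cover corresponding to the nontrivial element of $\pi_1(S_{\overline{\mathbb{Q}(t)}}) = \mathbb{Z}/2\mathbb{Z}$. The decisive observation is that if $S$ were isotrivial, then so would be $\widetilde{S}$, and the \'etale cover $\widetilde{S}\to S$ would extend to a finite \'etale cover of \emph{every} regular projective model of $S$ over $\mathbb{P}^1_{\mathbb{Q}}$ (an isotrivial family, being essentially constant after a finite base change, cannot develop new ramification of its canonical double cover along the fibres). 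I would then contradict this by exhibiting a point $t_0$ of $\mathbb{P}^1$ over which the K3 double cover fails to extend \'etale-ly: concretely, by analysing local points, one shows that at a suitable fibre the covering $\widetilde{S}\to S$ picks up nontrivial ramification (equivalently, the local monodromy of the $\mathbb{Z}/2$-cover is nontrivial), which is impossible for a finite \'etale cover of a regular model. The mechanism is the non-liftability of $\mathbb{C}\laurent{t}$-points: $S$ has no $\mathbb{C}\laurent{t}$-point but acquires one over $\mathbb{C}\laurent{t^{1/2}}$ (Theorem \ref{thm:main_insight}), and this quadratic obstruction is exactly the kind of invariant that is forced to be trivial in an isotrivial family.

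In carrying this out, the steps in order would be: (1) set up the dichotomy "isotrivial $\Rightarrow$ the K3 double cover extends to a finite \'etale cover of some (hence any) regular projective model over $\mathbb{P}^1_{\mathbb{Q}}$"; (2) identify the \'etale-extension obstruction with a computation of local points of $\widetilde{S}$ over an appropriate place; (3) produce, by explicit computation with Lafon's equations (as in the proof of Theorem \ref{thm:main_insight}), a place at which $\widetilde{S}$ fails to have the local points that an \'etale extension would guarantee, yielding the contradiction. I expect the main obstacle to be step (1): making precise and rigorous the implication that isotriviality forces the canonical double cover to extend \'etale-ly over the whole base, since this requires controlling the behaviour of the family not just generically but along codimension-one degenerations, and disentangling genuine isotriviality from the weaker statement that the geometric fibres are abstractly isomorphic. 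One must be careful that isotriviality is a statement after base change to $\overline{\mathbb{Q}(t)}$ and does not immediately descend to give a global \'etale trivialization; I would handle this by a finite base change $\mathbb{P}^1\to\mathbb{P}^1$ trivializing the monodromy, applying the purity/Stein-factorization technology of Lemma \ref{lemma:stein} to the K3 cover, and then tracking how local points transform — the delicate part being that the quadratic point-obstruction at $t=0$ is preserved under (and genuinely detects) the would-be trivialization, contradicting the existence of a \emph{reduced} extension of the \'etale cover.
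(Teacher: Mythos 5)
Your high-level strategy --- pass to the K3 double cover and use non-liftability of local points as the obstruction to extending it to a finite \'etale cover of a good model (the content of Lemma \ref{lemma:nonliftability_extendability}) --- is indeed the paper's strategy. But there is a genuine gap in where you locate the obstruction, and it is fatal. Your stated ``mechanism'' is the quadratic behaviour at $t=0$: no $\mathbb{C}\laurent{t}$-point but a $\mathbb{C}\laurent{t^{1/2}}$-point. Neither version of this detects isotriviality. First, the inf-multiple fibre at $t=0$ is perfectly compatible with potentially good reduction: if $S$ became constant after the base change $t=s^2$, it would have points over $\mathbb{C}\laurent{t^{1/2}}$ but possibly none over $\mathbb{C}\laurent{t}$ --- exactly what is observed --- so the argument sketched in your first paragraph collapses. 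Second, and more surprisingly, the lifting obstruction for the K3 double cover is \emph{also} vacuous at $t=0$: Lemma \ref{lemma:K3_calculation} shows that for every $n$, every $\mathbb{C}\laurent{t^{1/n}}$-point of $S$ lifts to the K3 cover $S'$ (the equations force $u^2t^2-t$ to be a square there), and consequently (Lemma \ref{lemma:etale_follows_from_lifiting}, Section \ref{section:K3}) $S'\to S$ extends to a finite \'etale cover over a neighbourhood of $t=0$ on a suitable model. So the explicit computations of Theorem \ref{thm:main_insight}, which you propose to reuse in your step (3), cannot produce the failure you need; they produce the opposite.

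The paper's proof instead works at the fibre over $t=-\alpha$ with $\alpha$ the golden ratio ($\alpha^2=\alpha+1$), a fibre which is singular but \emph{not} inf-multiple. There a new explicit computation (Lemmas \ref{lemma:nonliftability1} and \ref{lemma:nonliftability2}: after shifting, take $u=1/\sqrt{-\alpha}+ (t+\alpha)^{1/(2n)}$ and $x=\alpha$) produces, for every $n$, points of $S$ over $\mathbb{C}\laurent{(t+\alpha)^{1/(2n)}}$ that lift to \emph{neither} of the two $\mathbb{C}\laurent{(t+\alpha)^{1/n}}$-forms of the K3 double cover; note that one must also rule out the quadratic twist of the cover, a point your sketch does not address. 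Isotriviality is then contradicted purely locally at $t=-\alpha$: if $S_{K_n}$ were constant, then $S_0\times\Spec\mathbb{C}\powerseries{(t+\alpha)^{1/n}}$ would be a smooth proper model over which the constant K3 cover is finite \'etale and restricts geometrically to the K3 cover of $S$, contradicting Lemma \ref{lemma:k3cover_doesnt_extend}. This also shows that your assessment of the difficulty is inverted: your step (1) is essentially free (isotriviality hands you the constant model; no purity or Stein factorization over $\mathbb{P}^1_{\mathbb{Q}}$ is needed, and no claim about ``every regular projective model'' either), while the real content lies in choosing the right place and carrying out the non-lifting computation there.
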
   
 
To explain our (slightly surprising) strategy for proving Theorem \ref{thm:lafon_is_non_isotrivial}, let us momentarily consider any Enriques surface~$S$ over $\mathbb{C}(t)$ with a regular projective model $\mathcal{X}$ over $\mathbb{P}^1_{\mathbb{C}}$. If $S$ is isotrivial, then $S$ obviously has ``potentially good reduction'', i.e., for some curve $C$ with function field $K$ and some finite morphism $C \to \mathbb{P}^1_{\mathbb{C}}$, the surface $S_K$ has a smooth proper model $\mathcal{X}\to C$ over $C$. The existence of such a smooth proper model implies the liftability of local rational points along a K3 double cover (Lemma \ref{lemma:nonliftability_extendability}). More precisely, let $\Omega$ be a finite extension of a completion of $K$ (e.g., $\Omega = \mathbb{C}\laurent{t^{1/n}}$ for some $n\geq 1$). Then the Enriques surface $S_\Omega$ has precisely two K3 double covers over $\Omega$. (In the case of Lafon's Enriques surface, we can write down these K3 covers explicitly; see the proof of Lemma \ref{lemma:k3cover_doesnt_extend}.) Now, by the aforementioned Lemma \ref{lemma:nonliftability_extendability}, any $\Omega$-point of $S$ lifts along one of these K3 double covers. If $\alpha$ is the golden ratio and $\Omega$ is $\mathbb{C}\laurent{(t+\alpha)^{1/n}}$ for any $n \geq 1$, we can verify that, for Lafon's Enriques surface, there are $\Omega$-points on $S$ which do not lift to an $\Omega$-point on either of the two K3 double covers of $S_\Omega$. This means that $S$ does not have potentially good reduction and thus is non-isotrivial. We emphasize that it is the apparently ``very singular'', but not inf-multiple, fibre over $t=-\alpha$ which plays an important role here, rather than the inf-multiple fibre at $t=0$.

We start this section with some explicit computations related to liftability of local points along K3 double covers of Lafon's Enriques surface.

\begin{lemma} \label{lemma:nonliftability1}
Let $n$ be a positive integer and $\alpha \in \mathbb{C}$ such that $\alpha^2 = \alpha+1$. Then, there exist $u,x,y,z$ in $\mathbb{C}\laurent{t^{1/(2n)}}$ such that 
\begin{equation} \label{equation:shifted_lafon} \tag{$*$}
\begin{aligned}
x^2 - tu^2 + \alpha u^2 + t-\alpha & = ( (t-\alpha)^2 u^2 -t+\alpha) y^2 \neq 0\\ 
x^2 - 2tu^2 + 2\alpha u^2 +\frac{1}{t-\alpha} &= (t-\alpha)( (t-\alpha)^2u^2 - t +\alpha) z^2 \neq 0
\end{aligned}
\end{equation}
and such that neither
\[  (t-\alpha)^2 u^2 - t + \alpha \] 
nor
\[ t^{1/n}( (t-\alpha)^2u^2 - t + \alpha) \]
is a square in $\mathbb{C}\laurent{t^{1/(2n)}}$.
\end{lemma}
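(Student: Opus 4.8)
The plan is to pass to the complete field $\mathbb{C}\laurent{w}$ with uniformiser $w := t^{1/(2n)}$, so that $t = w^{2n}$ and $t^{1/n} = w^{2}$. Since $\mathbb{C}$ is algebraically closed of characteristic zero, a nonzero $f \in \mathbb{C}\laurent{w}$ is a square exactly when $\ord_w(f)$ is even; as $\ord_w(t^{1/n}f) = \ord_w(f)+2$, the two non-square assertions of the lemma are \emph{simultaneously} equivalent to the single condition that $\ord_w(Q)$ be odd, where $Q := u^{2}(t-\alpha)^{2} - t + \alpha$. (It is the evenness of the ramification index $2n$ that turns $t^{1/n}$ into a square and couples the two conditions, which is why one works over $\mathbb{C}\laurent{t^{1/(2n)}}$ rather than $\mathbb{C}\laurent{t^{1/n}}$.) Setting $\tau := t-\alpha = w^{2n}-\alpha$, which is a unit congruent to $-\alpha$ modulo $w^{2n}$ since $\alpha\neq 0$, the substitution $\tau = t-\alpha$ identifies $(*)$ with Lafon's system of Theorem~\ref{thm:lafon} in the variable $\tau$: writing $N_1 := x^2-\tau u^2+\tau$ and $N_2 := x^2-2\tau u^2+\tau^{-1}$, the equations read $N_1 = Qy^2$ and $N_2 = \tau Q z^2$ with $Q = \tau(\tau u^2-1)$.

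I would then exploit that, for \emph{any} $x,u\in\mathbb{C}\laurent{w}$, the system is solved by declaring
\[ y^2 := N_1/Q, \qquad z^2 := N_2/(\tau Q); \]
it therefore suffices to choose $x,u$ so that $N_1/Q$ and $N_2/(\tau Q)$ are nonzero squares while $\ord_w(Q)$ is odd, after which $y,z$ are recovered as square roots in $\mathbb{C}\laurent{w}$. To make $\ord_w(Q) = \ord_w(\tau u^2-1)$ odd with $u$ a unit, take $u = u_0+u_1w+\cdots$ with $u_0^2 = -1/\alpha$ (forced, as this is the only way to kill the constant term $-\alpha u_0^2-1$) and $u_1\neq 0$, so that $\ord_w(\tau u^2-1)=1$. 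Next take $x = x_0+x_1w+\cdots$ with $x_0^2 = \alpha+1$, the value annihilating the constant term $x_0^2-(\alpha+1)$ of $N_1$.

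The decisive point --- and the reason $\alpha$ is the golden ratio --- is that this same $x_0$ also annihilates the constant term of $N_2$, which (using $u_0^2=-1/\alpha$) equals $x_0^2-2-1/\alpha$; this vanishes precisely when $x_0^2 = 2+1/\alpha$, compatible with $x_0^2=\alpha+1$ if and only if $\alpha^2-\alpha-1=0$. Granting the golden-ratio relation, both $N_1$ and $N_2$ then have valuation $\geq 1$, and their $w^1$-coefficients $2x_0x_1+2\alpha u_0u_1$ and $2x_0x_1+4\alpha u_0u_1$ are both nonzero for a generic choice (for instance $x = x_0$ constant and $u = u_0+w$, i.e. $x_1=0$, $u_1=1$), whence $\ord_w(N_1)=\ord_w(N_2)=1$. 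Therefore $\ord_w(y^2)=\ord_w(z^2)=1-1=0$, so $y^2$ and $z^2$ are nonzero units, hence squares, producing $y,z\in\mathbb{C}\laurent{w}$ and fulfilling the $\neq 0$ clauses of $(*)$; meanwhile $\ord_w(Q)=1$ is odd, so neither $Q$ nor $t^{1/n}Q = w^2Q$ (of valuation $3$) is a square. Since the correction $w^{2n}$ to $\tau$ only enters in degree $\geq 2$, this order-$\leq 1$ analysis is uniform in $n\geq 1$.

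The only genuine obstacle is the \emph{simultaneous} vanishing of the two constant terms at a single $x_0$: this is not automatic but is forced exactly by $\alpha^2-\alpha-1=0$ (for a general $\alpha$ the requirements $x_0^2=\alpha+1$ and $x_0^2=2+1/\alpha$ are incompatible). This is what singles out the relevant ``very singular but not inf-multiple'' fibre --- the fibre of Lafon's surface over a root of $X^2+X-1$, i.e. over $-\alpha$ in Lafon's original coordinate. Everything else is routine $w$-adic bookkeeping together with the fact that units in $\mathbb{C}\laurent{w}$ are squares.
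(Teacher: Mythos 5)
Your proposal is correct and is essentially the paper's own proof: you pick the same point ($x=\alpha$, i.e.\ $x_0^2=\alpha+1$, and $u = 1/\sqrt{-\alpha} + t^{1/(2n)}$) and run the same argument, namely that both left-hand sides and $Q = u^2(t-\alpha)^2 - t + \alpha$ acquire odd $w$-adic valuation (valuation $\tfrac{1}{2n}$ in $t$-terms), so the quotients defining $y^2$ and $z^2$ are unit squares while $Q$ and $t^{1/n}Q$ are non-squares. Your write-up is more explicit than the paper's (which merely asserts the valuation computations) and your observation that the relation $\alpha^2=\alpha+1$ is exactly what lets a single $x_0$ kill both constant terms is a nice articulation of why the golden ratio appears, but the underlying approach is identical.
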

\begin{proof}
Take $u = \frac{1}{\sqrt{-\alpha}} + t^{1/(2n)}$ and $x=\alpha$. Then $(t-\alpha)^2u^2 - t + \alpha$ is not a square in $\mathbb{C}\laurent{t^{1/(2n)}}$, as it has valuation $\frac{1}{2n}$. Since $t^{1/n}$ is a square in $\mathbb{C}\laurent{t^{1/(2n)}}$, it follows that $t^{1/n}((t-\alpha)^2u^2 - t+\alpha)$ is also not a square in $\mathbb{C}\laurent{t^{1/(2n)}}$. Now, to conclude the proof, it suffices to show that there exist $y$ and $z$ in $\mathbb{C}\laurent{t^{1/(2n)}}$ such that $u,x,y,z$ satisfy Equations (\ref{equation:shifted_lafon}). To see this, it suffices to prove that the left-hand sides of the equations are not squares in $\mathbb{C}\laurent{t^{1/(2n)}}$. This can be done by calculating the valuations of the left-hand sides and noticing that both of them are equal to~$\frac{1}{2n}$. 
\end{proof}

Shifting the equations above by $\alpha$ immediately gives the following result about the equations defining Lafon's Enriques surface.

\begin{lemma} \label{lemma:nonliftability2}
Let $n$ be a positive integer and $\alpha \in \mathbb{C}$ such that $\alpha^2 = \alpha+1$. Then, there exist $u,x,y,z$ in $\mathbb{C}\laurent{(t+\alpha)^{1/(2n)}}$ such that 
\begin{equation*}
x^2 - tu^2 + t = ( t^2u^2 -t) y^2 \neq 0 \quad \text{and} \quad x^2 - 2tu^2 + t^{-1} = t( t^2u^2 - t) z^2 \neq 0
\end{equation*}
and such that neither $ t^2u^2 - t$ nor $(t+\alpha)^{1/n}( t^2u^2 - t)$ is a square in $\mathbb{C}\laurent{(t+\alpha)^{1/(2n)}}$.
\end{lemma}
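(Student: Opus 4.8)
The plan is to deduce Lemma~\ref{lemma:nonliftability2} from Lemma~\ref{lemma:nonliftability1} by the substitution $t \mapsto t+\alpha$, which is exactly what the sentence preceding the statement (``shifting the equations above by $\alpha$'') refers to. Concretely, I would first observe that the two systems of equations are carried into one another by this shift of the base variable, and then transport the two non-square conditions along the corresponding isomorphism of Laurent series fields.

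First I would rewrite the sides appearing in Lemma~\ref{lemma:nonliftability1} by grouping the terms involving $\alpha$: using $-tu^2 + \alpha u^2 = -(t-\alpha)u^2$, together with $t-\alpha = (t-\alpha)$ and $-t+\alpha = -(t-\alpha)$, the first equation becomes
\[ x^2 - (t-\alpha)u^2 + (t-\alpha) = \bigl((t-\alpha)^2 u^2 - (t-\alpha)\bigr) y^2, \]
and, using $-2tu^2 + 2\alpha u^2 = -2(t-\alpha)u^2$, the second becomes
\[ x^2 - 2(t-\alpha)u^2 + \frac{1}{t-\alpha} = (t-\alpha)\bigl((t-\alpha)^2 u^2 - (t-\alpha)\bigr) z^2. \]
In other words, writing $s = t-\alpha$, the equations of Lemma~\ref{lemma:nonliftability1} are precisely the defining equations of Lafon's surface appearing in Lemma~\ref{lemma:nonliftability2}, but expressed in the variable $s$.

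Next I would make the substitution precise at the level of fields. Let $\tau$ denote the base variable of Lemma~\ref{lemma:nonliftability1}, so that the elements produced there lie in $\CC\laurent{\tau^{1/(2n)}}$, and consider the isomorphism of fields
\[ \CC\laurent{\tau^{1/(2n)}} \xrightarrow{\;\sim\;} \CC\laurent{(t+\alpha)^{1/(2n)}}, \qquad \tau^{1/(2n)} \longmapsto (t+\alpha)^{1/(2n)}, \]
under which $\tau \mapsto t+\alpha$ and hence $s = \tau-\alpha \mapsto t$. Applying this isomorphism to the $u,x,y,z$ of Lemma~\ref{lemma:nonliftability1} produces elements of $\CC\laurent{(t+\alpha)^{1/(2n)}}$ satisfying the two equations of Lemma~\ref{lemma:nonliftability2} (with the correct nonvanishing, since a field isomorphism sends nonzero elements to nonzero elements). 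As being a square is preserved by a field isomorphism, and since $u^2 s^2 - s \mapsto u^2 t^2 - t$ and $\tau^{1/n}(u^2 s^2 - s) \mapsto (t+\alpha)^{1/n}(u^2 t^2 - t)$, the two non-square assertions of Lemma~\ref{lemma:nonliftability1} transform exactly into those of Lemma~\ref{lemma:nonliftability2}.

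There is essentially no genuine obstacle: the statement is a formal translate of Lemma~\ref{lemma:nonliftability1} under an affine change of the base variable, and the only thing to verify is the bookkeeping of the regrouping above, together with the (tautological) fact that the two Laurent series fields are identified by sending one chosen uniformizer to the other. The real content—exhibiting local points for which the two relevant quantities fail to be squares—is entirely contained in Lemma~\ref{lemma:nonliftability1}; Lemma~\ref{lemma:nonliftability2} merely recenters it at $t=-\alpha$, which is the fibre that matters for the non-isotriviality argument.
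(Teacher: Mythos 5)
Your proposal is correct and takes exactly the same route as the paper: the paper's entire proof is the one-line observation that the change of variables $t \mapsto t-\alpha$ turns Lemma~\ref{lemma:nonliftability1} into the present statement. Your write-up merely makes explicit the regrouping of terms and the induced isomorphism of Laurent series fields $\mathbb{C}\laurent{\tau^{1/(2n)}} \cong \mathbb{C}\laurent{(t+\alpha)^{1/(2n)}}$ (under which squares and non-squares correspond), which is a harmless elaboration of the same argument.
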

\begin{proof}
After performing the change of variables $t \mapsto t-\alpha$, this becomes Lemma~\ref{lemma:nonliftability1}.
\end{proof}

To show that a given K3 double cover of Lafon's Enriques surface $S$ over $\mathbb{Q}(t)$ does not extend to an \'etale cover of any smooth projective model of $S$ over $\mathbb{P}^1$, we will use that non-liftability of ``local points'' on the generic fibre is an obstruction.

\begin{lemma}\label{lemma:nonliftability_extendability}
Let $B$ be an integral normal noetherian one-dimensional scheme with function field $K$ such that $B$ has no nontrivial finite \'etale covers. Let $X \to Y$ be a surjective finite \'etale morphism of proper $K$-schemes. If there is a $K$-point $p \in Y(K)$ such that $X_p(K) = \emptyset$, then $X \to Y$ does not extend to a finite \'etale morphism $\mathcal{X}\to \mathcal{Y}$ of proper $B$-schemes.
\end{lemma}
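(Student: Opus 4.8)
The plan is to argue by contraposition. Assuming that $X \to Y$ extends to a finite \'etale morphism $\mathcal{X} \to \mathcal{Y}$ of proper $B$-schemes (so that $\mathcal{Y}_K = Y$, $\mathcal{X}_K = X$, and the morphism restricts to $X \to Y$ over $K$), I would show that \emph{every} $K$-point $p \in Y(K)$ lifts to a $K$-point of its fibre $X_p := X \times_{Y,p} \Spec K$; this contradicts the hypothesis that $X_p(K) = \emptyset$ for some $p$. Note that $X_p$ is finite \'etale over $K$, and it is non-empty precisely because $X \to Y$ is surjective, so ``$p$ lifts'' genuinely means exhibiting a point of $X_p(K)$.

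The first and geometrically essential step is to spread out the $K$-point $p$ to a \emph{section} $\sigma \colon B \to \mathcal{Y}$ of $\mathcal{Y} \to B$. To this end I would let $Z \subseteq \mathcal{Y}$ be the reduced closure of the image of $p \colon \Spec K \to \mathcal{Y}$. Since $p$ is a $K$-point lying over the generic point of $B$, its image has residue field $K$, so $Z$ is integral, dominates $B$, and satisfies $K(Z) = K = K(B)$; in particular $Z \to B$ is proper, dominant and birational. As $Z$ is irreducible and dominates $B$, no fibre of $Z \to B$ can be one-dimensional, so the morphism has finite fibres; being proper as well, it is finite. Writing $Z = \underline{\Spec}_B \mathcal{A}$, the sheaf $\mathcal{A}$ is then a coherent $\mathcal{O}_B$-algebra which is integral over $\mathcal{O}_B$ and contained in the function field $K$. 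Because $B$ is normal, the integral closure of $\mathcal{O}_B$ inside $K$ is $\mathcal{O}_B$ itself, whence $\mathcal{A} = \mathcal{O}_B$ and $Z \to B$ is an isomorphism. Its inverse is the desired section $\sigma \colon B \to \mathcal{Y}$, restricting to $p$ over the generic point.

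With the section in hand, the concluding step is a standard monodromy argument. I would pull back the finite \'etale cover $\mathcal{X} \to \mathcal{Y}$ along $\sigma$ to obtain a finite \'etale cover $\mathcal{X}_\sigma := \mathcal{X} \times_{\mathcal{Y}, \sigma} B \to B$, whose generic fibre is canonically identified with $X_p$ and which is therefore non-empty (as $B$ is connected and $X_p \neq \emptyset$). Since $B$ has no nontrivial finite \'etale covers, every connected component of $\mathcal{X}_\sigma$ maps isomorphically onto $B$; choosing any one such component yields a section $B \to \mathcal{X}_\sigma$. Restricting this section to the generic point of $B$ produces a point of $X_p(K)$, contradicting $X_p(K) = \emptyset$ and completing the argument.

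I expect the main obstacle to be the spreading-out step, namely verifying that the closure $Z$ maps \emph{isomorphically} onto $B$ rather than merely finitely and birationally. This is exactly where the hypotheses are used in full force: the one-dimensionality of $B$ forces $Z \to B$ to be finite, and the normality of $B$ upgrades the finite birational morphism to an isomorphism, producing the global section needed to invoke the triviality of finite \'etale covers of $B$. By contrast, the identification of the generic fibre of $\mathcal{X}_\sigma$ with $X_p$ and the final monodromy argument are routine once the section exists.
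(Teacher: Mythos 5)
Your proposal is correct and follows essentially the same route as the paper: extend $p$ to a section $B \to \mathcal{Y}$, pull back the finite \'etale cover along it, and use that $B$ has no nontrivial finite \'etale covers to produce a section hitting $X_p(K)$. The only difference is cosmetic: where you carry out the spreading-out step by hand (closure of the point, properness plus quasi-finiteness gives finiteness, then normality of $B$ upgrades the finite birational map to an isomorphism), the paper compresses exactly this into a single appeal to the valuative criterion of properness.
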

\begin{proof}
Suppose that $\mathcal{X}\to \mathcal{Y}$ is a finite \'etale morphism of proper $B$-schemes extending the $K$-morphism $X \to Y$. By the valuative criterion of properness, we may consider $p \in Y(K)$ as a section $\epsilon_p \colon B \to \mathcal{Y}$ of $\mathcal{Y}\to B$. Since $B$ has no nontrivial finite \'etale covers, the finite \'etale morphism $B \times_{\epsilon_p, \mathcal{Y}}\mathcal{X} \to B$ has a section, say $B \to \mathcal{X}$. In particular, we see that $X_p(K)$ is non-empty. 
\end{proof}

\begin{lemma}\label{lemma:k3cover_doesnt_extend}
Let $S$ be Lafon's Enriques surface over $\mathbb{Q}(t)$. Let $\alpha$ be a complex number such that $\alpha^2 = \alpha+1$. Let $n \geq 1$ be an integer and let $\mathcal{X}$ be a proper model of $S_{\mathbb{C}\laurent{(t+\alpha)^{1/n}}}$ over $\mathbb{C}\powerseries{(t+\alpha)^{1/n}}$. If $\mathcal{Y} \to \mathcal{X}$ is a finite \'etale cover, then $\mathcal{Y}_{\overline{\mathbb{C}\laurent{t+\alpha}}}$ is not the K3 double cover of $S_{\overline{\mathbb{C}\laurent{t+\alpha}}}$. 
\end{lemma}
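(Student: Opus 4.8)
The plan is to apply the non-liftability criterion of Lemma~\ref{lemma:nonliftability_extendability}, using the field $B = \Spec \mathbb{C}\powerseries{(t+\alpha)^{1/n}}$ as the base and the explicit local points furnished by Lemma~\ref{lemma:nonliftability2}. The base ring $\mathbb{C}\powerseries{(t+\alpha)^{1/n}}$ is a complete discrete valuation ring with algebraically closed residue field, hence has no nontrivial finite \'etale covers, so the hypotheses of Lemma~\ref{lemma:nonliftability_extendability} on $B$ are satisfied. The strategy is to show that the K3 double cover of $S$ over the \emph{generic fibre} $K := \mathbb{C}\laurent{(t+\alpha)^{1/n}}$ admits a $K$-rational point of $S$ that does \emph{not} lift to a $K$-point of the cover, whence by the lemma the cover cannot extend to a finite \'etale morphism $\mathcal{Y} \to \mathcal{X}$.

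First I would make explicit the two K3 double covers of an Enriques surface over $K$. An Enriques surface $S$ has a canonical K3 double cover classified by the nontrivial $2$-torsion line bundle (the canonical bundle); over the field $K$ the twists of this cover by elements of $K^\ast/(K^\ast)^2$ give a family of \'etale double covers, and the one becoming the K3 cover geometrically is determined by the class of the relevant quantity appearing in Lafon's equations. Concretely, using the birational model in Theorem~\ref{thm:lafon}, the double cover is obtained by adjoining a square root of an expression such as $u^2 t^2 - t$ (possibly twisted by $(t+\alpha)^{1/n}$), these being precisely the two quantities singled out in Lemma~\ref{lemma:nonliftability2}. Thus I must identify which square-root extension yields the geometric K3 cover over $\overline{\mathbb{C}\laurent{(t+\alpha)}}$, and confirm that both candidate covers correspond exactly to the two expressions $u^2 t^2 - t$ and $(t+\alpha)^{1/n}(u^2 t^2 - t)$.

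Second, Lemma~\ref{lemma:nonliftability2} produces $u,x,y,z \in \mathbb{C}\laurent{(t+\alpha)^{1/(2n)}}$ satisfying the defining equations of Lafon's surface; but since these coordinates lie in the ramified quadratic extension $\mathbb{C}\laurent{(t+\alpha)^{1/(2n)}}$ of $K$, I would need to arrange (or observe) that the underlying point of $S$ is actually defined over $K = \mathbb{C}\laurent{(t+\alpha)^{1/n}}$ itself, while the square roots needed to lift to \emph{either} K3 cover do not exist in $K$. The content of Lemma~\ref{lemma:nonliftability2} is exactly that neither $u^2 t^2 - t$ nor $(t+\alpha)^{1/n}(u^2 t^2 - t)$ is a square in $\mathbb{C}\laurent{(t+\alpha)^{1/(2n)}}$, hence \emph{a fortiori} not a square in the smaller field $K$; this means the point fails to lift along \emph{both} of the two double covers simultaneously. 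So I would take $Y = S_K$, let $X \to Y$ be whichever of the two covers becomes the K3 cover geometrically, exhibit the point $p \in S(K)$ coming from Lemma~\ref{lemma:nonliftability2}, and conclude $X_p(K) = \emptyset$ because lifting $p$ requires a square root that does not exist in $K$.

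Finally, I would invoke Lemma~\ref{lemma:nonliftability_extendability} with this $B$, $X \to Y$, and $p$ to conclude that $X \to Y$ does not extend to a finite \'etale morphism over $B$; translating back, any finite \'etale cover $\mathcal{Y} \to \mathcal{X}$ has geometric generic fibre that is \emph{not} the K3 double cover, which is the assertion. The main obstacle I anticipate is the bookkeeping in the second step: matching the abstract two \'etale double covers of the Enriques surface $S_K$ (there are exactly two nontrivial classes becoming the K3 cover geometrically, differing by the class of $(t+\alpha)^{1/n}$) with the two concrete square-root expressions in Lemma~\ref{lemma:nonliftability2}, and verifying that the point of Lemma~\ref{lemma:nonliftability2} descends to a genuine $K$-point of $S$ rather than only an $\mathbb{C}\laurent{(t+\alpha)^{1/(2n)}}$-point. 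Getting the twist exactly right — so that the two expressions $u^2t^2 - t$ and $(t+\alpha)^{1/n}(u^2 t^2 - t)$ account for \emph{both} geometric K3 covers and not some irrelevant double cover — is the delicate part, and it is precisely why Lemma~\ref{lemma:nonliftability2} was engineered to rule out \emph{both} quantities at once.
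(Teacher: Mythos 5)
Your overall route --- identify the two $K$-forms of the K3 double cover with the two square-root expressions, then combine Lemma~\ref{lemma:nonliftability2} with the non-liftability criterion of Lemma~\ref{lemma:nonliftability_extendability} --- is the paper's route, but the pivotal middle step of your plan fails, and it cannot be repaired in the form you propose. The points produced by Lemma~\ref{lemma:nonliftability2} are \emph{not} $K$-points for $K = \mathbb{C}\laurent{(t+\alpha)^{1/n}}$: tracing back through Lemma~\ref{lemma:nonliftability1}, the coordinate is $u = \tfrac{1}{\sqrt{-\alpha}} + (t+\alpha)^{1/(2n)}$, which genuinely involves $(t+\alpha)^{1/(2n)}$, so the point is defined only over the ramified quadratic extension $K' = \mathbb{C}\laurent{(t+\alpha)^{1/(2n)}}$ and there is nothing to ``arrange or observe'' that would descend it to $K$. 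Worse, the property you want --- a single $K$-point of $S$ failing to lift to \emph{both} forms --- is impossible in principle: the two forms differ by twisting by the nontrivial class of $H^1(K,\mathbb{Z}/2) \cong K^\ast/(K^\ast)^2 \cong \mathbb{Z}/2$, so for any $K$-point $p$ the two fibres are degree-two \'etale $K$-algebras whose classes differ by the nontrivial element of a group of order two; hence exactly one of them is split, i.e.\ every $K$-point of $S$ lifts to exactly one of the two forms. Your ``a fortiori not a square in the smaller field $K$'' remark is vacuous here, because the quantities $u^2t^2-t$ and $(t+\alpha)^{1/n}(u^2t^2-t)$, evaluated at the point in question, do not lie in $K$ at all.

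The repair --- and this is what the paper's proof actually does --- is to enlarge the base instead of descending the point. Suppose $\mathcal{Y} \to \mathcal{X}$ is finite \'etale over $B = \Spec \mathbb{C}\powerseries{(t+\alpha)^{1/n}}$ with $\mathcal{Y}_{\overline{K}}$ the K3 double cover, and base change along $B' = \Spec \mathbb{C}\powerseries{(t+\alpha)^{1/(2n)}} \to B$. Then $\mathcal{X}_{B'}$ is a proper model of $S_{K'}$ over $B'$, the morphism $\mathcal{Y}_{B'} \to \mathcal{X}_{B'}$ is still finite \'etale, and $B'$ (a complete discrete valuation ring with residue field $\mathbb{C}$) has no nontrivial finite \'etale covers, so Lemma~\ref{lemma:nonliftability_extendability}, applied over $B'$, forces every $K'$-point of $S$ to lift to $\mathcal{Y}_{K'}$. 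But whichever of the two $K$-forms $\mathcal{Y}_K$ is, the lifting condition at the point of Lemma~\ref{lemma:nonliftability2} is that $u^2t^2-t$, respectively $(t+\alpha)^{1/n}(u^2t^2-t)$, be a square \emph{in $K'$}, and the lemma rules out both --- this is precisely why it was engineered to exclude both expressions over $\mathbb{C}\laurent{(t+\alpha)^{1/(2n)}}$ rather than over $K$. (Over $K'$ the two forms become isomorphic, since $(t+\alpha)^{1/n}$ is a square there; that is what makes a single obstructing point possible, in contrast with the situation over $K$.) This yields the desired contradiction.
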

\begin{proof}
Over the affine part of the Enriques surface $S_{\overline{\mathbb{C}\laurent{t+\alpha}}}$ which is described by the equation in Theorem \ref{thm:lafon}, the K3 double cover is described by the following equations in the variables $(u,w,x,y,z)$ (see \cite[Proof of Prop~1.1]{Lafon}):
\begin{eqnarray*}
x^2 - tu^2 + t &=& (t^2 u^2 - t)y^2 \neq 0 \\
x^2 - 2tu^2 + t^{-1} &= & t(t^2u^2 -t)z^2 \neq 0 \\
w^2 &=& t^2 u^2 - t
\end{eqnarray*}
This K3 double cover has, up to isomorphism, two forms over the non-algebraically closed field $\mathbb{C}\laurent{(t+\alpha)^{1/n}}$. The first form is given by the equations above and 
the second form is obtained by replacing the third equation above with the equation
\begin{equation*}
w^2 = (t+\alpha)^{1/n} (t^2 u^2 - t).
\end{equation*}
Now observe that by Lemma \ref{lemma:nonliftability2}, there are, for both $\mathbb{C}\laurent{(t+\alpha)^{1/n}}$-forms of the K3 double cover, $\mathbb{C}\laurent{(t+\alpha)^{1/2n}}$\nobreakdash-rational points of $S_{\mathbb{C}\laurent{(t+\alpha)^{1/n}}}$ which cannot be lifted to that form of the K3 double cover. By Lemma \ref{lemma:nonliftability_extendability}, this implies that both forms of the K3 double cover do not extend to an \'etale double cover defined over $\mathbb{C}\powerseries{(t+\alpha)^{1/n}}$. This finishes the proof.
\end{proof}

\begin{proof}[Proof of Theorem \ref{thm:lafon_is_non_isotrivial}] 
We argue by contradiction. Suppose that $S$ is isotrivial. Let $\alpha$ be a complex number such that $\alpha^2 = \alpha+1$. Let $K=\mathbb{C}\laurent{t+\alpha}$ be the completion of $\mathbb{C}(t)$ with respect to the valuation associated to $t+\alpha$. For every integer $n\geq 1$, let $K_n = \mathbb{C}\laurent{(t+\alpha)^{1/n}}$. Note that $K_n$ is the unique field extension of $K$ of degree $n$. Now, since $S$ is isotrivial, there is an integer $n\geq 1$ such that $S_{K_n}$ can be defined over $\mathbb{C}$, i.e., there is an Enriques surface $S_0$ over $\mathbb{C}$ and an isomorphism $S_{K_n} \cong S_0\otimes_{\mathbb{C}} K_n$. Let $\mathcal{X} = S_0 \times \Spec \mathbb{C}\powerseries{(t+\alpha)^{1/n}}$ and note that $\mathcal{X}$ is a smooth proper model of $S_{K_n}$ over $\mathbb{C}\powerseries{(t+\alpha)^{1/n}}$. Note that $S_0$ has a K3 double cover, say $S'\to S_0$. Let $\mathcal{Y} = S' \times \mathbb{C}\powerseries{(t+\alpha)^{1/n}}$. Then $\mathcal{Y} \to \mathcal{X}$ is a finite \'etale morphism and $\mathcal{Y}_{\overline{\mathbb{C}\laurent{t+\alpha}}}$ is the K3 double cover of $S_{\overline{\mathbb{C}\laurent{t+\alpha}}}$. This contradicts Lemma \ref{lemma:k3cover_doesnt_extend}.
\end{proof}

\begin{remark}\label{remark:lafon_non_isotrivial}
Our proof of non-isotriviality of Lafon's Enriques surface exploits an interesting connection between potential good reduction and isotriviality. In fact, our proof of Theorem \ref{thm:lafon_is_non_isotrivial} can be reformulated as follows. If Lafon's Enriques surface $S$ over $\mathbb{Q}(t)$ were to be isotrivial, then it would have potentially good reduction. In particular, every K3 double cover of $S$ would extend over such a good model. By Lemma \ref{lemma:nonliftability_extendability}, rational points (valued in $\mathbb{C}\laurent{(t+\alpha)^{1/n}}$) would lift on the generic fibre to this K3 double cover. But this is not the case when $\alpha$ is the golden ratio by Lemma \ref{lemma:nonliftability2}.
\end{remark}

\section{Fundamental groups}

In this section we determine the fundamental group of any regular projective model $\mathcal{X}$ of Lafon's Enriques surface $S$. We will use a well-known exact sequence of fundamental groups associated to a fibration, analogous to the homotopy exact sequence for fibrations in algebraic topology. Since we will use this result also in the purely complex-analytic setting, we give the definition of a fibration with no divisible fibres in codimension one in the complex-analytic setting.

\begin{definition}
Let $f \colon X \to Y$ be a proper surjective morphism of complex manifolds with connected fibres. We say that $f$ has \emph{no divisible fibres in codimension one} if, for every prime divisor $D\subset Y$, there is no integer $m>1$ such that, for every prime divisor $E$ of $X$ surjecting onto $D$, the multiplicity of $E$ in $f^*(D)$ is divisible by $m$. 
\end{definition}

The following result is well-known; due to lack of reference we include a self-contained proof.  We stress that some of the arguments in our proof below already appear in the literature (see for example the proof of \cite[Proposition~12.9]{Ca11}). 

\begin{lemma}\label{lemma:nori}  
Let $f \colon X \to Y$ be a proper surjective morphism of complex manifolds with connected fibres. Suppose that $f$ does not have any divisible fibres in codimension one. Then, for every point $y$ in $Y$, the sequence of topological fundamental groups
\[ \pi_1(X_y)\to \pi_1(X)\to \pi_1(Y)\to 1 \]
is exact.
\end{lemma}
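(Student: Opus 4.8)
The plan is to reduce the statement to the classical homotopy exact sequence of a fibre bundle and then to control what happens when the non-reduced fibres over codimension-one points are filled back in; the ``no divisible fibres'' hypothesis will be exactly what makes the honest $\pi_1(Y)$ appear (rather than an orbifold fundamental group). First I would invoke generic smoothness, valid since we are in characteristic zero: there is a proper analytic subset $Z \subsetneq Y$ such that $f$ is a holomorphic submersion over $Y^{\circ} := Y \setminus Z$, so by Ehresmann's fibration theorem $f^{\circ}\colon X^{\circ} := f^{-1}(Y^{\circ}) \to Y^{\circ}$ is a locally trivial fibre bundle with connected fibre $F = X_{y_0}$ for general $y_0 \in Y^{\circ}$. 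The homotopy exact sequence of this bundle then yields that
\[ \pi_1(F) \to \pi_1(X^{\circ}) \xrightarrow{\ f^{\circ}_{*}\ } \pi_1(Y^{\circ}) \to 1 \]
is exact, surjectivity on the right coming from connectedness of $F$ (i.e. $\pi_0(F)=\ast$).

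Next I would compare the open pieces with the total spaces. Since $Z$ and $f^{-1}(Z)$ are proper analytic subsets, hence of real codimension $\geq 2$, the inclusions induce surjections $\pi_1(X^{\circ}) \twoheadrightarrow \pi_1(X)$ and $\pi_1(Y^{\circ}) \twoheadrightarrow \pi_1(Y)$, whose kernels are the normal subgroups $N_X$ and $N_Y$ generated by the meridians $\delta_E$ (resp. $\gamma_D$) around the codimension-one components $E$ of $f^{-1}(Z)$ (resp. components $D$ of $Z$). The resulting square commutes, and surjectivity of $f^{\circ}_{*}$ immediately gives that $\pi_1(X) \to \pi_1(Y)$ is surjective, which is exactness at $\pi_1(Y)$. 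For exactness at $\pi_1(X)$, a short group-theoretic manipulation (using $\ker f^{\circ}_{*} = \mathrm{im}(\pi_1(F))$ and surjectivity of $f^{\circ}_{*}$) shows that the desired equality $\ker\big(\pi_1(X)\to\pi_1(Y)\big) = \mathrm{im}\big(\pi_1(F)\to\pi_1(X)\big)$ is equivalent to the single identity $f^{\circ}_{*}(N_X) = N_Y$.

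The heart of the proof, and the step I expect to be the main obstacle, is this identity $f^{\circ}_{*}(N_X) = N_Y$, and it is here that the hypothesis is used. Restricting to a small disk $\Lambda$ meeting $D$ transversally at a general point, the local analytic normal form of $f$ along a general point of a component $E_j$ of multiplicity $a_j$ is $z \mapsto z^{a_j}$ in the transverse coordinate, so $f^{\circ}_{*}(\delta_{E_j}) = \gamma_D^{a_j}$ inside $\pi_1(\Lambda^{\ast}) = \langle \gamma_D\rangle \cong \mathbb{Z}$ (while $f^{\circ}_{*}(\delta_E) = 1$ for the $f$-exceptional components). The inclusion $f^{\circ}_{*}(N_X)\subseteq N_Y$ is then automatic; for the reverse inclusion the hypothesis that $X\to Y$ has no divisible fibres in codimension one says precisely that $\gcd_j a_j = 1$ for every $D$, so $\gamma_D$ itself lies in the subgroup of $\langle\gamma_D\rangle$ generated by the $\gamma_D^{a_j}$, giving $\gamma_D \in f^{\circ}_{*}(N_X)$ and hence $N_Y \subseteq f^{\circ}_{*}(N_X)$. (Were the fibre divisible with $\gcd = m > 1$, one would only recover $\gamma_D^{m}$, producing an orbifold fundamental group instead of $\pi_1(Y)$, which is the conceptual reason the hypothesis is sharp.) The delicate technical points to be pinned down are the claim that $\ker(\pi_1(X^{\circ})\to\pi_1(X))$ and $\ker(\pi_1(Y^{\circ})\to\pi_1(Y))$ are normally generated by meridians even when the divisors are singular, and the transverse local normal form of $f$ at a general point of each $E_j$.

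Finally, to upgrade from a general fibre to \emph{every} $y \in Y$, I would use that for a small ball $U \ni y$ the fibre $X_y$ is a deformation retract of $f^{-1}(U)$, which follows from the local topological triviality of proper analytic maps (Thom's isotopy lemmas and the resulting conical structure). Choosing a general $y' \in U \cap Y^{\circ}$, so that $X_{y'} \cong F$ sits inside $f^{-1}(U) \simeq X_y$, one obtains the chain of inclusions $\mathrm{im}(\pi_1(F)\to\pi_1(X)) \subseteq \mathrm{im}(\pi_1(X_y)\to\pi_1(X)) \subseteq \ker(\pi_1(X)\to\pi_1(Y))$. Since the two ends already coincide by the previous step, all three agree, which is exactly exactness at $\pi_1(X)$ for the chosen $y$, completing the proof.
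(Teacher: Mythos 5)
Your proposal is correct and follows essentially the same route as the paper's proof: Ehresmann's theorem over the smooth locus, comparison of $\pi_1$ of the open parts with the full spaces via meridian loops (your identity $f^{\circ}_{*}(N_X)=N_Y$ is exactly the paper's surjectivity of $K_1\to K_2$, proved by the same $\gcd$/Bézout trick on the multiplicities $a_i$), and finally a deformation-retract neighbourhood of the possibly singular fibre $X_y$ containing a nearby smooth fibre to handle arbitrary $y$. The only caveat — that the image of a meridian of $E_j$ is a priori only conjugate to $\gamma_D^{a_j}$ in $\pi_1(Y^{\circ})$, which is harmless because the image of the normal subgroup $N_X$ under the surjection $f^{\circ}_{*}$ is again normal — is glossed over identically in the paper.
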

\begin{proof}
(The following argument is inspired by the proof of Nori's \cite[Lemma~1.5]{Nori83}, where the stronger assumption that $X \to Y$ has no nowhere reduced fibres in codimension one is employed.) We first prove the lemma when $X_y$ is smooth.    In this case, the result  can also be deduced from \cite[Proposition~12.9]{Ca11} (see Remark \ref{remark:12.9}).   For the reader's comfort, we provide a self-contained proof.

Let $y \in Y$ be a point such that the fibre $X_y$ is smooth.  Let $Y^o\subset Y$ be the maximal open subset of $Y$ such that $f$ is smooth over $Y^o$. Let $X^o = f^{-1}(Y^o)$. Since the induced proper surjective morphism $f|_{X^o} \colon X^o \to Y^o$ is smooth, it follows from Ehresmann's theorem that $f|_{X^o}$ is a locally trivial fibration, so that the sequence 
\[ \pi_1(X_y)\to \pi_1(X^o) \to \pi_1(Y^o) \to 1 \]
is exact. Let $K_1$ (resp.\ $K_2$) be the kernel of $\pi_1(X^o)\to \pi_1(X)$ (resp.\ $\pi_1(Y^o)\to \pi_1(Y)$). Note that the homomorphism $\pi_1(X^o) \to \pi_1(Y^o)$ restricts to a morphism $K_1 \to K_2$. The following diagram commutes:

\begin{equation*} \begin{tikzcd}
 & K_1 \ar[r] \ar[d] & K_2 \ar[d] & \\
\pi_1(X_y) \ar[equal, d] \ar[r] & \pi_1(X^o) \ar[r] \ar[d] & \pi_1(Y^o) \ar[r] \ar[d] & 1  \\
\pi_1(X_y) \ar[r] & \pi_1(X) \ar[r] & \pi_1(Y) \ar[r] & 1
\end{tikzcd} \end{equation*}

We claim that $K_1 \to K_2$ is surjective. Indeed, first note that $K_2$ is generated by loops around the finitely many irreducible closed subsets of $Y \setminus Y^o$ of codimension one (in $Y$). Thus, to show the surjectivity of $K_1 \to K_2$, it suffices to show that for every prime divisor $D \subset Y$ contained in $Y \setminus Y^o$, the class of every simple loop $\gamma$ in $Y^o$ around $D$ can be lifted to an element of $K_1$, i.e., can be lifted to the class of a loop in $X^o$ contractible in $X$. To prove this, we will use that $f$ has no divisible fibres in codimension one. Write $f^{-1}(D) = \sum_{i} a_i F_i + R$, where the $F_i$ surject onto $D$ and the components of $R$ get contracted under $f$. Since $\gcd(a_i) = 1$, there are integers $\lambda_i$ such that $\sum_i \lambda_i a_i = 1$. Now, let $\gamma_i$ be a simple loop around $F_i$ in $X^o$. Note that $\gamma_i$ is contractible in $X$ and that the image of $\gamma_i$ under $f$ is a loop winding $a_i$ times around $D$, so that the class $[\gamma_i]$ maps to $a_i[\gamma]$. Thus, the element $\prod_i [\gamma_i]^{\lambda_i}$ of $\pi_1(X^o)$ maps to $[\gamma]$ in $\pi_1(X)$. Since $\prod_i [\gamma_i]^{\lambda_i}$ is an element of $K_1$ mapping to $[\gamma]$, this finishes the proof of the claim.

Since $K_1\to K_2$ is surjective, $\pi_1(X^o)\to \pi_1(X)$ is surjective and $\pi_1(Y^o)\to \pi_1(Y)$ is surjective, a simple diagram chase shows that the exactness of the middle sequence implies the exactness of the bottom sequence. This proves the lemma in the case that $X_y$ is smooth.

To reduce the general case to the smooth case, let $\mathcal{T} \subset X$ be an open neighbourhood of $X_y$ such that $X_y \subset \mathcal{T}$ is a deformation retract (this exists by \cite[Theorem I.8.8]{BHPV}). We have to show that any element $\alpha$ of the kernel of $\pi_1(X) \to \pi_1(Y)$ is in the image of $\pi_1(X_y) \to \pi_1(X)$. To do so, observe that the properness of the map $X \to Y$ implies that $\mathcal{T}$ contains the preimage of an open neighbourhood of $y$ in $Y$. Consequently, $\mathcal{T}$ contains a smooth fibre $F$ of $X \to Y$. From the first part of this proof, we know that $\alpha$ lies in the image of $\pi_1(F) \to \pi_1(X)$. Since the map $\pi_1(F) \to \pi_1(X)$ factors as $\pi_1(F) \to \pi_1(\mathcal{T}) \to \pi_1(X)$, we see that $\alpha$ lies in the image of $\pi_1(\mathcal{T}) \to \pi_1(X)$. Since $\pi_1(X_y) = \pi_1(\mathcal{T})$, we see that $\alpha$ lies in the image of $\pi_1(X_y) \to \pi_1(X)$, as desired.
\end{proof}
 
\begin{remark}\label{remark:12.9}
If $X_y$ is smooth, the preceding Lemma \ref{lemma:nori} can also be deduced from \cite[Proposition~12.9]{Ca11} as follows. (Note that in \emph{loc.\ cit.}\ it is implicitly assumed that $\Delta_Y$ is the orbifold base of $f:(X,\Delta_X)\to Y$; see middle of \cite[p.~910]{Ca11}.)  Let $Y^*$ be the Zariski open subset of $Y$ over which $f$ has equidimensional fibres.  Define $X^\ast = f^{-1}(Y^\ast)$. Then, the fibration $f^*\colon X^* \to Y^*$ is   neat (see   \cite[Definition~4.8]{Ca11}). Since $f$ does not have any divisible fibres in codimension one,  it follows from \cite[Proposition~12.9]{Ca11} that the sequence   $\pi_1(X_y)\to \pi_1(X^*)\to \pi_1(Y^*)\to 1$ is exact for any general point $y$ of $Y$.  Since  the complement of $Y^*$ in $Y$ has codimension at least $2$, we have  that $\pi_1(Y^*)=\pi_1(Y)$, so that a straightforward diagram chase then shows that the sequence $\pi_1(X_y)\to \pi_1(X)\to \pi_1(Y)\to 1$ is also exact for any general point $y$ in $Y$.
\end{remark}

We are now ready to state and prove the main result of this section.

\begin{theorem}\label{thm:lafon_pi1}
Let $X \to \mathbb{P}^1_{\mathbb{Q}}$ be a morphism with $X$ a smooth projective threefold such that the generic fibre $X_\eta$ is isomorphic to Lafon's Enriques surface over $\mathbb{Q}(t)$. Let $C$ be a smooth projective curve over an algebraically closed field $K$ of characteristic zero and let $f \colon C \to \mathbb{P}^1_K$ be a finite morphism. Let $X_f$ be a smooth projective variety over $K$ birational to $X \times_{\mathbb{P}^1_{\mathbb{Q}}} C$. Then, the following properties hold.
\begin{enumerate}[nosep]
\item If $K = \mathbb{C}$, the natural homomorphism of topological fundamental groups $\pi_1(X_f) \to \pi_1(C)$ is an isomorphism.
\item The natural homomorphism of \'etale fundamental groups $\pi_1^{\et}(X_f) \to \pi_1^{\et}(C)$ is an isomorphism.
\end{enumerate}
\end{theorem}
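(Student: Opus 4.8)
The plan is to deduce part~(ii) from part~(i) and to concentrate on the topological statement over $\mathbb{C}$. Over $\mathbb{C}$ the \'etale fundamental group is the profinite completion of the topological one (Riemann existence, \cite[Expos\'e~XII]{SGA1}), so an isomorphism $\pi_1(X_f)\xrightarrow{\sim}\pi_1(C)$ induces one on profinite completions, giving (ii) for $K=\mathbb{C}$. For a general algebraically closed field $K$ of characteristic zero I would descend the data $(C,f,X_f)$ to a subfield $k_0\subseteq K$ finitely generated over $\mathbb{Q}$, embed $k_0$ into $\mathbb{C}$, and invoke the invariance of $\pi_1^{\et}$ of a proper scheme under extension of algebraically closed base fields of characteristic zero to reduce (ii) to the case $K=\mathbb{C}$. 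Thus everything comes down to~(i). First observe that $X_f\to C$ is proper surjective with geometrically connected fibres (its generic fibre is Lafon's Enriques surface, hence geometrically connected, so the Stein factorization is an isomorphism). Since $X_f$ is smooth with Enriques generic fibre, Proposition~\ref{prop:enriques_fibration} shows that $X_f\to C$ has \emph{no divisible fibres in codimension one}; crucially this uses only that an Enriques surface over $\mathbb{C}\laurent{s}$ has index one (Remark~\ref{remark:enriques}), so no hypothesis on the ramification of $f$ is needed (in contrast to Theorem~\ref{thm2}, here $f$ is an arbitrary finite morphism). Hence Lemma~\ref{lemma:nori} yields an exact sequence $\pi_1(F)\to\pi_1(X_f)\to\pi_1(C)\to 1$ with $F$ a general smooth Enriques fibre, so that $\pi_1(F)=\mathbb{Z}/2\mathbb{Z}$; it therefore suffices to prove that the generator of $\pi_1(F)$ dies in $\pi_1(X_f)$.

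To kill this generator I would localize at a point $c\in C$ with $f(c)=-\alpha$, where $\alpha$ is the golden ratio ($\alpha^2=\alpha+1$); such a $c$ exists because $f$ is finite surjective. Choosing the general fibre $F$ inside a small neighbourhood $N_c$ of $X_{f,c}$ retracting onto it, the map $\pi_1(F)\to\pi_1(X_f)$ factors through $\pi_1(N_c)=\pi_1(X_{f,c})$; since the image of $\pi_1(F)$ is the kernel of $\pi_1(X_f)\to\pi_1(C)$ (normal, hence base-point independent), it is enough to show that $\pi_1(F)\to\pi_1(N_c)$ is trivial. Over the punctured neighbourhood $N_c^{\ast}$ the map to the punctured disk is a smooth Enriques bundle, so $\pi_1(N_c^{\ast})$ is a central extension of $\mathbb{Z}$ by $\pi_1(F)=\mathbb{Z}/2\mathbb{Z}$, hence abelian and isomorphic to $\mathbb{Z}/2\mathbb{Z}\times\mathbb{Z}$, while $\pi_1(N_c)$ is its quotient by the meridians of the reduced components of $X_{f,c}$. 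A direct analysis of this abelian quotient gives the key equivalence: the generator $g$ of $\pi_1(F)$ vanishes in $\pi_1(N_c)$ if and only if \emph{neither} of the two forms over $\mathbb{C}\laurent{(t+\alpha)^{1/n}}$ (with $n$ the ramification index of $f$ at $c$) of the relative K3 double cover extends to a finite \'etale double cover of $N_c$. Indeed a surviving form yields a character $\pi_1(N_c)\to\mathbb{Z}/2\mathbb{Z}$ nontrivial on $g$; conversely, as $\pi_1(N_c)$ is abelian, any nontrivial $g$ of order two is detected by such a character, which pulls back to one of the two forms.

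Finally I would feed in Lemma~\ref{lemma:k3cover_doesnt_extend}, which asserts that at $t=-\alpha$ neither form of the K3 double cover extends to a finite \'etale cover of any proper model over $\mathbb{C}\powerseries{(t+\alpha)^{1/n}}$. The step I expect to be the main obstacle is transporting this \emph{algebraic} non-extension to the required \emph{topological} one about $N_c$: an analytic finite \'etale double cover of $N_c$ restricting to a form of the K3 cover restricts to a finite topological cover of the projective variety $X_{f,c}$, which algebraizes by Riemann existence to a finite \'etale cover of $\mathcal{X}_0=X_{f,c}$, and then, via invariance of the \'etale site along the special fibre together with Grothendieck's existence theorem for the proper model $\mathcal{X}=X_f\times_C\Spec\mathbb{C}\powerseries{(t+\alpha)^{1/n}}$, gives a finite \'etale $\mathcal{Y}\to\mathcal{X}$ whose geometric generic fibre is the K3 double cover --- contradicting Lemma~\ref{lemma:k3cover_doesnt_extend}. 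Hence neither form extends, so $g=1$ in $\pi_1(N_c)$ and the image of $\pi_1(F)$ in $\pi_1(X_f)$ is trivial; this proves (i), and (ii) follows as above. I would stress that, unlike the degeneration at $t=0$ (where one form of the K3 cover \emph{does} extend, by Section~\ref{section:K3}), it is precisely the arithmetically subtle fibre at $t=-\alpha$ that collapses the fundamental group.
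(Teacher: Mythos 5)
Your strategy is in essence the paper's own: both arguments reduce (ii) to (i) via profinite completion and invariance of $\pi_1^{\et}$ under extensions of algebraically closed fields, and both prove (i) by combining the exact sequence of Lemma~\ref{lemma:nori} for $X_f\to C$ (using non-divisibility of fibres, Proposition~\ref{prop:enriques_fibration}) with a local analysis at a point $c$ above $-\alpha$, where Lemma~\ref{lemma:k3cover_doesnt_extend} forbids any extension of the K3 double cover; your explicit algebraization step (Riemann existence plus Grothendieck's existence theorem over $\mathbb{C}\powerseries{(t+\alpha)^{1/n}}$) is a welcome elaboration of a transition the paper leaves implicit. The difference lies in how the local step is executed, and there your argument has a genuine gap. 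The paper applies Lemma~\ref{lemma:nori} a \emph{second} time, to the restricted family $X_{f,\Delta}\to\Delta$ over a small disk, which yields at once that $\pi_1(F)\to\pi_1(X_{f,\Delta})$ is \emph{surjective}; hence this map is either zero or an isomorphism, and the isomorphism case is exactly what the K3 non-extension excludes. You instead redo this local analysis by hand with meridians, and the converse direction of your ``key equivalence'' rests on the claim that, $\pi_1(N_c)$ being abelian, any nontrivial element $g$ of order two is detected by a character to $\mathbb{Z}/2\mathbb{Z}$. That claim is false: $2\in\mathbb{Z}/4\mathbb{Z}$ is killed by every such character, and your setup can produce precisely this group, e.g. $(\mathbb{Z}/2\mathbb{Z}\times\mathbb{Z})/\langle(1,2)\rangle\cong\mathbb{Z}/4\mathbb{Z}$ with $g=(1,0)$ mapping to the undetectable order-two element. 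Without excluding this scenario, the non-extension of both forms of the K3 cover does not force $g=1$, and the proof does not close.

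The repair uses an input you already have but never invoke at this step: non-divisibility of the fibre $X_{f,c}$ itself. Since $\gcd_i(a_i)=1$ (Proposition~\ref{prop:enriques_fibration}), the subgroup of $\pi_1(N_c^\ast)\cong\mathbb{Z}/2\mathbb{Z}\times\mathbb{Z}$ generated by the meridian classes $(\epsilon_i,a_i)$ contains an element of the form $(\epsilon,1)$; killing it shows that $\pi_1(N_c)$ is generated by the image of $g$, hence cyclic of order at most two, and only then is ``nontrivial of order two $\Rightarrow$ detected by a character'' automatic. This is exactly the content of the surjectivity statement the paper extracts from its second application of Lemma~\ref{lemma:nori} (whose proof is where $\gcd=1$ enters via the same meridian computation). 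Two smaller points: the kernel of $\pi_1(N_c^\ast)\to\pi_1(N_c)$ is normally generated by the meridians of \emph{all} irreducible components of $X_{f,c}$, each mapping to its multiplicity $a_i$ in $\pi_1(\Delta^\ast)=\mathbb{Z}$ --- note that when $f$ ramifies at $c$ the fibre need have \emph{no} reduced component at all, so a statement referring only to reduced components is either vacuous or incorrect; and at the end you should justify that the algebraized cover $\mathcal{Y}\to\mathcal{X}$ has geometrically connected generic fibre (again a consequence of $\gcd=1$), so that its geometric generic fibre is indeed \emph{the} K3 double cover as required to contradict Lemma~\ref{lemma:k3cover_doesnt_extend}.
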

\begin{proof}
To prove $(i)$, assume $K = \mathbb{C}$. Let $\alpha \in \mathbb{C}$ such that $\alpha^2 = \alpha + 1$ and let $c \in C$ be a preimage of $-\alpha$ under $C \to \mathbb{P}^1_\mathbb{C}$.
We consider the analytification $X_f^{\an}\to C^{\an}$ of $X_f\to C$.
Let $\Delta \subseteq C^{\an}$ be an open disk containing $c$ and let $c' \in \Delta$ be a point over which the fibre of $X_f \to C$ is smooth. Let $X_{f,\Delta}$ be the inverse image of $\Delta \subset C^{\an}$ along $X_f^{\an}\to C^{\an}$, and consider the morphism $X_{f,\Delta} \to \Delta$. Applying Lemma \ref{lemma:nori} to the fibre over $c'$, we see that the natural map $\pi_1(X_{f, c'}) \to \pi_1(X_{f, \Delta})$ is surjective. As $\pi_1(X_{f, c'}) \cong \mathbb{Z}/2\mathbb{Z}$, the map $\pi_1(X_{f, c'}) \to \pi_1(X_{f, \Delta})$ is hence either the zero map or an isomorphism. If it were an isomorphism, the K3 double cover of $X_{f, c'}$ would extend to a topological double cover $Y_{f, \Delta}$ of $X_{f, \Delta}$. Equipping $Y_{f, \Delta}$ with the unique complex-analytic structure making $Y_{f, \Delta} \to X_{f, \Delta}$ a local biholomorphism, we would see that the general fibres of $Y_{f, \Delta} \to \Delta$ are algebraic K3 surfaces.  The map $Y_{f, \Delta} \to X_{f, \Delta}$ would be an unramified double cover over an open neighbourhood of $c \in \Delta$. This would contradict Lemma \ref{lemma:k3cover_doesnt_extend}.
Thus, the map $\pi_1(X_{f, c'}) \to \pi_1(X_{f, \Delta})$ is the zero map, which implies that $\pi_1(X_{f, c'}) \to \pi_1(X_f)$ is the zero map as well. Again applying Lemma \ref{lemma:nori}, this time to the morphism $X_f \to C$ and its fibre $X_{f,c'}$, we conclude that $\pi_1(X_f) \to \pi_1(C)$ is an isomorphism, as desired. 

To prove $(ii)$, by the invariance of the \'etale fundamental group of a proper scheme over $K$ along extensions of algebraically closed fields \cite[Tag~0A49]{stacks-project} (or \cite[Corollaire~X.1.8]{SGA1}), we may assume that $K = \mathbb{C}$. Then the claim follows from $(i)$ and the fact that the \'etale fundamental group of $X_f$ is the pro-finite completion of $\pi_1$ (see \cite[Corollaire~XII.5.2]{SGA1}).
\end{proof}

\section{Extending a K3 double cover}\label{section:K3}

Recall that Lafon's Enriques surface $S$ over $\mathbb{Q}(t)$ is the minimal smooth projective model of the affine surface $U$ in $\mathbb{A}^4_{\mathbb{Q}(t)}$ defined by the equations 
\begin{equation*}
x^2 - tu^2 + t = ( t^2u^2 -t) y^2 \neq 0 \quad \text{and} \quad x^2 - 2tu^2 + t^{-1} = t( t^2u^2 - t) z^2 \neq 0.
\end{equation*}
Let $S'$ be the minimal smooth projective model of the affine surface defined by the single equation $w^2 =  t^2u^2 -t$ in $\mathbb{A}^1_U$, where $w$ is the new coordinate on $\mathbb{A}^1_U$. 
Note that $S'$ is a K3 surface and that the natural projection map $S'\to S$ is a finite \'etale cover of degree two. We will refer to $S'$ as \emph{Lafon's K3 surface (over $\mathbb{Q}(t)$}).

In this section, we will show that $S'\to S$ extends to a finite \'etale cover over the inf-multiple fibre of any regular proper model of $S$ over $\mathbb{P}^1_{\mathbb{Q}}$ at $t=0$.
By the already established properties of Lafon's Enriques surface (Theorem \ref{thm:main_insight}), this implies that the fibre above $t=0$ of any integral regular proper model of the K3 surface $S'$ over $\mathbb{P}^1_{\mathbb{Q}}$ is inf-multiple and non-divisible of inf-multiplicity two. (The existence of such K3 degenerations was not known previously, as we already mentioned in the introduction.) To put this in perspective, recall that no K3 double cover of $S$ extends to a finite \'etale cover over the singular fibre at $t=-\alpha$ with $\alpha$ the golden ratio (and this is what the proof of non-isotriviality given in Theorem~\ref{thm:lafon_is_non_isotrivial} is based on).
Thus, it may seem a bit surprising that there is such a finite \'etale extension at $t=0$ for $S'\to S$.
 
We start with verifying that for any finite extension $K$ of $\mathbb{C}\laurent{t}$, the $K$-rational points of $S$ lift to $S'$.
 
\begin{lemma}\label{lemma:K3_calculation}
For every integer $n\geq 1$ and every $u,x,y,z\in \mathbb{C}\laurent{t^{1/n}}$ such that
\begin{equation}\label{equation:lafon} \tag{$**$}
x^2 - tu^2 + t = ( t^2u^2 -t) y^2 \neq 0 \quad \text{and} \quad x^2 - 2tu^2 + t^{-1} = t( t^2u^2 - t) z^2 \neq 0,
\end{equation}
the element $t^2u^2-t$ is a square in $\mathbb{C}\laurent{t^{1/n}}$. 
\end{lemma}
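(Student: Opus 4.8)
The plan is to work throughout in the complete discretely valued field $K=\mathbb{C}\laurent{t^{1/n}}$, which upon setting $s=t^{1/n}$ is just $\mathbb{C}\laurent{s}$ equipped with its $s$-adic valuation $v$ (normalised so that $v(s)=1$, hence $v(t)=n$). The decisive first step is the elementary observation that, because the residue field $\mathbb{C}$ is algebraically closed, an element of $K^\times$ is a square if and only if its valuation is even: the leading coefficient always has a square root in $\mathbb{C}$, and a $1$-unit has a square root by Hensel's lemma. Thus the whole statement reduces to showing that $v(P)$ is even, where $P:=u^2t^2-t$.

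Next I would extract what the two defining equations say at the level of valuations. Writing $A:=x^2-tu^2+t$ and $B:=x^2-2tu^2+t^{-1}$ for the two left-hand sides, the equations assert $A=Py^2\neq 0$ and $B=tPz^2\neq 0$; in particular $P,y,z$ are all nonzero, and reading off valuations yields the two congruences $v(A)\equiv v(P)$ and $v(B)\equiv n+v(P)\pmod 2$. These are the only two handles on $v(P)$, and the strategy is to compute the parity of one of $v(A)$, $v(B)$ directly from the explicit expressions, choosing in each situation whichever of $A$, $B$ has a single term of strictly smallest valuation, so that no cancellation can occur.

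I would then split into cases according to the sign of $v(tu^2)=n+2v(u)$. If $v(tu^2)<0$, one argues directly from the factorisation $P=t(tu^2-1)$: here $tu^2-1$ has valuation $v(tu^2)$, so $v(P)=n+v(tu^2)=2n+2v(u)$ is even, with nothing further to check. If $v(tu^2)\geq 0$, I would further distinguish on the sign of $v(x)$. When $v(x)<0$ the term $x^2$ is the unique term of least valuation in $A$, whence $v(A)=2v(x)$ is even and $v(P)\equiv v(A)\equiv 0$. When $v(x)\geq 0$ the term $t^{-1}$, of valuation $-n$, is the unique term of least valuation in $B$, whence $v(B)=-n$ and $v(P)\equiv v(B)-n\equiv -2n\equiv 0$. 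In every case $v(P)$ is even, so $P$ is a square.

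The only real obstacle is the possibility of cancellation among terms of equal valuation: when $x$ and $tu^2$ are both units the leading terms of $A$ could conspire to vanish, and likewise $x^2$ could cancel $t^{-1}$ in $B$ precisely when $v(x)=-n/2$. The point that makes the argument go through cleanly is that these two degenerate configurations demand different valuations of $x$, so at least one of $A$, $B$ always has an unambiguous dominant term: the pole $t^{-1}$ in $B$ is strictly dominant as soon as $x$ and $tu^2$ are integral, while $x^2$ in $A$ is strictly dominant as soon as $x$ has a pole. Selecting the right equation in each case sidesteps the cancellation entirely, so no delicate leading-coefficient bookkeeping is needed.
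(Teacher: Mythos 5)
Your proof is correct, and it rests on exactly the same two ingredients as the paper's: that an element of $\mathbb{C}\laurent{t^{1/n}}$ is a square if and only if its valuation is even (leading coefficient plus Hensel), and that a sum with a unique term of strictly smallest valuation has that term's valuation. The difference is organizational, and yours is the more economical version. The paper argues by contradiction (if $u^2t^2-t$ is not a square, then neither $x^2-tu^2+t$ nor $t(x^2-2tu^2+t^{-1})$ is) and then runs through eight cases indexed by fine conditions on $\nu(u)$ and $\nu(x)$, in each case exhibiting a dominant term of even valuation. You instead reduce directly to the parity of $v(u^2t^2-t)$ via the two congruences $v(A)\equiv v(P)$ and $v(B)\equiv n+v(P)\pmod 2$, and need only three cases: your first case (where $tu^2$ has negative valuation, so $v(P)$ is even by inspection) coincides with the paper's first case, while your observation that once $tu^2$ and $x$ are both integral the pole $t^{-1}$ in $B$ is automatically the strict dominant term collapses the paper's remaining seven cases into two. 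This sidesteps all the sub-cases the paper needs to rule out cancellation between $x^2$ and $tu^2$, which is a genuine streamlining of the same underlying method.
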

\begin{proof}
Assume that there exists an integer $n\geq 1$ and elements $u,x,y,z$ of $\mathbb{C}\laurent{t^{1/n}}$ satisfying the two equations (\ref{equation:lafon}) with $t^2u^2-t$ not a square in $\mathbb{C}\laurent{t^{1/n}}$. Then, the first equation implies that $x^2-tu^2 +t$ is not a square (in $\mathbb{C}\laurent{t^{1/n}}$), and similarly the second equation implies that $x^2 t - 2t^2 u^2 + 1$ is not a square either. We now do a case-by-case analysis. Let $\nu$ be the $t$-adic valuation on $\mathbb{C}\laurent{t^{1/n}}$. 
\begin{itemize}
\item Assume $\nu(u) < -\frac{1}{2}$. Then, $\nu(t^2u^2) < \nu(t)$, so that $\nu(t^2 u^2 - t) = \nu(t^2 u^2)$ is divisible by two in $\frac{1}{n} \mathbb{Z}$, contradicting the fact that $t^2 u^2 - t$ is not a square.
\item Assume $\nu(u) = -\frac{1}{2}$ and $\nu(x) >0$. Then $-tu^2 $ is the unique lowest valuation term in $x^2-tu^2 +t$. Since $\nu(tu^2) =0$, this implies that $x^2-tu^2 +t$ is a square. Contradiction. 
\item Assume $\nu(u) = -\frac{1}{2}$ and $\nu(x) =0$. Then $1$ is the unique lowest valuation term in $x^2t - 2t^2 u^2 +1 $, which implies the latter is a square. Contradiction.
\item Assume $\nu(u) = -\frac{1}{2}$ and $\nu(x) <0$. Then $x^2$ is the unique lowest valuation term in $x^2-tu^2+t$, so that the latter is a square. Contradiction.
\item Assume $-\frac{1}{2} < \nu(u)<0$ and $\nu(x^2) < \nu(tu^2)$. Since $\nu(tu^2) <1$, it follows that $x^2$ is the unique lowest valuation term in $x^2 -tu^2 + t$. This implies the latter is a square. Contradiction.
\item Assume $-\frac{1}{2} < \nu(u)<0$ and $\nu(x^2) \geq \nu(tu^2)$. Since $\nu(tu^2)>0$, it follows that $1$ is the unique lowest valuation term in $x^2t - 2t^2u^2 +1$. This implies the latter is a square. Contradiction.
\item Assume $\nu(u)\geq 0$ and $\nu(x^2 t)\leq 0$. Then $x^2$ is the unique lowest valuation term in $x^2 - tu^2 + t$. This implies the latter is a square. Contradiction. 
\item Assume $\nu(u) \geq 0$ and $\nu(x^2 t) >0$. Then $1$ is the unique lowest valuation term in $x^2t - 2t^2 u^2 +1$. This implies the latter is a square. Contradiction.
\end{itemize} 
This concludes the proof.
\end{proof} 

Now that we have shown that local points lift, we can deduce that $S'\to S$ extends to a finite \'etale covering using the following lemma. 
 
\begin{lemma} \label{lemma:etale_follows_from_lifiting}
Let $R = \mathbb{C} \powerseries{t}$ and $K = \mathbb{C}\laurent{t}$, and let $\mathcal{X} \to \Spec R$ be a proper flat morphism with geometrically connected fibres, where $\mathcal{X}$ is an integral regular scheme. Let $\mathcal{Y} \to \mathcal{X}$ be a finite surjective morphism with $\mathcal{Y}$ an integral normal scheme. Assume that the fibres of $\mathcal{Y} \to \Spec R$ are geometrically connected. Assume $\mathcal{Y}_K \to \mathcal{X}_K$ is finite \'etale Galois. Then the following are equivalent.
\begin{enumerate}
\item For every finite extension $L/K$, the map $\mathcal{Y}(L) \to \mathcal{X}(L)$ is surjective.
\item There is a dense open $U \subseteq \mathcal{X}_K$ such that, for every finite extension $L/K$, the image of $\mathcal{Y}(L)\to \mathcal{X}(L)$ contains $U(L)$. 
\item The morphism $\mathcal{Y}\to \mathcal{X}$ is \'etale.
\end{enumerate}
\end{lemma}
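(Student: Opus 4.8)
The plan is to prove the cyclic chain $(iii)\Rightarrow(i)\Rightarrow(ii)\Rightarrow(iii)$. Two of these are essentially formal. For $(i)\Rightarrow(ii)$ one simply takes $U=\mathcal{X}_K$. For $(iii)\Rightarrow(i)$, let $L/K$ be finite and $x\in\mathcal{X}(L)$; since $\mathcal{O}_L$ is a valuation ring dominating $R$ and $\mathcal{X}\to\Spec R$ is proper, the valuative criterion extends $x$ uniquely to $\widetilde{x}\colon\Spec\mathcal{O}_L\to\mathcal{X}$. Pulling back the finite \'etale morphism $\mathcal{Y}\to\mathcal{X}$ along $\widetilde{x}$ gives a finite \'etale cover of $\Spec\mathcal{O}_L$; as $\mathcal{O}_L=\mathbb{C}\powerseries{s}$ (with $s$ a uniformizer) is strictly henselian with separably closed residue field $\mathbb{C}$, this cover is a disjoint union of copies of $\Spec\mathcal{O}_L$, and it is non-empty because $\mathcal{Y}\to\mathcal{X}$ is finite surjective. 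Any resulting section restricts on the generic fibre to a point of $\mathcal{Y}(L)$ lying over $x$, proving $(i)$.

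The heart of the matter is $(ii)\Rightarrow(iii)$, which I would prove by contraposition. Suppose $\mathcal{Y}\to\mathcal{X}$ is not \'etale. Since $\mathcal{X}$ is regular and $\mathcal{Y}$ is normal, Zariski--Nagata purity \cite[Theorem~XI.3.1]{SGA1} ensures that the branch locus is non-empty and pure of codimension one in $\mathcal{X}$. As $\mathcal{Y}_K\to\mathcal{X}_K$ is \'etale, the branch locus is disjoint from the generic fibre, hence contained in the special fibre $\mathcal{X}_0=V(t)$; being pure of codimension one, it must contain an irreducible component $F$ of $\mathcal{X}_0$. Let $\xi$ be the generic point of $F$ and $a_F=v_\xi(t)$ its multiplicity. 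Because all residue fields have characteristic zero, the ramification of $\mathcal{Y}\to\mathcal{X}$ along $F$ is tame, and since $\mathcal{Y}_K\to\mathcal{X}_K$ is Galois, the Galois group acts transitively on the points of $\mathcal{Y}$ over $\xi$; consequently every such point has the \emph{same} ramification index $e=e(F)>1$, so there is no unramified component of $\mathcal{Y}$ above $\xi$. This is precisely where the Galois hypothesis is essential.

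It then remains to produce a non-liftable local point lying over $U$, and this is the step I expect to be the main obstacle. Write $Z=\mathcal{X}_K\setminus U$ and let $\overline{Z}$ be its closure in $\mathcal{X}$; a dimension count gives $\dim(F\cap\overline{Z})<\dim F$, so a general closed ($\mathbb{C}$-)point $P\in F$ avoids $\overline{Z}$, lies on no other component of $\mathcal{X}_0$, is a regular point of the branch divisor, and has ramification index $e$. Near $P$ choose a regular system of parameters $x_1,\dots,x_d$ (with $d=\dim\mathcal{X}$) such that $F=V(x_1)$, so that $t=u\,x_1^{a_F}$ for a unit $u$. By Abhyankar's lemma, the formal completion $\mathcal{Y}\times_{\mathcal{X}}\Spec\widehat{\mathcal{O}}_{\mathcal{X},P}$ is a disjoint union of Kummer covers of the shape $w^{e}=x_1\cdot(\text{unit})$, all with the same $e>1$. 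I would then write down a transverse formal arc $\widehat{\mathcal{O}}_{\mathcal{X},P}\to\mathbb{C}\powerseries{s}=\mathcal{O}_L$, with $L=\mathbb{C}\laurent{t^{1/a_F}}$, sending $x_2,\dots,x_d$ to $0$ and $x_1$ to an element of valuation $1$ compatible with $t\mapsto s^{a_F}$ (solvable because $\mathbb{C}$ is algebraically closed of characteristic zero, so the required unit correction is an $a_F$-th power). The induced $L$-point $x$ of $\mathcal{X}$ lands in $U$---because its closed point specializes to $P\notin\overline{Z}$, its generic point $\zeta$ cannot lie in $Z$, as otherwise $P\in\overline{\{\zeta\}}\subseteq\overline{Z}$---yet $x$ does not lift: a lift would force $x_1$ to become an $e$-th power in $\mathbb{C}\powerseries{s}$, impossible since its valuation $1$ is not divisible by $e$. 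This contradicts $(ii)$ and closes the chain.

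Two points demand care. First, the genericity argument guaranteeing that the transverse arc genuinely represents a point of $U(L)$ rather than of the forbidden locus $Z$; this is exactly what the dimension estimate on $F\cap\overline{Z}$ delivers. Second, translating the formal, local $e$-th-power obstruction read off from Abhyankar's lemma into the honest statement $x\notin\operatorname{image}(\mathcal{Y}(L)\to\mathcal{X}(L))$: here one again invokes properness of $\mathcal{Y}/R$ and strict henselianity of $\mathcal{O}_L$, so that $\widetilde{x}^{*}\mathcal{Y}$ admits a section over $\mathcal{O}_L$ if and only if the relevant Kummer equation is solvable there, which it is not.
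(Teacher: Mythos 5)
Your proposal is correct, and the two easy implications are handled just as in the paper: $(i)\Rightarrow(ii)$ is trivial, and your strictly-henselian argument for $(iii)\Rightarrow(i)$ is exactly the content of the paper's Lemma \ref{lemma:nonliftability_extendability} applied to $B=\Spec \mathcal{O}_L$. For the main implication $(ii)\Rightarrow(iii)$, however, your route differs from the paper's in its machinery, even though both rest on the same three pillars (purity, Galois transitivity, and an arc through a general point of a component of $\mathcal{X}_0$ whose generic point lands in $U$). The paper argues directly: writing $\mathcal{X}_0=\sum m_iX_i$ and $\mathcal{Y}_0=\sum n_{i,j}Y_{i,j}$, it reduces \'etaleness (implicitly via purity and characteristic zero) to the equalities $m_i=n_{i,j}$, cites \cite[\S 9.1, Corollary~9]{BLR} to produce an $R[t^{1/m_i}]$-arc $B$ through a general point of $X_i$ with generic point in $U$, lifts it to $\mathcal{Y}$ using $(ii)$ and properness, and reads off the intersection-number identity $m_i=(\mathcal{Y}_0\cdot B)=n_{i,j_0}(Y_{i,j_0}\cdot B)$; this gives $n_{i,j_0}\mid m_i$, hence equality since ramification indices force $m_i\mid n_{i,j}$, and the Galois hypothesis spreads the equality to all $j$. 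You instead argue by contraposition: Zariski--Nagata purity (which the paper leaves implicit) produces a ramified component $F$, Galois transitivity guarantees \emph{every} point over its generic point is ramified with the same index $e>1$, and you build the transverse arc by hand (Cohen structure theorem plus Hensel's lemma to normalize $t=u\,x_1^{a_F}$ into $t\mapsto s^{a_F}$) and detect the non-liftability through the Kummer local form $w^e=x_1\cdot(\mathrm{unit})$ furnished by Abhyankar's lemma, rather than through intersection numbers. Your dimension count $\dim(F\cap\overline{Z})<\dim F$ (valid because each component of $\overline{Z}$ dominates $\Spec R$, hence is $R$-flat with special fibre of dimension at most $\dim Z<\dim F$) correctly replaces the paper's appeal to BLR for arcs with generic point in $U$. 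The trade-off: the paper's proof is shorter given the BLR citation and avoids the structure theory of tame covers, while yours is more self-contained, makes the precise role of the Galois hypothesis (excluding an unramified component through which points could lift) explicit, and pinpoints the valuation-theoretic source of the obstruction.
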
 
\begin{proof}  
Note that $(i) \implies (ii)$ trivially and that $(iii) \implies (i)$ follows from Lemma \ref{lemma:nonliftability_extendability}. Thus, it remains to prove $(ii) \implies (iii)$.

Let $\mathcal{X}_0$ be the special fibre of $\mathcal{X}\to \Spec R$ and write $\mathcal{X}_0 = \sum m_i X_i$, where the $m_i$ are positive integers and the $X_i$ are the irreducible components of $\mathcal{X}_0$. Similarly, write $\mathcal{Y}_0 = \sum n_{i,j} Y_{i,j}$, where the $n_{i,j}$ are positive integers and the $Y_{i,j}$ are the irreducible components of $\mathcal{Y}_0$ indexed such that $Y_{i,j}$ lies over $X_i$. To prove that $\mathcal{Y}\to \mathcal{X}$ is \'etale, it suffices to show that $m_i = n_{i,j}$ for all $i$ and $j$.

For each $i$, let $X_i^o$ be a dense open subset of $X_i$ such that $X_i^o$ meets no other irreducible components of $\mathcal{X}_0$, the subscheme $(X_i^o)_{\red}$ is regular, the morphism $\mathcal{Y}_{0,\red} \to \mathcal{X}_{0,\red}$ is \'etale over $(X_i^o)_{\red}$, and $\mathcal{Y}$ is regular above $X_i^o$. Let $x$ be any point in $X_i^o$ and choose an $R$-morphism $\Spec R[t^{1/m_i}] \to \mathcal{X}$ through $x$ such that the generic point of $\Spec R[t^{1/m_i}]$ maps to a point $\eta_x \in U$ (note that the existence of such a point follows from \cite[\S 9.1, Corollary~9]{BLR}). By assumption, there is an $\eta_y \in \mathcal{Y}(K[t^{1/m_i}])$ lying over $\eta_x$. Thus, as $\mathcal{Y} \to \Spec R$ is proper, we obtain a lift $\Spec R[t^{1/m_i}] \to \mathcal{Y}$ through $\eta_y$. Let $y \in \mathcal{Y}_0$ denote the image of the closed point of $\Spec R[t^{1/m_i}]$. As the morphism $\mathcal{Y}_{0,\red} \to \mathcal{X}_{0,\red}$ is \'etale over $x_{\red}$ and $x_{\red}$ is a regular point of $\mathcal{X}_{0,\red}$, the point $y_{\red}$ is a regular point of $\mathcal{Y}_{0,\red}$. In particular, it lies on a unique irreducible component $Y_{i,j_0}$. Moreover, as we assumed $\mathcal{Y}$ to be regular over $X_i^o$ and regularity is an open condition, the scheme $\mathcal{Y}$ is regular in a neighbourhood of $y_{\red}$. Let $B \subseteq \mathcal{Y}$ denote the image of $\Spec R[t^{1/m_i}]$ in $\mathcal{Y}$. Then we have the following equality of intersection numbers:
\[ m_i = \deg(B / \mathbb{C}\powerseries{t}) = (\mathcal{Y}_0 \cdot B) = n_{i,j_0} (Y_{i,j_0} \cdot B)\rlap{.} \] 
Thus, $n_{i,j_0}$ divides $m_i$, and therefore is equal to it. As $\mathcal{Y}_K \to \mathcal{X}_K$ was assumed to be Galois, we see that $n_{i,j} = m_i$ for every $j$. As $x$ was chosen to lie on an arbitrary $X_i^o$, we see that $n_{i,j} = m_i$ for every $i$ and $j$, finishing the proof.
\end{proof}

\begin{theorem} 
Let $f\colon X\to \mathbb{P}^1_{\mathbb{Q}}$ be a morphism with $X$ a smooth projective threefold over $\mathbb{Q}$ such that $X_{\mathbb{Q}(t)}$ is isomorphic to Lafon's Enriques surface. Let $Y\to X$ be the normalization of $X$ in the function field of $S'$ (so that $Y_{\mathbb{Q}(t)}$ is Lafon's K3 surface).   Then the following statements hold. 
\begin{enumerate}
\item There is a dense open $U\subset \mathbb{P}^1_{\mathbb{Q}}$ containing $0$ such that $Y \times_{\mathbb{P}^1_{\mathbb{Q}}} U \to X \times_{\mathbb{P}^1_{\mathbb{Q}}} U$ is finite \'etale over $U$.  
\item Let $\widetilde{Y}\to Y$ be a  resolution of singularities which is an isomorphism over the regular locus of $Y$. Then, the morphism $\widetilde{Y} \to \mathbb{P}^1_{\mathbb{Q}}$ has no divisible fibres and its orbifold base is $(\mathbb{P}^1_{\mathbb{Q}}, \frac{1}{2} [0])$. (That is, the fibre over $t=0$ is the unique inf-multiple fibre of $\widetilde{Y}\to \mathbb{P}^1_{\mathbb{Q}}$, it is not divisible, and it has inf-multiplicity two.) 
\item The family of K3 surfaces $Y\to \mathbb{P}^1_{\mathbb{Q}}$ is non-isotrivial.
\item The threefold $\widetilde{Y}_{\mathbb{C}}$ is simply connected. 
\end{enumerate} 
\end{theorem}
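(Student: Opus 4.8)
The plan is to prove the four assertions in turn, with (i) as the main technical step and (ii)--(iv) following from it together with the local-point computations of the previous section. For (i), note first that the generic fibre of $Y\to X$ is the K3 double cover $S'\to S$, which is finite \'etale of degree two and Galois; since $Y\to X$ is finite, its non-\'etale locus $Z\subseteq X$ is closed and disjoint from the generic fibre, so to produce a dense open $U\ni 0$ it suffices to show $Z$ misses the fibre $X_0$. I would verify this after completing at $0$ and extending scalars to $\mathbb{C}$: set $R=\mathbb{C}\powerseries{t}$, $K=\mathbb{C}\laurent{t}$, $\mathcal{X}=X_{\mathbb{C}}\times_{\mathbb{P}^1_{\mathbb{C}}}\Spec R$ (which is integral and regular since $X$ is smooth), and let $\mathcal{Y}$ be the corresponding model of $Y$, i.e.\ the normalization of $\mathcal{X}$ in the function field of $S'$. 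Then all hypotheses of Lemma~\ref{lemma:etale_follows_from_lifiting} hold: $\mathcal{X}$ is integral regular and proper flat over $R$ with geometrically connected fibres, $\mathcal{Y}$ is integral normal with geometrically connected fibres, and $\mathcal{Y}_K\to\mathcal{X}_K$ is the (Galois, degree two) K3 double cover. Its condition $(ii)$ is precisely Lemma~\ref{lemma:K3_calculation}: every finite extension of $K$ is $\mathbb{C}\laurent{t^{1/n}}$ for some $n$, and over the affine open $U\subseteq\mathcal{X}_K$ cut out by Lafon's equations the quantity $u^2t^2-t$ is a square, so every such $L$-point of $U$ lifts to $S'$. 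Hence $\mathcal{Y}\to\mathcal{X}$ is \'etale, so $Z_{\mathbb{C}}$ misses $X_{0,\mathbb{C}}$, whence $Z\cap X_0=\emptyset$; as $f$ is proper, $f(Z)$ is closed and avoids $0$, and $U:=\mathbb{P}^1_{\mathbb{Q}}\setminus f(Z)$ works.

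For (ii), by Lemma~\ref{lemma:index} the inf- and gcd-multiplicities of each fibre are read off the generic fibre $S'$. At $t=0$: since $S'\to S$ and $S(\mathbb{C}\laurent{t})=\emptyset$, we get $S'(\mathbb{C}\laurent{t})=\emptyset$, while the $\mathbb{C}\laurent{\sqrt{t}}$-point $x=u=0,\ y=\sqrt{-1},\ z=\sqrt{-1}/(t\sqrt{t})$ of $S$ lifts to $S'$ because $u^2t^2-t=-t$ is a square in $\mathbb{C}\laurent{\sqrt{t}}$; thus $Y_0$ has inf-multiplicity two, and it is non-divisible since, $\mathcal{Y}\to\mathcal{X}$ being \'etale with $\mathcal{X}$ regular (so $\mathcal{Y}$ regular), the component multiplicities of $\mathcal{Y}_0$ coincide with those of the non-divisible fibre $X_0$. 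For $t_0\in\mathbb{P}^1(\mathbb{C})\setminus\{0\}$ I would lift the explicit local points of $S$ from Theorem~\ref{thm:main_insight}: for $u=x=0$ one has $u^2t^2-t=-t$, a square in $\mathbb{C}\laurent{t-t_0}$ by Hensel since $t_0\neq 0$, and at $\infty$ the point $u=x=1$ gives $u^2t^2-t=t(t-1)$, a square in $\mathbb{C}\laurent{t^{-1}}$; hence $S'(\mathbb{C}\laurent{t-t_0})\neq\emptyset$ and, by Lemma~\ref{lemma:local_points}, $Y_{t_0}$ has a reduced component. This gives orbifold base $(\mathbb{P}^1_{\mathbb{Q}},\tfrac12[0])$ and no divisible fibres.

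For (iii), it suffices to show $S'$ is non-isotrivial over $\mathbb{Q}(t)$. If it were isotrivial, then over some $K_n=\mathbb{C}\laurent{(t+\alpha)^{1/n}}$ (with $\alpha$ the golden ratio) we would have $S'_{K_n}\cong S'_0\otimes_{\mathbb{C}}K_n$ for a K3 surface $S'_0$ over $\mathbb{C}$. The Enriques involution $\iota$ on $S'$ is defined over $\mathbb{Q}(t)$, hence over $K_n$; since the automorphism scheme of a K3 surface is \'etale (K3 automorphisms are rigid, $H^0(T_{S'_0})=0$), $\iota$ is the base change of a fixed-point free involution $\iota_0$ of $S'_0$, and therefore $S_{K_n}\cong(S'_0/\iota_0)\otimes_{\mathbb{C}}K_n$ is isotrivial, contradicting Theorem~\ref{thm:lafon_is_non_isotrivial}.

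For (iv), choose a resolution $\widetilde{Y}\to Y_{\mathbb{C}}$; as $Y_{\mathbb{C}}$ is normal the map has connected fibres and induces a surjection $\pi_1(\widetilde{Y})\twoheadrightarrow\pi_1(Y_{\mathbb{C}})$, so it suffices to prove $\pi_1(\widetilde{Y})=1$. The composite $\widetilde{Y}\to\mathbb{P}^1_{\mathbb{C}}$ is a fibration whose general fibre is a smooth K3 surface and which, by birational invariance of inf- and gcd-multiplicities (Remark~\ref{remark:inf_multiplicity} and Lemma~\ref{lemma:index}) together with (ii), has no divisible fibres in codimension one. Applying Lemma~\ref{lemma:nori} over a point with smooth fibre yields the exact sequence $\pi_1(\mathrm{K3})\to\pi_1(\widetilde{Y})\to\pi_1(\mathbb{P}^1_{\mathbb{C}})\to 1$; since K3 surfaces are simply connected and $\pi_1(\mathbb{P}^1_{\mathbb{C}})=1$, we conclude $\pi_1(\widetilde{Y})=1$, hence $\pi_1(Y_{\mathbb{C}})=1$. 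The crux of the whole argument is (i): arranging the regular model $\mathcal{X}$ and its normalization $\mathcal{Y}$ over $\mathbb{C}\powerseries{t}$ so that Lemma~\ref{lemma:etale_follows_from_lifiting} applies, and recognizing that the uniform lifting of local points from Lemma~\ref{lemma:K3_calculation} is exactly its hypothesis $(ii)$; the descent of the Enriques involution via rigidity of K3 automorphisms in (iii) is the other delicate point.
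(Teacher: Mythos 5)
Your parts (i) and (ii) follow the paper's proof essentially verbatim: the paper likewise sets $\mathcal{X}=X\times_{\mathbb{P}^1_{\mathbb{Q}}}\Spec\mathbb{C}\powerseries{t}$ and $\mathcal{Y}=Y\times_{\mathbb{P}^1_{\mathbb{Q}}}\Spec\mathbb{C}\powerseries{t}$ (which agrees with your normalization description, since normalization commutes with this regular base change), applies Lemma~\ref{lemma:etale_follows_from_lifiting} with its hypothesis (ii) supplied by Lemma~\ref{lemma:K3_calculation}, and for part (ii) lifts the explicit local points from the proof of Theorem~\ref{thm:main_insight} (with $w=\sqrt{-t}$, resp.\ $w=\sqrt{t^2-t}$) and invokes Lemma~\ref{lemma:local_points} together with the fact that the \'etale cover $\mathcal{Y}_0\to\mathcal{X}_0$ preserves component multiplicities. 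Your (iv) is the paper's argument done slightly more carefully: the paper applies Lemma~\ref{lemma:nori} directly to $Y_{\mathbb{C}}\to\mathbb{P}^1_{\mathbb{C}}$ even though $Y_{\mathbb{C}}$ may be singular, whereas you first pass to a resolution $\widetilde{Y}$ and use the surjection $\pi_1(\widetilde{Y})\to\pi_1(Y_{\mathbb{C}})$; that extra step is a genuine improvement in rigor.

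Part (iii) is where you diverge from the paper. The paper deduces non-isotriviality of $S'$ from that of $S$ via Lemma~\ref{lemma:K3_quotients}, i.e.\ via Ohashi's finiteness theorem for Enriques quotients combined with the Gonz\'alez-Diez criterion; you instead descend the Enriques involution through the isotrivializing isomorphism using rigidity of K3 automorphisms ($H^0(T_{S_0'})=0$, so the automorphism scheme is \'etale). This rigidity argument is correct and replaces a heavy external input by a soft one. One caveat: because you pass to the completion, what you actually derive is that $S_{K_n}\cong(S_0'/\iota_0)\otimes_{\mathbb{C}}K_n$ is \emph{constant over} $K_n$, i.e.\ definable over $\mathbb{C}$; this does not formally contradict the \emph{statement} of Theorem~\ref{thm:lafon_is_non_isotrivial}, which only asserts that $S_{\overline{\mathbb{Q}(t)}}$ is not definable over $\overline{\mathbb{Q}}$ --- getting from ``constant over $K_n$'' back to ``definable over $\overline{\mathbb{Q}}$'' is exactly the nontrivial descent that the paper's Lemma~\ref{lemma:K3_quotients} is there to handle. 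The gap is easy to close in either of two ways: draw the contradiction against Lemma~\ref{lemma:k3cover_doesnt_extend} instead (constancy over $K_n$ yields the smooth model $(S_0'/\iota_0)\times\Spec\mathbb{C}\powerseries{(t+\alpha)^{1/n}}$, over which the K3 double cover extends), which is literally how the proof of Theorem~\ref{thm:lafon_is_non_isotrivial} concludes; or, cleaner, run your rigidity argument over $\overline{\mathbb{Q}(t)}$ rather than over $K_n$: write $S'_{\overline{\mathbb{Q}(t)}}\cong V\otimes_{\overline{\mathbb{Q}}}\overline{\mathbb{Q}(t)}$ with $V$ a K3 over $\overline{\mathbb{Q}}$, descend $\iota$ to $V$ by rigidity, and conclude that $S_{\overline{\mathbb{Q}(t)}}$ is defined over $\overline{\mathbb{Q}}$, contradicting the theorem exactly as stated and avoiding completions altogether.
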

\begin{proof}
Consider the morphisms $\Spec \mathbb{C}\powerseries{t} \to \Spec \mathbb{Q}[t]_{(t)}\to \mathbb{P}^1_{\mathbb{Q}}$. Define $\mathcal{Y} = Y\times_{\mathbb{P}^1_{\mathbb{Q}}} \mathbb{C}\powerseries{t}$ and $\mathcal{X}=X\times_{\mathbb{P}^1_{\mathbb{Q}}} \mathbb{C}\powerseries{t}$. By standard descent arguments, to prove $(i)$, it suffices to show that $\mathcal{Y}\to \mathcal{X}$ is \'etale. To prove this, by Lemma \ref{lemma:etale_follows_from_lifiting}, it suffices to show that there is a dense open $U\subset \mathcal{X}_K$  such that, for every finite extension $L/\mathbb{C}\laurent{t}$, the image of $\mathcal{Y}(L)\to \mathcal{X}(L)$ contains $U(L)$. The latter is verified in Lemma \ref{lemma:K3_calculation}. This concludes the proof of $(i)$. 
 
To prove $(ii)$, we use that the points found in the proof of Theorem \ref{thm:main_insight} lift to the K3 double cover.
Namely, 
\[ x=0, u=0, y=\sqrt{-1}, z= \frac{\sqrt{-1}}{t\sqrt{t}}, w= \sqrt{-t} \]
defines a $\mathbb{Q}(\sqrt{-1})(\sqrt{t})$-point of $S'$ and, similarly, 
\[ x = 1, u = 1, y = \frac{1}{t \sqrt{1-t^{-1}}}, z = \frac{1}{t}\sqrt{\frac{-2+t^{-1}+t^{-2}}{1-t^{-1}}}, w= \sqrt{t^2-t}\]
defines a local point of $S'_{\mathbb{C}(t)}$ at $t=\infty$. It follows from Lemma \ref{lemma:local_points}   that $\widetilde{Y}_{t_0}$ has a reduced component for every $t_0 \in \mathbb{P}^1(\mathbb{C}) \setminus \{0\}$. This shows that $\widetilde{Y}\to \mathbb{P}^1_{\mathbb{Q}}$ has at most one inf-multiple fibre. Furthermore,  by $(i)$, the fibre $\widetilde{Y}_0 = Y_0$ is a finite \'etale cover of the inf-multiple non-divisible fibre $X_0$ with inf-multiplicity two. Therefore, $\widetilde{Y}_0$ is an inf-multiple non-divisible fibre with inf-multiplicity two as well.   (Alternatively,  note that the quadratic point and cubic point on $S_{\mathbb{C}\laurent{t}}$ constructed in the proof of Theorem \ref{thm:main_insight} lift to $S'_{\mathbb{C}\laurent{t}}$.)
 
To prove $(iii)$, suppose that $S'$ were isotrivial. Then $S'_{\overline{\mathbb{Q}(t)}}$ can be defined over $\overline{\mathbb{Q}}$. In particular, every Enriques quotient (such as $S_{\overline{\mathbb{Q}(t)}}$) can be defined over $\overline{\mathbb{Q}}$ by Lemma \ref{lemma:K3_quotients} below. 
This however contradicts the non-isotriviality of $S$ (Theorem~\ref{thm:lafon_is_non_isotrivial}). We conclude that $S'$ is non-isotrivial.

Since $\widetilde{Y}_{\mathbb{C}}\to \mathbb{P}^1_{\mathbb{C}}$ is a family of K3 surfaces with no divisible fibres, it follows immediately from Lemma~\ref{lemma:nori} that $\widetilde{Y}_{\mathbb{C}}$ is simply connected.   This establishes $(iv)$ and concludes the proof.  
\end{proof}

\begin{lemma} \label{lemma:K3_quotients}
Let $L/k$ be an extension of algebraically closed fields of characteristic zero. Let $V$ be a K3 surface over $k$. Then, every Enriques quotient of $V_L$ can be defined over $k$.
\end{lemma}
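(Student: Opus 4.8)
The plan is to reduce the statement to the fact that the automorphisms of $V$ do not grow when one passes from $k$ to the larger algebraically closed field $L$. Recall first that an Enriques quotient of $V_L$ is the same datum as a fixed-point-free involution $\iota$ on $V_L$: the quotient map $V_L \to S$ exhibits $V_L$ as the K3 double cover of the Enriques surface $S$, and $S = V_L/\iota$ for the associated deck involution. So it suffices to show that every such $\iota$ is the base change of a fixed-point-free involution $\iota_0$ on $V$; then $S_0 := V/\iota_0$ is an Enriques surface over $k$ with $(S_0)_L \cong S$, since the formation of a free quotient (an étale $\mathbb{Z}/2\mathbb{Z}$-torsor) commutes with the base change $\Spec L \to \Spec k$.

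To descend $\iota$, I would invoke the automorphism group scheme $G := \Aut_{V/k}$, which is representable by a group scheme locally of finite type over $k$ since $V$ is projective. As $\characteristic k = 0$, Cartier's theorem ensures $G$ is smooth over $k$; its tangent space at the identity is $\mathrm{H}^0(V, T_V)$, which vanishes for a K3 surface (because $T_V \cong \Omega^1_V$ and $h^{1,0}(V) = 0$). Hence $G$ is smooth of dimension zero, that is, étale over $k$; as $k$ is algebraically closed, $G$ is a constant group scheme, and therefore the base-change map $\Aut(V) = G(k) \to G(L) = \Aut(V_L)$ is an isomorphism of groups. This is the heart of the argument.

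Consequently $\iota$ descends to an element $\iota_0 \in \Aut(V)$, which is again an involution because the displayed identification respects the group law. It then remains to check that $\iota_0$ is fixed-point-free: its fixed locus $V^{\iota_0}$ is a closed subscheme of $V$ whose base change to $L$ is the (empty) fixed locus $(V_L)^{\iota}$, so $V^{\iota_0} = \emptyset$ as well, a nonempty $k$-scheme having nonempty base change. Thus $\iota_0$ is a fixed-point-free involution, $V/\iota_0$ is an Enriques surface over $k$, and base changing the free quotient $V \to V/\iota_0$ along $k \to L$ recovers $V_L \to S$, proving that the Enriques quotient $S$ is defined over $k$.

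The only genuinely non-formal input is the étaleness of $G$, equivalently the vanishing $\mathrm{H}^0(V, T_V) = 0$; everything else is bookkeeping about base change of free quotients and of fixed loci. I expect the step requiring the most care is confirming that the constancy of $G$ over the algebraically closed field $k$ really yields $G(k) = G(L)$ compatibly with composition, so that involutions are sent to involutions and the quotient map itself descends.
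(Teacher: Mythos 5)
Your proof is correct, but it follows a genuinely different route from the paper's. The paper argues via fields of moduli: it reduces to the case $k$ countable, $L=\mathbb{C}$, invokes Ohashi's theorem that a complex K3 surface has only finitely many Enriques quotients up to isomorphism, concludes that the $\Aut_k(L)$-conjugates $W^\sigma$ of the given quotient $W$ fall into finitely many isomorphism classes, and then applies a descent criterion of Gonz\'alez-Diez to deduce that $W$ can be defined over $k$. You instead use rigidity of K3 surfaces: since $V$ is projective, $\Aut_{V/k}$ is a group scheme locally of finite type, smooth in characteristic zero by Cartier, with tangent space $\mathrm{H}^0(V,T_V)=\mathrm{H}^0(V,\Omega^1_V)=0$, hence \'etale and (over algebraically closed $k$) constant, so that $\Aut(V)\to\Aut(V_L)$ is an isomorphism of groups; the fixed-point-free involution defining the Enriques quotient therefore descends, and since formation of fixed loci and of free quotients commutes with the base change $\Spec L\to\Spec k$, so does the quotient. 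Your argument buys more: it avoids Ohashi's nontrivial finiteness theorem and the field-of-moduli machinery, and it yields the stronger statement that the covering map $V_L\to S$ itself, not merely the surface $S$, is defined over $k$ (indeed, that Enriques quotients of $V$ and of $V_L$ are in natural bijection). The paper's argument, on the other hand, is shorter given the cited results and illustrates a soft technique that applies whenever one has finiteness of conjugates, without needing representability or rigidity of the automorphism functor.
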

\begin{proof}
We may assume $k$ is countable and $L = \mathbb{C}$. Now, let $V_L\to W$ be an Enriques quotient of~$V_L$. 
Note that the set of isomorphism classes in the $\mathrm{Aut}_k(L)$-orbit of $W$ is finite, as a K3 surface over an algebraically closed field of characteristic zero has only finitely many Enriques quotients up to isomorphism (see \cite[Corollary~0.4]{Ohashi}). Thus, as $\sigma$ runs over the field automorphisms of $L$ trivial on $k$, the set of isomorphism classes among the conjugates $W^{\sigma}$ of $W$ is finite. In particular, it follows that $W$ can be defined over $k$ (see \cite[Criterion~1]{GonzalezDiez}), as required.   
\end{proof}

Note that Theorem \ref{thm1:K3_version} is a consequence of the following result, where we take $C=\mathbb{P}^1_{\mathbb{Q}}$ and $f\colon \mathbb{P}^1_{\mathbb{Q}} \to \mathbb{P}^1_{\mathbb{Q}}$ any degree $d$ morphism \'etale over $0$.

\begin{theorem}\label{thm2:K3_version}
Let $Y \to \mathbb{P}^1_{\mathbb{Q}}$ be a morphism with $Y$ a smooth projective threefold such that $Y_{\mathbb{Q}(t)}$ is isomorphic to Lafon's K3 surface. Let $C$ be a smooth projective curve over a field $K$ of characteristic zero. Let $f \colon C \to \mathbb{P}^1_K$ be a finite morphism which is \'etale over $0$. Let $Y_f$ be a desingularization of $Y \times_{\mathbb{P}^1_{\mathbb{Q}}} C$. Then, the following statements hold.
\begin{enumerate}
\item The threefold $Y_f$ is a smooth projective threefold, the generic fibre of $Y_f\to C$ is a K3 surface, the orbifold base of the morphism $Y_f\to C$ is $(C, \frac{1}{2} [f^{-1}(0)])$, and $Y_f \to C$ has no divisible fibres.
\item The threefold $Y_f$ is weakly special if and only if $C$ is of genus at most one.
\item The threefold $Y_f$ is special if and only if $C$ is of genus zero and $\deg f \leq 4$.
\end{enumerate}
\end{theorem}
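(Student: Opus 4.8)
The plan is to mirror the proof of Theorem~\ref{thm2} almost verbatim, substituting Lafon's K3 surface for Lafon's Enriques surface and importing the structural facts about $Y \to \mathbb{P}^1_{\mathbb{Q}}$ established in the preceding theorem in place of Theorem~\ref{thm:main_insight}. With those inputs in hand, the genuinely new content is confined to transporting the orbifold data along $f$ in $(i)$; the weakly special and special dichotomies in $(ii)$ and $(iii)$ are then formal and identical to the Enriques case.

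For $(i)$, I would first observe that the generic fibre of $Y_f \to C$ is the base change of Lafon's K3 surface along $\mathbb{Q}(t) \hookrightarrow K(C)$, and hence is again a K3 surface, so $Y_f$ is a smooth projective threefold fibred in K3 surfaces over $C$. To identify the orbifold base I would invoke the preceding theorem, which gives that $Y \to \mathbb{P}^1_{\mathbb{Q}}$ has orbifold base $(\mathbb{P}^1_{\mathbb{Q}}, \tfrac12[0])$, is inf-multiple of inf-multiplicity two over $0$, has a reduced component in every other fibre, and has no divisible fibres. Since $f$ is \'etale over $0$, for each of the $\deg f$ points $c \in f^{-1}(0)$ the fibre of $Y_f \to C$ remains inf-multiple of inf-multiplicity two, exactly as in Theorem~\ref{thm2}. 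For a point $c$ lying over some $t_0 \neq 0$, the reduced component of $Y_{t_0}$ provides a formal local section of $Y \to \mathbb{P}^1_{\mathbb{Q}}$ at $t_0$ (Lemma~\ref{lemma:local_points}), which pulls back to a formal local section of $Y_f \to C$ at $c$ no matter how $f$ ramifies there; hence, again by Lemma~\ref{lemma:local_points}, the fibre over $c$ has a reduced component and is of inf-multiplicity one. Finally, the absence of divisible fibres for $Y_f \to C$ follows from the absence of divisible fibres for $Y \to \mathbb{P}^1_{\mathbb{Q}}$ via the characterization of the gcd-multiplicity as the index of the generic fibre (Lemma~\ref{lemma:index}), since the index can only decrease under the finite field extensions induced by $f$. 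Together these give $\Delta_{Y_f\to C} = \tfrac12[f^{-1}(0)]$, proving $(i)$.

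For $(ii)$, one direction is immediate: if $g(C) \geq 2$ then $C$ is of general type and $Y_f$ itself dominates it, so $Y_f$ is not weakly special; thus weak specialness forces $g(C) \leq 1$. Conversely, after reducing to $K$ algebraically closed, when $g(C) \leq 1$ the curve $C$ is weakly special, the smooth fibres of $Y_f \to C$ are K3 surfaces and hence weakly special, and $Y_f \to C$ has no divisible fibres by $(i)$, so Theorem~\ref{thm:ws} yields that $Y_f$ is weakly special. For $(iii)$, the fibration $Y_f \to (C, \tfrac12[f^{-1}(0)])$ is a surjective orbifold morphism, so if $Y_f$ is special then its orbifold base cannot be of general type, i.e.\ $2g(C) - 2 + \tfrac12\deg f \leq 0$; since $\deg f \geq 1$ and $|f^{-1}(0)| = \deg f$ (as $f$ is \'etale over $0$), this forces $g(C) = 0$ and $\deg f \leq 4$. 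Conversely, if $g(C) = 0$ and $\deg f \leq 4$ then $(C, \tfrac12[f^{-1}(0)])$ is a special orbifold curve, and since $Y_f$ fibres in special (K3) surfaces over a special orbifold base, the argument of Theorem~\ref{thm2}$(iii)$ shows $Y_f$ is special.

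The main obstacle, and the only place where the argument diverges from the Enriques case, is the non-divisibility in $(i)$: for Enriques surfaces it followed at once from the index-one result of Proposition~\ref{prop:enriques_fibration}, whereas a K3 surface over $\mathbb{C}\laurent{t}$ has $\chi(\mathcal{O}) = 2$, so that shortcut is unavailable. I must therefore rely on the explicit non-divisibility of the fibre over $0$ proved in the preceding theorem (by lifting the cubic local point of $S$ to $S'$) and check carefully that both the inf-multiplicity and the index behave as claimed under the base change $f$, which is \'etale over $0$ but may ramify elsewhere.
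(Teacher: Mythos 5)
Your proposal is correct and takes essentially the same approach as the paper, whose entire proof of this theorem is the single line ``This is similar to the proof of Theorem \ref{thm2}'': you carry out exactly that transposition, importing the conclusions of the preceding theorem on $Y \to \mathbb{P}^1_{\mathbb{Q}}$ (orbifold base $(\mathbb{P}^1_{\mathbb{Q}},\tfrac12[0])$, no divisible fibres, reduced components away from $0$) in place of Theorem \ref{thm:main_insight}, and then repeating the arguments of Theorem \ref{thm2} for (i)--(iii). The only cosmetic inaccuracy is your parenthetical description of how the preceding theorem proves non-divisibility of $Y_0$ (it deduces it from $Y_0$ being a finite \'etale cover of the non-divisible fibre $X_0$, rather than by lifting the cubic point, though that lift does exist and gives an equally valid justification); this does not affect the correctness of your argument.
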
 
\begin{proof}
This is similar to the proof of Theorem \ref{thm2}.
\end{proof}

\bibliography{lafon}{}
\bibliographystyle{alpha}
\end{document}